\documentclass[12pt]{article}
\usepackage{amsfonts}
\usepackage{amsrefs}
\usepackage{amsmath}
\usepackage{amsfonts}
\usepackage{amsthm}
\usepackage{enumerate}
\usepackage{amsrefs}
\usepackage{geometry}
\geometry{a4paper}
\ifdefined\directlua
\usepackage{fontspec}
\usepackage{microtype}
\fi

\newcommand{\cP}{\mathcal P}
\newcommand{\cF}{\mathcal F}

\newcommand{\KK}{\mathbb K}

\newcommand{\cod}{\operatorname{cod}}
\newcommand{\Res}{\operatorname{Res}}

\newcommand{\cG}{\mathcal G}

\newcommand{\cS}{\mathcal S}
\newcommand{\cQ}{\mathcal Q}
\newcommand{\cL}{\mathcal L}

 \newcommand{\cC}{\mathcal C}

\newcommand{\Rad}{\mathrm{Rad}\,}

\newcommand{\PG}{\mathrm{PG}}
\newcommand{\GL}{\mathrm{GL}}

\newcommand{\rank}{\mathrm{rank}\,}
\newcommand{\ext}{\mathrm{Ext}}
\newcommand{\expa}{\mathrm{Exp}}
\newcommand{\dec}{\mathrm{Dec}}
\newcommand{\chr}{\mathrm{char}\,}

\newtheorem{theorem}{Theorem}[section]
\newtheorem{theo}{Theorem}
\newtheorem{lemma}[theorem]{Lemma}
\newtheorem{prop}[theorem]{Proposition}

\newtheorem{corollary}[theorem]{Corollary}

\makeatletter
\renewcommand\appendix{\par
  \setcounter{section}{0}%
  \setcounter{subsection}{0}%
  \gdef\thesection{\@Alph\c@section}
 \renewcommand\section{\@startsection {section}{1}{\z@}%
                                   {-3.5ex \@plus -1ex \@minus -.2ex}%
                                   {2.3ex \@plus.2ex}%
                                   {\normalfont\Large\bfseries Appendix }}
}
\makeatother

\theoremstyle{remark}
\newtheorem{remark}{Remark}

\author{Ilaria Cardinali, Luca Giuzzi}
\title{Geometries arising from trilinear forms on low-dimensional vector spaces}
\begin{document}
\maketitle

\begin{abstract}
  Let $\cG_k(V)$ be the $k$-Grassmannian of a vector space $V$ with
  $\dim V=n$.
 Given a hyperplane $H$ of $\cG_k(V)$, we define in~\cite{ILP17} a point-line subgeometry of $\PG(V)$ called the {\it geometry of poles of $H$}.   In the present paper, exploiting the classification of alternating
  trilinear forms in low dimension, we characterize the possible geometries of poles
  arising for $k=3$ and $n\leq 7$ and propose some new constructions.
  We also   extend a result of~\cite{DS10}
  regarding the existence of line spreads of $\PG(5,\KK)$ arising from hyperplanes of $\cG_3(V).$
\end{abstract}
\noindent
{\bfseries Keywords}: Grassmann Geometry; Hyperplanes; Multilinear forms.
\par\noindent
{\bfseries MSC}: 15A75; 14M15; 15A69.
\section{Introduction}\label{Introduction}
Denote by $V$ a $n$-dimensional vector space over a field $\KK$.
For any fixed $1\leq k < n$ the $k$-Grassmannian  ${\cG}_k(V)$ of $V$ is
the point-line geometry whose points are the $k$--dimensional vector subspaces of $V$ and whose lines are the sets $\ell_{Y,Z}:=\{X~\colon ~Y\subset X\subset Z, ~\dim X=k\}$ where $Y$ and $Z$ are subspaces of $V$
with $Y\subset Z,$ $\dim Y=k-1$ and $\dim Z=k+1$. Incidence is containment.

It is well known that the geometry $\cG_k(V)$ affords a full projective
embedding, called  Grassmann (or Pl\"ucker)
embedding and denoted by  $\varepsilon_k$, sending every $k$-subspace
$\langle v_1,\ldots,v_k\rangle$ of $V$ to the point $[v_1\wedge\cdots\wedge v_k]$
of $\PG(\bigwedge^kV)$, where we adopt the notation $[u]$ to refer
to the projective point represented by the vector $u$.
Also, for $X\subseteq V$ we put $[X]:=\{ [x] : x\in\langle X\rangle \}$
 for the points of the projective space induced by $\langle X\rangle$.

A hyperplane $H$ of $\cG_k(V)$ is a proper subspace of $\cG_k(V)$ such
that any line of $\cG_k(V)$ is either contained in $H$ or it intersects
$H$ in just one point.

It is well known, see Shult \cite{shult92} and also Havlicek \cite{H81}, Havlicek and Zanella \cite{HZ08}
and De Bruyn \cite{B09},
that the hyperplanes of $\cG_k(V)$ all arise from the Pl\"ucker embedding of $\cG_k(V)$
in $\PG(\bigwedge^kV)$, i.e. they bijectively correspond to proportionality
classes of non-zero linear functionals on $\bigwedge^kV$.
More in detail, for any hyperplane $H$ of $\cG_k(V)$ there is a non-null linear functional
$h$ on $\bigwedge^kV$ such that $H=\varepsilon_k^{-1}([\ker(h)]\cap \varepsilon_k(\cG_k(V)))$.
Equivalently, if $\chi_h:V\times\ldots\times V\rightarrow \KK$ is the alternating $k$-linear form on $V$ associated to the linear functional $h$, defined by the clause $\chi_h(v_1,\ldots, v_k) := h(v_1\wedge\cdots\wedge v_k)$, then the hyperplane $H$ is the set of the $k$-subspaces of $V$ where $\chi_h$ identically vanishes. So, the hyperplanes of $\cG_k(V)$ bijectively correspond also  to proportionality classes of non-trivial alternating $k$-linear forms of $V$.

In a recent paper \cite{ILP17} we introduced the notion of \emph{$i$-radical}
of a hyperplane $H$ of $\cG_k(V)$. In the present work we
shall just consider the case of the \emph{lower radical}  $R_{\downarrow}(H)$, for $i=1$,
and that of the \emph{upper radical} $R^{\uparrow}(H)$, for $i=k-1$.
The lower radical $R_{\downarrow}(H)$ of $H$ is the set of points
$[p]\in\PG(V)$ such that all $k$-spaces $X$ with $p\in X$ belong to $H$;
the upper radical $R^{\uparrow}(H)$ of $H$ is the set of $(k-1)$-subspaces
$Y$ of $V$ such that all $k$-spaces through $Y$ belong to $H$.

In the same paper~\cite{ILP17} we investigated the problem of determining
under which conditions the upper radical of a given hyperplane might be
empty. Working in the case $k=3$, we also defined a point-line subgeometry
$\cP(H)=(P(H),R^{\uparrow}(H))$ of $\PG(V)$ called \emph{the geometry of poles} of $H$, whose points
are called $H$-poles, a point $[p]\in P(H)$ and a line $\ell\in R^{\uparrow}(H)$ being incident
 when $p\in\ell$ (see Section~\ref{geom of poles}).
Some aspects of this geometry had already been studied,
under slightly different settings, in \cite{DS10,DS14}.
It has been shown in~\cite{DS10} that the set $P(H)$ is actually either
$\PG(V)$ or an algebraic hypersurface in $\PG(V)$; for more details see
Section~\ref{geom of poles}.

In this paper  we shall focus on the geometry $\cP(H)$, providing explicit equations for its points and lines and also a geometric  description in
the  cases where a complete
classification of trilinear forms on a vector space $V$ is available, namely $\dim(V)\leq6$ with $\KK$ arbitrary (see~\cite{Revoy79}) and $\dim(V)=7$ with $\KK$ a perfect field with cohomological dimension at most $1$ (see~\cite{CH88}).

We shall briefly recall the definition of  geometry of poles in the next section and we will state our main results in
 Section~\ref{main results}. The organization of the paper is outlined in Section~\ref{organization}.

\subsection{The geometry of poles}\label{geom of poles}
Assume $k=3$ and let $H$ be a given hyperplane of $\cG_3(V)$. For any  (possibly empty) projective space
$[X]$ we shall denote by $\dim(X)$ the vector  dimension of $X.$

Let $[p]$ be a point of $\PG(V)$ and consider the point-line geometry ${\cS}_p(H)$  having as points, the set of lines of $\PG(V)$ through $[p]$ and as lines, the set of planes $[\pi]$ of $\PG(V)$ through $[p]$ with
$\pi\in H$.
It is easy to see (see~\cite{ILP17}) that ${\cS}_p(H)$ is a polar space of symplectic type (possibly a trivial one). Let $R_p(H) := \Rad({\cS}_p(H))$ be the radical of ${\cS}_p(H)$ and put $\delta(p):=\dim(\Rad({\cS}_p(H))).$

We call $\delta(p)$ the \emph{degree} of $[p]$ (relative to $H$). If $\delta(p) = 0$ then we say that $[p]$ is \emph{smooth}, otherwise we call $[p]$ a \emph{pole} of $H$ or, also, a $H$-\emph{pole} for short. Clearly, a point is a pole if and only if it belongs to a line of the upper radical  $R^\uparrow(H)$ of $H$. So, $R^\uparrow(H) = \emptyset$ if and only if all points are smooth.

As the vector space underlying the symplectic polar space ${\cS}_p(H)$ has dimension $n-1$, $\delta(p)$ is even when $n$ is odd and it is odd if $n$ is even. In particular, when $n$ is even  all the points are poles of degree at least $1$. If they all have degree $1$, then we say that $H$ is \emph{spread-like}.

We shall provide in Theorem~\ref{ts} a direct geometric proof
of the result of \cite[Theorem 4]{ILP17}, stating
that for $n=6$ there are spread-like hyperplanes if and only if the field $\KK$
is not quadratically closed, extending a result of \cite{DS14}; this is also implicit in
 the classification of~\cite{Revoy79};
observe that for $n=8$ we prove
 in \cite{ILP17} that for  quasi-algebraically closed fields there are no spread-like hyperplanes.

More in general, when all points of $\PG(V)$ are poles of the same
degree $\delta$ and $(\delta+1)|n$, the set $\{\pi_p: [p]\in\PG(V) \}$ of all
subspaces $\pi_p:=\{ [u]\in [\ell] : p\in \ell \mbox{ and } \ell\in R^{\uparrow}(H) \}$
as $[p]$ varies in $\PG(V)$ might in some case possibly be
a spread of $\PG(V)$ in spaces of projective dimension $\delta$.
We are not currently aware of any case where this happens for $\delta>1$
and we propose this as
a problem which might be worthy of further investigation.

\subsection{Main results}\label{main results}
Before stating our main results we need to fix a terminology for linear functionals on $\bigwedge^3V$ and to recall what is currently known from the literature regarding their classification. As already pointed out, a classification of alternating trilinear forms of $V$ would determine a classification of the geometries of poles defined by hyperplanes of $\cG_3(V).$

We will first introduce the notion of isomorphism for hyperplanes and the notions of equivalence, near equivalence and geometrical equivalence for $k$-linear forms in general.

 We say that two hyperplanes $H$ and $H'$ of $\cG_k(V)$ are \emph{isomorphic}, and we write $H \cong H'$, when $H' = g(H) := \{g(X)\}_{X\in H}$ for some $g\in\GL(V)$, where $g(X)$ is the natural action of $g$ on the subspace $X$, i.e. $g(X)=\langle g(v_1),\dots, g(v_k)\rangle$ for $X=\langle v_1,\dots, v_k\rangle.$

  Recall that two alternating $k$-linear forms $\chi$ and $\chi'$ on $V$ are said to be \emph{(linearly) equivalent} when
\[\chi'(x_1,\ldots, x_k) ~ = ~ \chi(g(x_1),\ldots, g(x_k)), ~~~ \forall x_1,\ldots, x_k \in V\]
for some $g\in\GL(V)$. Accordingly, if $H$ and $H'$ are the hyperplanes associated to $\chi$ and $\chi'$, we have $H\cong H'$ if and only if $\chi'$ is proportional to a form equivalent to $\chi$. Note that if $\chi' = \lambda\cdot \chi$ for a scalar $\lambda \neq 0$ then $\chi$ and $\chi'$ are equivalent if and only if $\lambda$ is a $k$-th power in $\KK$.

We say that two forms $\chi$ and $\chi'$ are \emph{nearly equivalent}, and we write $\chi\sim \chi'$, when each of them is equivalent to a non-zero scalar multiple of the other. Hence  $H\cong H'$ if and only if $\chi\sim\chi'$.

We extend the above terminology to linear functionals of $\bigwedge^kV$ in a
natural way, saying that two linear functionals $h, h' \in (\bigwedge^kV)^*$ are \emph{nearly equivalent} and writing $h\sim h'$ when their corresponding $k$-alternating forms are nearly equivalent.

We say that two hyperplanes $H$ and $H'$ are \emph{geometrically equivalent} if the incidence graphs of their geometries of poles are isomorphic; the forms defining geometrically equivalent hyperplanes are called \emph{geometrically equivalent}  as well.

Note that nearly equivalent forms are always geometrically equivalent but the converse  does not hold in general.
For example, let $V$ be  a vector space over a field $\KK$ which is not quadratically
closed and suppose $\dim(V)=6$. To any quadratic extension of $\KK$ there correspond a
Desarguesian line-spread $\cS$ of $\PG(V)$, and the geometry $(\PG(V),\cS)$ is a geometry
of poles associated to a trilinear form. All hyperplanes inducing line-spreads are
geometrically equivalent.
If $\KK$ is a finite field or $\KK={\mathbb R}$, it is easy to see that the hyperplanes
inducing $\cS$ must also be isomorphic. However, this is not the case when $\KK={\mathbb Q}$
or when $\KK$ is a field of characteristic $2$ which is not perfect. In particular, in
the latter case hyperplanes arising from forms of type $T_{10,\lambda}^{(1)}$ and
$T_{10,\lambda}^{(2)}$, see Table~\ref{Tab F}, are geometrically equivalent but not
isomorphic.

Clearly, nearly equivalent or isomorphic hyperplanes are always  geometrically equivalent.

\subsubsection{Types for linear functionals of $\bigwedge^3V$}\label{prel k-linear alt}
Given a non-trivial linear functional $h\in(\bigwedge^3 V)^*$, let $\chi_h$ and $H_h$ be respectively the alternating trilinear form and the hyperplane of ${\cG}_3(V)$ associated to it. When no ambiguity might arise, we shall feel free to drop the subscript $h$ in our notation.

By definition, $R_{\downarrow}(H) = [\Rad(\chi)]$, where $\Rad(\chi)=\{v\in V\colon \chi(x,y,v)=0,\forall\, x,y\in V\}.$
Define the \emph{rank} of $h$ as $\rank(h) := \cod_V(\Rad(\chi))=\dim (V/\Rad(\chi))$.
Obviously,
functionals of different rank can never be nearly equivalent.

It is known that if $h$ is a non-trivial trilinear form, then $\rank(h) \geq 3$ and $\rank(h) \neq 4$ (see~\cite[Proposition 19]{ILP17} for the latter result).

Fix now a basis $E:=(e_i)_{i=1}^n$ of $V$. The dual basis of $E$ in $V^*$ is  $E^* := (e^i)_{i=1}^n$, where
$e^i\in V^*$ is the linear functional such that $e^i(e_j) = \delta_{i,j}$ (Kronecker symbol).
The set $(e^{i}\wedge e^{j}\wedge e^{k})_{1\leq i<j<k\leq n}$ is the basis of $(\bigwedge^3V)^*$ dual of the basis $(e_i\wedge e_j\wedge e_k)_{1\leq i < j < k\leq n}$ of $\bigwedge^3V$ canonically associated to $E$.
We shall adopt the convention of writing $\underline{ijk}$ for $e^i\wedge e^j\wedge e^k$, thus representing linear functionals of $\bigwedge^3V$ as linear combinations of symbols like $\underline{ijk}$.

In Table~\ref{Tab F}, see Appendix~\ref{appendix}, we list a number of possible types
of linear functionals of $\bigwedge^3V$ of rank at most $7$, denoted by the symbols
$T_1,\ldots, T_9$ and $T^{(1)}_{10,\lambda}$, $T^{(2)}_{10,\lambda}$, $T^{(1)}_{11,\lambda}$ and $T^{(2)}_{11,\lambda}$,
$T_{12,\mu}$ where $\lambda$ is a scalar subject to the conditions specified in the table.
Whenever $T$ is one of the types of Table~\ref{Tab F}, we say that $h\in (\bigwedge^3V)^*$ is of \emph{type} $T$ if $h$ is nearly equivalent to the linear functional described at row $T$ of Table~\ref{Tab F}. The \emph{type} of $H_h$ or $\chi_h$ is the type of $h$.
By definition, functionals of the same type are nearly equivalent.
Note that the definitions of each of these types make sense for any $n$ and for
any field $\KK$, provided that $n$ is not smaller than the rank of (a linear functional admitting) that description and that $\KK$ contains elements satisfying the special conditions there outlined.

In particular, Table~\ref{Tab F} provides a complete classification (up to
equivalence) in the case of perfect fields of cohomological dimension at most $1$ and $\dim(V)\leq 7$, see~\cite{CH88,Revoy79}. We recall that in~\cite{CH88} the authors also
determine the full automorphism group associated to each form.

\par

It is well known that a general classification of trilinear forms up to equivalence is hopeless; for instance
for $\KK={\mathbb C}$ and $n=9$ there are infinite families of linearly inequivalent trilinear forms; see e.g.
\cite{DS10}.
 \par
 When $n\leq6$, Revoy~\cite{Revoy79} proves that all trilinear forms, up to equivalence, are of type $T_i$, $1\leq i\leq 4$, $T_{10,\lambda}^{(1)}$, $T_{10,\lambda}^{(2)}$. In particular, if the field is quadratically closed, all forms of rank $6$ are either of type $T_3$ or of type $T_4$.
If the field is not quadratically closed, it is possible
 to distinguish two families of classes of
forms linearly equivalent among themselves according as
they are  equivalent to $T_3$ or to
$T_4$ over the quadratic closure $\KK^{\square}$ of $\KK$.
More in detail, if $\chr(\KK)$ is odd,  $T_{10,\lambda}^{(1)}=T_{10,\lambda}^{(2)}$ and each form of
a type in $T_{10,\lambda}^{(1)}$ is equivalent to $T_3$ over $\KK^{\square}$.
If $\chr(\KK)=2$, then the classes $T_{10,\lambda}^{(2)}$ and $T_{10,\lambda}^{(1)}$ are
in correspondence with respectively the separable and the inseparable quadratic extensions
of the field $\KK$; furthermore, any form of a type in $T_{10,\lambda}^{(2)}$ is equivalent
to a form of type $T_3$ in $\KK^{\square}$, while any form of type $T_{10,\lambda}^{(1)}$ is
equivalent in $\KK^{\square}$ to a form of type $T_4$.

If $n=7$ and $\KK$ is a perfect field of cohomological dimension at most $1$, Cohen and Helmick~\cite{CH88} show that all trilinear forms, up to equivalence, are of a type described in Table~\ref{Tab F}.

Under the conditions of Table~\ref{Tab F}, we shall provide a classification for the geometries of poles.

We now present our main results; for the notions of extension, expansion and block decomposition
as well as some of the notation, see Section~\ref{sec 3}.
By the symbol $|x,y|_{ij}$ we mean the $(i,j)$-Pl\"ucker coordinate of
the line $[x,y]$ spanned by the vectors $x=(x_i)_{i=1}^n$ and $y=(y_i)_{i=1}^n$ written in coordinates with respect to the basis $E$, i.e. $|x,y|_{ij}:=x_iy_j-x_jy_i$
is the $ij$-coordinate of $e_i\wedge e_j$ with respect to the basis $(e_i\wedge e_j)_{1\leq i<j\leq n}$.

\begin{theo}\label{main theorem 1}
  Suppose $\dim(V)\leq6$ and let $h$ be a non-trivial linear functional on $\bigwedge^3V$ having type as described in Table~\ref{Tab F}, with associated alternating trilinear form $\chi$. Denote by $H$ the
  hyperplane of $\cG_3(V)$ defined by $h$.
  Then one of the following occurs:
\begin{enumerate}
\item\label{m1p1}
  $h$ has type $T_1$ (rank $3$).  In this case $H$ is the trivial hyperplane centered at $\Rad(\chi)$ and
  $R^{\uparrow}(H)$ is the set of the lines of $\PG(V)$ that meet $[\Rad(\chi)]$ non-trivially.
\item\label{m1p2}
  $h$ has type $T_2$ (rank $5$), namely $\Rad(\chi)$ is $1$-dimensional. In this case $H$ is a trivial extension $\ext_{\Rad(\chi)}(\expa(H_0))$ of a symplectic hyperplane $\expa(H_0)$, constructed in a complement $V_0$ of $\Rad(\chi)$ in $V$ starting from the line-set $H_0$ of a symplectic generalized quadrangle. The elements of $R^\uparrow(H)$ are the lines of $\PG(V)$ that either belong to $H_0$ or pass through the point $[\Rad(\chi)] = R_\downarrow(H)$ or such that their projection onto $V_0$ is in $H_0.$
\item\label{m1p3}
  $h$ has type $T_3$ (rank $6$).  Then $H$ is a decomposable hyperplane $\dec(H_0,H_1)$ arising form the hyperplanes $H_0$ and $H_1$ of $\cG_3(V_0)$ and $\cG_3(V_1)$ for a suitable decomposition $V = V_0\oplus V_1$ with $\dim(V_0) = \dim(V_1) = 3$. Then $R^{\uparrow}(H) = \{[x,y] \colon x\in V_0\setminus\{0\}, ~ y\in V_1\setminus\{0\}\}.$
\item\label{m1p4}
  $h$ has type $T_4$ (rank $6$). Then $R^{\uparrow}(H) =
  \{[x,y]=[a+b,\omega(a)] \colon a\in V_0\setminus\{0\}, b\in V_1\}\cup\{ [x,y]\subseteq V_1\}$
    for a  decomposition $V = V_0\oplus V_1$ with $\dim(V_0) = \dim(V_1) = 3$ and $\omega$
  an isomorphism of $V$ interchanging $V_0$ and $V_1$.
\item\label{m1p5}
  $h$ has type $T^{(1)}_{10,\lambda}$ or $T^{(2)}_{10,\lambda}$ (rank $6$). Then $R^\uparrow(H)$ is a Desarguesian line spread of $\PG(V)$ corresponding
  to the field extension $\KK[\mu]$ with $\mu$ a root of $p_{\lambda}(t)$.
\end{enumerate}
\end{theo}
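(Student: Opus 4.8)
The plan is to run a case analysis over the finitely many types of Table~\ref{Tab F}, which by Revoy's classification~\cite{Revoy79} exhaust all $h$ when $\dim V\le 6$. Since the upper radical is $\GL(V)$-equivariant — if $H'=g(H)$ then $R^\uparrow(H')=\{g(Y):Y\in R^\uparrow(H)\}$ — and since every piece of geometric data occurring in the statement (the subspace $\Rad(\chi)$, a decomposition $V=V_0\oplus V_1$, an isomorphism $\omega$, a Desarguesian spread attached to a quadratic extension) is carried along by any $g\in\GL(V)$, it is enough to treat one representative of each type, for which we use the normal forms of Table~\ref{Tab F}. Everything rests on the criterion recalled in Section~\ref{prel k-linear alt}: a line $[x,y]$ of $\PG(V)$ lies in $R^\uparrow(H)$ if and only if $\chi(x,y,z)=0$ for all $z\in V$, i.e. the contraction $\iota_{x\wedge y}\chi$ is the zero functional on $V$. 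Writing $\chi(x,y,z)=\sum_{j}\ell_j(x,y)z_j$, each $\ell_j$ is a linear combination of the Pl\"ucker coordinates $|x,y|_{kl}$, so $R^\uparrow(H)$ is exactly the set of lines cut out in the Grassmannian $\cG_2(V)$ by the linear system $\{\ell_j=0\}_j$; solving this system, together with matching the normal form against the constructions of Section~\ref{sec 3}, will yield both $R^\uparrow(H)$ and the announced structure of $H$.

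\textbf{Types $T_1$, $T_2$, $T_3$.} For $T_1$ take $\chi=\underline{123}$: then $\chi(x,y,z)$ is the determinant of the $3\times3$ matrix built from the first three coordinates of $x,y,z$, which vanishes for all $z$ precisely when the images of $x,y$ in $V/\Rad(\chi)$ are linearly dependent, i.e. when $[x,y]$ meets $[\Rad(\chi)]$; the same determinant vanishes on a $3$-space $X$ iff $X\cap\Rad(\chi)\neq 0$, so $H$ is the trivial hyperplane centred at $\Rad(\chi)$. For $T_2$ take $\chi=e^1\wedge\omega$ with $\omega$ non-degenerate alternating on a $4$-space $W$ and $V=\langle e_1\rangle\oplus W\oplus\Rad(\chi)$: contracting first with $e_1$ forces $\omega(x,y)=0$, after which the remaining relations say $x_1\bar y-y_1\bar x$ lies in $\Rad(\omega)=0$ (bars denoting projection onto $W$); separating the case $x_1=y_1=0$ from its complement produces the families listed in the statement, and $\chi$ is manifestly $\ext_{\Rad(\chi)}(\expa(H_0))$ with $H_0$ the line set of the symplectic generalized quadrangle defined by $\omega$ — alternatively $R^\uparrow(H)$ follows from the behaviour of the upper radical under extension and expansion established in Section~\ref{sec 3}. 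For $T_3$ take $\chi=\underline{123}+\underline{456}$ with $V_0,V_1$ the two coordinate $3$-spaces: since $\chi(x,y,z)=\chi_0(x^0,y^0,z^0)+\chi_1(x^1,y^1,z^1)$ splits, vanishing in $z$ forces $x^0\wedge y^0=0$ and $x^1\wedge y^1=0$, and a short case analysis (according as $x^0$ and $x^1$ are zero or not) shows that every such line equals $\langle a,b\rangle$ with $a\in V_0\setminus\{0\}$, $b\in V_1\setminus\{0\}$; this also identifies $H=\dec(H_0,H_1)$ for the volume-form hyperplanes $H_0\subseteq\cG_3(V_0)$, $H_1\subseteq\cG_3(V_1)$.

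\textbf{Type $T_4$.} The representative of this type realizes an isomorphism $\bigwedge^2V_0\xrightarrow{\sim}V_1^{*}$, hence — after fixing a volume form on $V_0$ — an isomorphism $\omega\colon V_0\xrightarrow{\sim}V_1$ which we extend to an isomorphism of $V$ interchanging the two summands. Expanding $\iota_{x\wedge y}\chi=0$, three of the equations force the $V_0$-coordinates $|x,y|_{12},|x,y|_{13},|x,y|_{23}$ to vanish, so the image of $\langle x,y\rangle$ in $V_0$ has dimension at most $1$; if it is $0$ the line lies in $V_1$ and the remaining equations hold identically, while if it is $1$, spanned by $a$, then — writing the line as $\langle a+b,c\rangle$ with $b\in V_1$ and $c\in V_1\setminus\{0\}$ — the remaining three equations form a homogeneous linear system of rank $2$ in the coordinates of $c$ whose kernel is exactly $\langle\omega(a)\rangle$, while $b$ stays free. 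This gives $R^\uparrow(H)=\{[a+b,\omega(a)]:a\in V_0\setminus\{0\},\,b\in V_1\}\cup\{[x,y]\subseteq V_1\}$, as claimed.

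\textbf{Types $T^{(1)}_{10,\lambda}$, $T^{(2)}_{10,\lambda}$; the main obstacle.} Here $n=6$, so every point of $\PG(V)$ is a pole of degree at least $1$; appealing to Theorem~\ref{ts} (or by a direct check on the normal form) such $H$ are spread-like, so through each point of $\PG(V)$ passes exactly one line of $R^\uparrow(H)$ and $R^\uparrow(H)$ is a line spread. The remaining, genuinely delicate, point is to identify this spread: the normal form of Table~\ref{Tab F} carries a $\KK$-linear map $M$ on $V$ with $p_\lambda(M)=0$ relative to which $\chi$ is semilinear in the appropriate sense, and one shows that $\langle x,y\rangle\in R^\uparrow(H)$ if and only if $\langle x,y\rangle$ is $M$-invariant, i.e. $\langle x,y\rangle=\langle x,Mx\rangle$; this makes $V$ a $3$-dimensional vector space over $L:=\KK[\mu]\cong\KK[t]/(p_\lambda(t))$ and $R^\uparrow(H)$ the family of its $1$-dimensional $L$-subspaces, that is, the Desarguesian line spread of $\PG(V)$ associated with $L$. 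The main obstacle of the whole argument is to carry out this last step uniformly across the three regimes hidden in the pair $(T^{(1)}_{10,\lambda},T^{(2)}_{10,\lambda})$ — $\chr\KK$ odd with $p_\lambda(t)=t^2-\lambda$, $\chr\KK=2$ separable with $p_\lambda(t)=t^2+t+\lambda$, and $\chr\KK=2$ inseparable with $p_\lambda(t)=t^2+\lambda$ — together with, to a lesser extent, the coordinate bookkeeping of type $T_4$.
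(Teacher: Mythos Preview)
Your treatment of types $T_1$--$T_4$ is essentially the paper's: match the normal form to the relevant construction of Section~\ref{sec 3} (trivial extension, expansion, block decomposition) and read off $R^\uparrow(H)$ from the Pl\"ucker equations of Table~\ref{tab_n6}.

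For $T^{(i)}_{10,\lambda}$ you take a genuinely different route. The paper does not produce an endomorphism $M$; instead it first verifies spread-likeness by computing three explicit $4\times4$ principal minors of $M_u$ --- these are (squares of) $p_\lambda$ evaluated at coordinate pairs of $u$, so irreducibility of $p_\lambda$ forces $\rank M_u=4$ for all $u\neq0$ --- and then proves a separate geometric lemma (Lemma~\ref{ldes}): any line-spread of $\PG(5,\KK)$ arising as $R^\uparrow(H)$ for some hyperplane $H$ is normal, hence Desarguesian by Barlotti--Cofman~\cite{BC72}. Your approach --- exhibit $M\in\mathrm{End}(V)$ with $p_\lambda(M)=0$ and show that the lines of $R^\uparrow(H)$ are exactly the $M$-invariant $2$-subspaces --- is more constructive and immediately identifies the quadratic extension $\KK[\mu]$, at the cost of having to verify the $M$-semilinearity of $\chi$ separately in each of the three regimes you list. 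The paper's argument is uniform and purely geometric but relies on the external result~\cite{BC72} and does not by itself single out which extension occurs.

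Two small corrections. First, your appeal to Theorem~\ref{ts} does not establish that $H$ is spread-like: that theorem concerns the \emph{existence} of spread-like hyperplanes, not the status of the specific $H$ in hand, and in the paper its proof in fact sits downstream of Lemma~\ref{ldes}; you really do need the ``direct check on the normal form'' you mention parenthetically. Second, for $T^{(2)}_{10,\lambda}$ the polynomial in Table~\ref{Tab F} is $p_\lambda(t)=t^2+\lambda t+1$, not $t^2+t+\lambda$.
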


\begin{theo}\label{ts}
Let $V:=V(6,\KK).$ Line-spreads of $\PG(V)$ induced by hyperplanes of $\cG_3(V)$ exist if and
only if $\KK$ is a non-quadratically closed field.
\end{theo}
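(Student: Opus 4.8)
The plan is to first restate ``spread-like'' purely in terms of the defining form $\chi$, and then prove the two implications separately: sufficiency by an explicit construction from a quadratic extension, necessity by a short ``pencil of Pfaffians'' argument. Write $\chi_p:=\chi(p,\cdot,\cdot)$ for the alternating bilinear form on $V$ obtained by freezing the first slot. Since $\chi$ is alternating, $p\in\Rad(\chi_p)$ always, and $[p,q]\in R^{\uparrow}(H)$ exactly when $\chi(p,q,\cdot)\equiv 0$, i.e.\ $q\in\Rad(\chi_p)$; hence the degree of $[p]$ equals $\dim\Rad(\chi_p)-1$. Therefore $H$ is spread-like if and only if $\dim\Rad(\chi_p)=2$ for every $0\neq p\in V$; in that case $\Rad(\chi)=0$ (so $\chi$ has rank $6$) and $R^{\uparrow}(H)=\{\,[\Rad(\chi_p)] : [p]\in\PG(V)\,\}$ is a line-spread of $\PG(V)$. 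I also record the following: if $\langle e_1,e_2\rangle\in R^{\uparrow}(H)$ then $\chi(e_1,e_2,\cdot)\equiv 0$, so $e_2\in\Rad(\chi_{e_1})$ and $e_1\in\Rad(\chi_{e_2})$; together with $e_i\in\Rad(\chi_{e_i})$ and spread-likeness this forces $\Rad(\chi_{e_1})=\Rad(\chi_{e_2})=\langle e_1,e_2\rangle$.

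For sufficiency, suppose $\KK$ is not quadratically closed and fix a quadratic field extension $\mathbb{L}=\KK[\mu]$. Regard $V$ as a $3$-dimensional $\mathbb{L}$-vector space, let $f$ be a non-zero alternating $\mathbb{L}$-trilinear form on it (a ``determinant'', unique up to an $\mathbb{L}$-scalar), let $\varphi\colon\mathbb{L}\to\KK$ be any non-zero $\KK$-linear functional, and set $\chi:=\varphi\circ f$. Then $\chi$ is a non-zero alternating $\KK$-trilinear form on $V$. For $0\neq p\in V$ the $\mathbb{L}$-bilinear form $f(p,\cdot,\cdot)$ has $\mathbb{L}$-radical $\mathbb{L}p$ and is $\mathbb{L}$-surjective modulo it; since $\varphi$ is $\KK$-surjective, it follows that $\Rad(\chi_p)=\mathbb{L}p$, a $2$-dimensional $\KK$-subspace. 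Hence $H$ is spread-like and $R^{\uparrow}(H)$ is precisely the Desarguesian line-spread of $\PG(V)$ attached to $\mathbb{L}$. This argument is characteristic-free — in particular it covers the inseparable case, where the trace vanishes and a general $\varphi$ is genuinely needed — and it recovers case~\ref{m1p5} of Theorem~\ref{main theorem 1}, extending \cite{DS10,DS14}.

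For necessity, assume $\KK$ is quadratically closed and, for a contradiction, that $H$ is spread-like. Fix any $\ell=\langle e_1,e_2\rangle\in R^{\uparrow}(H)$ and pass to the $4$-dimensional quotient $\bar V:=V/\ell$. By the observation above, $\chi_{e_1}$ and $\chi_{e_2}$ have radical exactly $\ell$, so they descend to non-degenerate alternating forms $\bar\chi_1,\bar\chi_2$ on $\bar V$. For every $[s:t]\in\PG(1,\KK)$ the form $\chi_{se_1+te_2}=s\chi_{e_1}+t\chi_{e_2}$ has radical containing $\ell$, hence — by spread-likeness — equal to $\ell$; consequently $s\bar\chi_1+t\bar\chi_2$ is non-degenerate on $\bar V$ for \emph{every} $[s:t]$. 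But fixing bases and matrices $\bar M_1,\bar M_2$ of $\bar\chi_1,\bar\chi_2$, the function $\mathrm{Pf}(s\bar M_1+t\bar M_2)=\alpha s^2+\beta st+\gamma t^2$ is a binary quadratic with $\alpha=\mathrm{Pf}(\bar M_1)\neq 0$; since every quadratic polynomial over $\KK$ has a root in $\KK$, this form has a non-trivial zero $[s_0:t_0]$, so $s_0\bar\chi_1+t_0\bar\chi_2$ is degenerate, whence $\dim\Rad(\chi_{s_0e_1+t_0e_2})\geq 3$ and the point $[s_0e_1+t_0e_2]$ has degree at least $2$ (in fact at least $3$). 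This contradicts spread-likeness, so no spread-like hyperplane of $\cG_3(V)$ exists over a quadratically closed field.

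The steps requiring care are all short: the dimension bookkeeping in the dictionary (that a radical containing $\ell$ must equal $\ell$, and that the induced quotient forms are non-degenerate), the computation $\Rad(\chi_p)=\mathbb{L}p$ and the non-vanishing of $\chi=\varphi\circ f$ in the construction, and the observation $\alpha=\mathrm{Pf}(\bar M_1)\neq 0$. The one conceptual point — and the only place a careless reading could make the statement fail — is that ``quadratically closed'' must be taken in the strong sense that $\KK$ admits no proper quadratic extension (equivalently, every quadratic polynomial over $\KK$ has a root in $\KK$): otherwise $\KK=\mathbb{F}_2$ would contradict the theorem through the construction of the second paragraph. With this reading the Pfaffian argument is uniform in the characteristic and dovetails with the role of the two families $T^{(1)}_{10,\lambda},T^{(2)}_{10,\lambda}$ in characteristic $2$; it also keeps the proof ``direct'', bypassing the classification of \cite{Revoy79} (through which the result is in any case implicit).
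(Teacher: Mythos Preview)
Your proof is correct and takes a genuinely different route from the paper on both directions. For sufficiency, the paper writes down the explicit coordinate forms $T_{10,\lambda}^{(1)}$ and $T_{10,\lambda}^{(2)}$ from Table~\ref{Tab F} and checks, by computing three specific $4\times 4$ principal minors of $M_u$, that $\rank M_u=4$ for all $u\neq 0$; you instead give the coordinate-free construction $\chi=\varphi\circ f$ from the $\mathbb{L}$-determinant, which is cleaner and makes the Desarguesian nature of the resulting spread transparent from the outset. For necessity, the paper first proves (Lemma~\ref{ldes}) that any spread arising from a hyperplane is \emph{normal} and hence Desarguesian via Barlotti--Cofman~\cite{BC72}, and then invokes Bruck--Bose~\cite{BB66} to produce a $2$-dimensional division algebra over $\KK$, shown by a short argument to be a field; you bypass both external citations with the pencil-of-Pfaffians argument on $V/\ell$, which is shorter and entirely self-contained. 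The trade-off is that the paper's route yields the extra structural fact that \emph{every} spread induced by a hyperplane is Desarguesian (Lemma~\ref{ldes}), whereas your argument establishes only the existence statement of Theorem~\ref{ts} itself; on the other hand your proof is more elementary, uniform in the characteristic, and makes the quadratic obstruction explicit. Your caveat about reading ``quadratically closed'' in the strong sense (no proper quadratic extension, rather than merely ``every element is a square'') is well taken and matches the paper's implicit usage.
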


Draisma and Shaw prove that when $\KK$ is a finite field there
always exist hyperplanes of $\cG_3(V)$ with $\dim V=6$ having a Desarguesian
spread as upper radical \cite[\S 3.1 and \S 3.2]{DS14},
while these hyperplanes do not exist when $\KK$ is algebraically closed
with characteristic $0$, \cite[Remark 9]{DS14}.

Our Theorem~\ref{ts} generalizes their results to arbitrary fields $\KK$ and provides necessary
and sufficient conditions for the existence of spread-like hyperplanes for $n=6$; its statement
correspond to Theorem 20, point 5 in \cite{ILP17} (and also to Theorem 4 in \cite{ILP17}).

It also further clarifies
the result of~\cite{Revoy79} linking  the forms $T_{10,\lambda}^{(i)}$ with quadratic extensions
of the field $\KK$.

\begin{theo}\label{main theorem 2}
  Suppose $\dim(V)=7$ and let $h$ be a non-trivial linear functional on $\bigwedge^3V$ having type as described in Table~\ref{Tab F}.
   Denote by $\chi$ the associated alternating trilinear form  and by $H$ the
  hyperplane of $\cG_3(V)$ defined by $h$.

If $\rank(h)\leq 6$ then $H$ is a trivial extension $\ext_{\Rad(h)}(H')$ where $H'$ is a hyperplane of $\cG_3(V')$ with $V=\Rad(h)\oplus V'$, $\dim(V')\leq 6$, and $H'$ is defined by a trilinear form $h'$  of type as in Theorem~\ref{main theorem 1}.

If $\rank(h)=7$  then one of the following occurs:
\begin{enumerate}[1)]
\item\label{m2p1}
  $h$ has type $T_5$. Then, there are two
  non-degenerate symplectic polar spaces ${\cS}_1$ and ${\cS}_2$, embedded as distinct hyperplanes in $\PG(V)$ such that both determine the same polar space ${\cS}_0$ on their intersection. The radical of ${\cS}_0$ is a point, say $p_0$. There are also two totally isotropic planes $A_1$ and $A_2$ of ${\cS}_0$ such that $A_1\cap A_2 = \{p_0\}$. The poles of $H$ are the points of $\cS_1\cup\cS_2$, the poles of degree $4$ being the points of $A_1\cup A_2$. The lines of ${\cP}(H)$ are the totally isotropic lines of ${\cS}_i$ that meet $A_i$ non-trivially, for $i = 1, 2$.
\item\label{m2p2}
 $h$ has type $T_6$. The poles of $H$ lie in a hyperplane $\cS$ of $\PG(V)$. A non-degenerate polar space
 of symplectic type is defined in $\cS$ and a totally isotropic plane $A$ of $\cS$ is given.
 The lines of ${\cP}(H)$ are the totally isotropic lines of $\cS$ that meet $A$ non-trivially.
 The points of $A$ are the poles of $H$ of degree $4$.
\item\label{m2p3}
  $h$ has type $T_7$. The poles of $H$ are the points of a cone of $\PG(V)$
having as vertex a plane $A$ and as basis a hyperbolic quadric $\cQ$. A conic $\cC$ is
given in $A$ such that the elements of $\cC$ are the poles of degree $4$.
There is a correspondence mapping
each point $[x]\in\cC$ to a line $\ell_x$ contained in a regulus of $\cQ$.
For each $[x]\in\cC$ it is possible to define a line spread
$\cS_x$ of $\langle A,\ell_x\rangle/\langle x\rangle\cong\PG(3,\KK)$ such that
$R^{\uparrow}(H)=\{\ell\subseteq\langle x,s\rangle: [x]\in\cC, s\in\cS_x\}$.
\item\label{m2p4}
 $h$ has type $T_8$.
 In this case $H=\expa(H_0)$ is a symplectic hyperplane. In particular, the geometry ${\cP}(H)$ is a non-degenerate polar space of symplectic type and rank $3$, naturally embedded in a hyperplane $[V_0]$ of $\PG(V)$. All poles of $H$ have degree $4$.
\item\label{m2p5}
 $h$ has type $T_9$. Then ${\cP}(H)$ is a split-Cayley hexagon naturally embedded in a non-singular quadric of $\PG(V)$. All poles of $H$ have degree $2$.
\item\label{m2p6} $h$ has type $T^{(i)}_{11,\lambda}$, $i=1, 2$.
 The poles of $H$ are the points of a subspace $\cS$ of codimension $2$ in
 $V$. There is only one point $[p]\in\cS$ which is a pole of degree $4$. Furthermore, there is a
 line-spread $\cF$ of $\cS/\langle p\rangle\cong\PG(3,\KK)$ such that
 $R^{\uparrow}(H):=\{\ell\subseteq[\pi,p]: \pi\in\cF \}.$
\end{enumerate}
\end{theo}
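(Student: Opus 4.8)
The plan is to argue according to the value of $\rank(h)$: for $\rank(h)\le 6$ we reduce to Theorem~\ref{main theorem 1} via the trivial extension construction, and for $\rank(h)=7$ we run a uniform case analysis over the normal forms of Table~\ref{Tab F}. Suppose first that $\rank(h)=r\le 6$. Then $\Rad(\chi)$ has dimension $7-r\ge 1$; fix a complement $V'$ with $V=\Rad(h)\oplus V'$, and let $\chi'$ be the restriction of $\chi$ to $V'$. Since $\chi$ kills any argument lying in $\Rad(\chi)$, one checks at once that $\Rad(\chi')=\Rad(\chi)\cap V'=0$, so $\chi'$ has trivial radical and $\rank(\chi')=r\le 6$. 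I would then quote from the machinery of Section~\ref{sec 3} that $H=\ext_{\Rad(h)}(H')$, that a point $[p]$ is an $H$-pole exactly when its projection onto $V'$ is an $H'$-pole or lies in $[\Rad(\chi)]$, and that the lines of $R^{\uparrow}(H)$ are precisely those meeting $[\Rad(\chi)]$ or projecting onto a line of $R^{\uparrow}(H')$. As $\chi'$ is then a form on a space of dimension $\le 6$ with trivial radical, it is of one of the types of Theorem~\ref{main theorem 1}, which gives the first assertion.

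Now assume $\rank(h)=7$, so $\Rad(\chi)=0$. After fixing the basis $E$, let $M_p$ be the $7\times7$ alternating matrix with entries $\chi(p,e_i,e_j)$; its entries are linear in the coordinates of $p$ and $p$ always lies in its radical, so for $[p]\in\PG(V)$ one has $\rank(M_p)\in\{2,4,6\}$ and $\delta(p)=6-\rank(M_p)$. Hence $[p]$ is an $H$-pole iff $\rank(M_p)\le 4$, equivalently iff all seven $6\times6$ sub-Pfaffians of $M_p$ — which are cubic forms in $p$ — vanish, and $[p]$ is a pole of degree $4$ iff $\rank(M_p)\le 2$; thus $R_p(H)$ is read off from the radical of $M_p$. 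Dually, a line $\ell=[x,y]$ lies in $R^{\uparrow}(H)$ precisely when $\chi(x,y,z)=0$ for all $z\in V$, i.e. when $\ell$ is totally isotropic for $M_p$ at every $[p]\in\ell$; translating this through the Pl\"ucker coordinates $|x,y|_{ij}$ yields the explicit equations of $R^{\uparrow}(H)$. So for each type of Table~\ref{Tab F} the whole statement becomes the rank stratification of the linear pencil of alternating matrices $M_p$ together with the determination of its totally isotropic lines — a finite linear-algebra task once the normal form is substituted. Carrying this out case by case produces, for $T_5$, two non-degenerate symplectic spaces $\cS_1,\cS_2$ in distinct hyperplanes meeting along a common $\cS_0$ with point radical $p_0$ and with the planes $A_1,A_2\subseteq\cS_0$ as the $\rank(M_p)\le2$ locus; for $T_6$, the hyperplane $\cS$ with its symplectic form and the plane $A$ of degree-$4$ poles; for $T_7$, the cone over a hyperbolic quadric $\cQ$ with plane vertex $A$, the conic $\cC$ of degree-$4$ poles, the correspondence $[x]\mapsto\ell_x$ into a regulus of $\cQ$, and the line spreads $\cS_x$ of $\langle A,\ell_x\rangle/\langle x\rangle$; for $T_8$, the locus $\{\rank(M_p)\le2\}$ equal to a hyperplane $[V_0]$ carrying a rank-$3$ symplectic polar space whose totally isotropic lines are exactly $R^{\uparrow}(H)$, whence $H=\expa(H_0)$; for $T_9$, the locus $\{\rank(M_p)\le4\}$ equal to a non-singular quadric whose hexagon lines form $R^{\uparrow}(H)$; and for $T^{(i)}_{11,\lambda}$, the codimension-$2$ subspace $\cS$ of poles, its unique degree-$4$ pole $[p]$, and the line spread $\cF$ of $\cS/\langle p\rangle$. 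In the $T_7$ and $T^{(i)}_{11,\lambda}$ cases the relevant line spreads of a $\PG(3,\KK)$ exist precisely because of the hypotheses on $\KK$ built into Table~\ref{Tab F} (the parameter $\lambda$ forces a quadratic extension of $\KK$), which is the $n=7$ counterpart of the phenomenon isolated in Theorem~\ref{ts}; so the constructions are non-vacuous.

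The arithmetic is light in most cases, but two carry the real weight. The hardest will be $T_9$: one must recognize the degree-$2$ pole locus as a non-singular quadric and its radical lines as the lines of the \emph{split Cayley hexagon}. I would do this by exploiting the fact that, under the standing hypotheses on $\KK$, $T_9$ is the generic rank-$7$ form whose stabilizer is (a form of) $G_2(\KK)$ — see \cite{CH88} — and then invoking the classical description of the $G_2$-geometry naturally embedded in the associated quadric. The next most delicate case is $T_7$, because of the interlocking data (the conic $\cC$, its reguli on $\cQ$, and the $\cC$-indexed family of spreads $\cS_x$), which must be shown to be intrinsic and to reconstruct $R^{\uparrow}(H)$ exactly; a convenient device here is to fibre the cone over its vertex plane $A$ and reduce the spread statement to the $\dim V\le 6$ results already established. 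For the remaining cases ($T_5,T_6,T_8,T^{(i)}_{11,\lambda}$) the identifications, including the verification in $T_5$ that $\cS_1,\cS_2$ are genuinely distinct hyperplanes with $A_1\cap A_2=\{p_0\}$, follow by direct computation from the normal forms.
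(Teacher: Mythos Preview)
Your overall strategy---reducing the $\rank(h)\le6$ case to a trivial extension and then handling the rank-$7$ normal forms by computing the rank stratification of $M_u$ together with the Pl\"ucker equations of the upper radical---is exactly what the paper does. The paper compiles the matrices $M_u$ in Table~\ref{tab3} and the radical equations in Table~\ref{tab_n7}, and each subsection of Section~4.2 amounts to reading off the pole locus, the degree-$4$ locus, and the line conditions from those tables, followed by a short geometric interpretation.

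Two places where your plan diverges from the paper, one harmless and one not. For $T_9$ you propose invoking the $G_2$ stabilizer from~\cite{CH88} and then appealing to the classical description of the $G_2$-geometry. The paper instead simply observes that the seven Pl\"ucker equations obtained for $R^{\uparrow}(H)$ in case $T_9$ are literally the equations given in~\cite[\S 2.4.13]{VM98} (and~\cite{Ti59}) for the standard embedding of the split Cayley hexagon in the parabolic quadric. Your route is more conceptual; the paper's is more elementary and self-contained, since it avoids any discussion of forms of $G_2$ and works uniformly over all fields.

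For $T_7$, however, your claim that the line spreads $\cS_x$ exist ``precisely because of the hypotheses on $\KK$ built into Table~\ref{Tab F} (the parameter $\lambda$ forces a quadratic extension of $\KK$)'' is a genuine slip: $T_7$ carries no parameter and no field condition in Table~\ref{Tab F}, and the spreads $\cS_x$ are \emph{not} the Desarguesian spreads of Theorem~\ref{ts}. The paper constructs $\cS_x$ directly and synthetically: for each pole $[p]$ of degree~$2$ one takes the plane $\pi_p$ spanned by the lines of $R^{\uparrow}(H)$ through $[p]$, checks $x\in\pi_p\subseteq\langle A,\ell_x\rangle$, and shows that if two such planes $\pi_p,\pi_q$ met in a line through $x$ not contained in $A$, every point of that line would have degree $\ge3$, contradicting the already-established degree stratification; together with $A$ itself, these planes give the spread of $\Res_{\langle A,\ell_x\rangle}(x)$. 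This argument works over any $\KK$ and does not reduce to the $\dim V\le 6$ classification. Your proposed device of ``fibring the cone over $A$ and reducing to the $\dim V\le6$ results'' does not obviously supply this, and the analogy with Theorem~\ref{ts} is misplaced for $T_7$ (it \emph{is} the right intuition for $T_{11,\lambda}^{(i)}$, where $\lambda$ genuinely encodes a quadratic extension, but even there the paper argues via the same synthetic $\pi_p$-construction rather than by reduction to $T_{10,\lambda}^{(i)}$).
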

Theorems~\ref{main theorem 1} and~\ref{main theorem 2} correspond to
Theorems 20 and 21 of \cite{ILP17}, where they were presented without a detailed proof.
In the present paper we have chosen to refine the results announced \cite{ILP17},
by providing a fully geometric description of the geometries of poles arising for $n\leq 7$,
without having to recourse to coordinates.
In any case, the original statements for cases 3,5 and 7 of \cite[Theorem 21]{ILP17} can be
immediately deduced from Theorem~\ref{main theorem 2} in light of the equations of
Table~\ref{tab_n7}.

\subsection{Organization of the paper}\label{organization}
In Section~\ref{geometry of poles} we shall explain how to algebraically describe points and lines of a geometry of poles.
Draisma and Shaw \cite{DS10} have shown that either the set of $H$-poles
is all of $\PG(V)$ or it determines an
algebraic hypersurface in $\PG(V)$ described by an equation
of degree $(n-3)/2$. In Section~\ref{geometry of poles} we
shall study  such varieties. In particular, in Section~\ref{ssec2} we shall
explicitly determine their equations as determinantal varieties and in Section~\ref{rehy}
describe some hyperplanes whose variety of poles is reducible in the product of distinct linear
factors.
In Section~\ref{sec 3} we will present  three families of hyperplanes of $\cG_k(V)$ obtained by {\it extension}, {\it expansion} and {\it block decomposable construction}.
Our main theorems will be proved in Section~\ref{sec4}.
For the ease of the reader, all the tables are collected in Appendix~\ref{tables}.

\section{Geometry of poles}\label{geometry of poles}
Throughout this section we take $E=(e_i)_{1\leq i\leq n}$ as a given basis of $V$ and the coordinates of vectors of $V$ will be given with respect to $E.$
\subsection{Determination of points and lines}
\label{determination of points and lines}
Let $h\colon \bigwedge^3 V\rightarrow \KK$ be a linear functional associated to a given hyperplane $H$ of $\cG_3(V)$, where $V$ is a $n$-dimensional vector space over a field $\KK.$

For any $u\in V$ consider the bilinear alternating form
\[h_u\colon V/\langle u\rangle\times V/\langle u\rangle\rightarrow \KK,\,\,h_u(x+\langle u\rangle,y+\langle u\rangle):=h(u\wedge x\wedge y).\]
By definition of $H$-pole, a point $[u] \in \PG(V)$ is a $H$-pole if and only if the radical of $h_u$ is not trivial. Consider also the bilinear alternating form on $V$
\begin{equation}\label{eq Mu}
 \chi_u\colon V\times V\rightarrow \KK,\,\,\chi_u(x,y)=h(u\wedge x\wedge y)=x^TM_uy
\end{equation}
where $M_u$ is the matrix associated to $\chi_u$ with respect to the basis $E$ of $V$.
 Clearly, $\Rad(h_u)=(\Rad(\chi_u))/\langle u\rangle$; thus the rank of the matrix of $h_u$ with respect to any basis of $V/\langle u\rangle$
 and the rank of the matrix $M_u$ are exactly the same.
\begin{prop}\label{rank Mu-prop}
Let $[u]$ be a point of $\PG(V)$ with $u=(u_i)_{i=1}^n$ and let  $M_u$ be the
$n\times n$-matrix associated to the alternating bilinear form $\chi_u$. If $u_i\not=0$ then the $i$-th column (row) of $M_u$ is a linear combination of the other columns (rows) of $M_u.$
 \end{prop}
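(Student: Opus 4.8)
The plan is to observe that $M_u$ is essentially the matrix of the bilinear form $\chi_u(x,y)=h(u\wedge x\wedge y)$, and to exploit the fact that $u$ itself lies in the radical of $\chi_u$. Indeed, for any $y\in V$ we have $\chi_u(u,y)=h(u\wedge u\wedge y)=0$ because $u\wedge u=0$; likewise $\chi_u(x,u)=0$. Hence $M_u u = 0$ and $u^T M_u = 0$, i.e. the column vector $u=(u_i)_{i=1}^n$ lies in the (right) kernel of $M_u$, and by skew-symmetry in the left kernel as well.

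The remaining step is purely linear-algebraic. Writing $M_u = (m_{jk})$ and denoting by $C_1,\dots,C_n$ its columns, the relation $M_u u = 0$ reads $\sum_{k=1}^n u_k C_k = 0$. If $u_i\neq 0$, we may solve for $C_i$:
\[
C_i \;=\; -\,\frac{1}{u_i}\sum_{k\neq i} u_k\, C_k,
\]
which exhibits the $i$-th column of $M_u$ as a linear combination of the remaining columns. Applying the same argument to the rows, using $u^T M_u = 0$ (equivalently, the skew-symmetry $M_u^T=-M_u$ together with $M_u u=0$), gives the corresponding statement for rows. This completes the proof.

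I do not anticipate any real obstacle here: the only subtlety worth spelling out is the identity $\chi_u(u,\cdot)=0$, which is immediate from the alternating property of the wedge, and the passage from $M_u u = 0$ to the column dependence, which just requires $u_i\neq 0$ so that division is legitimate. No field hypotheses beyond those already in force are needed, and the characteristic $2$ case presents no difficulty since the argument never divides by $2$ and only uses $x\wedge x=0$.
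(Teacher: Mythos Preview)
Your proof is correct and follows essentially the same approach as the paper: both exploit that $u$ lies in the kernel of $M_u$ (since $\chi_u(x,u)=h(u\wedge x\wedge u)=0$), yielding the linear dependence $\sum_k u_k C_k=0$ among the columns, from which $C_i$ is expressed in terms of the others when $u_i\neq 0$. Your presentation is slightly more direct in stating $M_u u=0$ outright, whereas the paper phrases it as $x^T M_u u=0$ for all $x$ before drawing the same conclusion, but the argument is identical.
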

\begin{proof}
Denote by $C_1,\dots C_n$ the columns of the matrix $M_u$ and let $x=(x_i)_{i=1}^n$ and $u=(u_i)_{i=1}^n.$
Let  $M_u^{(i)}$ be the $(n-1)\times(n-1)$-submatrix of $M_u$ obtained by deleting its $i$-th column and  $i$-th row.
For any $x\in V,$ the condition $x^TM_uu=0$ is equivalent to
\[(x^T\cdot C_1) u_1+(x^T\cdot C_2) u_2+ \cdots +(x^T\cdot C_i) u_i+ \cdots+(x^T\cdot C_n) u_n=0,\]
where $x^T\cdot C_i:=\sum_{j=1}^n x_jc_{ji}$ with $C_i=(c_{ji})_{j=1}^n.$
Since $u_i\not=0$, we have
\[(x^T\cdot C_1) \frac{u_1}{u_i}+(x^T\cdot C_2) \frac{u_2}{u_i}+ \cdots +x^T\cdot C_i+\cdots +(x^T\cdot C_{n}) \frac{u_{n}}{u_i}=0,\]
i.e.
 \[x^T\cdot (\frac{u_1}{u_i}C_1+\cdots +C_i+\cdots+ \frac{u_{n}}{u_{i}}C_{n})=0.\]
The previous condition holds for any $x\in V,$ hence
\[C_i=-\sum_{\tiny{\begin{array}{l}
j=1\\
j\not=i
\end{array}}}^{n}\frac{u_j}{u_i}C_j.\]

  As $M_u$ is antisymmetric, the same
argument can be applied also to the $i$-th row of $M_u.$
\end{proof}

The following corollaries are straightforward.
\begin{corollary}\label{rank Mu-cor}
$\rank(M_u)\leq n-1.$
\end{corollary}
\begin{corollary}\label{corollary- H pole}
The point $[u]$ is a $H$-pole if and only if $\rank(M_u)\leq n-2.$
\end{corollary}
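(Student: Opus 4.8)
The plan is to unwind the definition of an $H$-pole through the identifications already assembled just before Proposition~\ref{rank Mu-prop}. By definition, $[u]$ is an $H$-pole precisely when $\Rad(h_u)\neq 0$. First I would note that $u$ lies in the radical of $\chi_u$, since $\chi_u(u,x)=h(u\wedge u\wedge x)=0$ for all $x\in V$; hence $\langle u\rangle\subseteq\Rad(\chi_u)$, the form $\chi_u$ descends to $V/\langle u\rangle$, and the descended form is exactly $h_u$. Consequently $\Rad(h_u)=\Rad(\chi_u)/\langle u\rangle$ and
\[\dim\Rad(\chi_u)=\dim\Rad(h_u)+1.\]

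Next I would pass to ranks. Since $M_u$ is the Gram matrix of the alternating bilinear form $\chi_u$ with respect to $E$, we have $\rank(M_u)=n-\dim\Rad(\chi_u)$, so the displayed identity gives $\rank(M_u)=(n-1)-\dim\Rad(h_u)$, which is precisely the rank of the Gram matrix of $h_u$ on the $(n-1)$-dimensional space $V/\langle u\rangle$, as already observed. Therefore $[u]$ is an $H$-pole iff $\dim\Rad(h_u)\geq 1$ iff $\dim\Rad(\chi_u)\geq 2$ iff $\rank(M_u)\leq n-2$. The complementary case is consistent: by Corollary~\ref{rank Mu-cor} one always has $\rank(M_u)\leq n-1$, so $[u]$ fails to be an $H$-pole exactly when $\rank(M_u)=n-1$.

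There is essentially no obstacle here: the corollary follows immediately from the relation $\Rad(h_u)=\Rad(\chi_u)/\langle u\rangle$ together with Proposition~\ref{rank Mu-prop} (via Corollary~\ref{rank Mu-cor}). The only point worth making explicit is that an alternating matrix has even rank, so when $n$ is even the bound $\rank(M_u)\leq n-1$ already forces $\rank(M_u)\leq n-2$; this recovers the fact noted in Section~\ref{geom of poles} that every point of $\PG(V)$ is a pole when $n$ is even.
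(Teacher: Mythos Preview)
Your proof is correct and follows exactly the approach the paper intends: the corollary is stated without proof as a ``straightforward'' consequence of the identity $\Rad(h_u)=\Rad(\chi_u)/\langle u\rangle$ recorded just before Proposition~\ref{rank Mu-prop}, together with Corollary~\ref{rank Mu-cor}. Your explicit verification that $u\in\Rad(\chi_u)$ and the rank--nullity bookkeeping simply spell out what the paper leaves implicit.
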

Note that the matrix $M_u$ is antisymmetric; hence its rank must be an even number.
By Corollary~\ref{corollary- H pole} it is clear that if $n$ is even then every point of $\PG(V)$ is a $H$-pole and this holds for any hyperplane $H$ of $\cG_3(V).$
More precisely, the degree of the point $[u]$  is $\delta(u)=(n-1)-\rank(M_u).$
So, it is straightforward to see that the set of all the $H$-poles of degree at least $t$  is either the whole of $\PG(V)$ or the determinantal variety of $\PG(V)$ described by the condition
 $\rank(M_u)\leq (n-1)-t\leq n-2$;
 see \cite[Lecture 9]{harris} for some properties of these varieties.
 Furthermore all entries of $M_u$ are linear homogeneous polynomials in the coordinates of $u$; so the condition  $\rank(M_u)\leq n-2$ provides algebraic conditions on the coordinates of $u$ for $[u]$
 to be a $H$-pole.

In Table~\ref{tab2} and Table~\ref{tab3} of Appendix~\ref{appendix} we have explicitly written the matrices $M_u$ associated to the trilinear forms $h$ of type $T_i$ of Table~\ref{Tab F} where $u=(u_i)_{i=1}^ n.$
To simplify the notation, when $\rank(h)<\dim(V)$, we have just written the $\rank(h)\times\rank(h)$-matrix associated to $h_u|_{V/\Rad(h)}.$

To determine the elements of the upper radical of $H$, namely the lines $\ell=[ x,y]$ of $\PG(V)$ with the property that any plane through them is in $H$, we need to determine  conditions on $x$ and $y$ such that the linear functional
 \begin{equation}
   \label{eq2}
   \tilde{h}_{xy}\colon V\rightarrow \KK,\,\,\tilde{h}_{xy}(u)=h(u\wedge x\wedge y)
\end{equation}
  is null.
 To do this, it is sufficient to require that $\tilde{h}_{xy}$ annihilates on the basis vectors of $V$, i.e. $\tilde{h}_{xy}(e_i)=0,$ for every $i=1,\dots, n$.

 \subsection{A determinantal variety}
 \label{ssec2}
Let $h$ be a trilinear form associated to the hyperplane $H$ and for any $u\in V$, let $\chi_u$ be the alternating bilinear form as in Equation~\eqref{eq Mu} whose representative matrix is $M_u.$
With $1\leq i\leq n$, denote by $M_u^{(i)}$ the principal submatrix of $M_u$ obtained by
deleting its $i$-th row and its $i$-th column.
The matrix $M_u^{(i)}$ is a $(n-1)\times (n-1)$-antisymmetric matrix whose entries are linear functionals defined  over $\KK$;   so,
  its determinant is a polynomial of degree $n-1$ in the unknowns $u_1,\ldots,u_n$
  which is a square in the ring $\KK[u_1,\ldots,u_n]$, that is
  there exists a polynomial  $d_i(u_1,\ldots,u_n)$ with $\deg d_i(u_1,\ldots,u_n)=(n-1)/2$ such that
    \begin{equation}\label{edi}
     \det M_u^{(i)}=(d_i(u_1,\ldots,u_n))^2.
     \end{equation}
Define $g_i(u_1,\ldots,u_n)$ to be the polynomial in $\KK[u_1,\ldots,u_n]$ such that
$$d_i(u_1,\ldots,u_n)=u_i^{\alpha_i}g_i(u_1,\ldots,u_n)$$ where $\alpha\in{\mathbb N}$ and
$u_i^{\alpha_i+1}$ does not divide $d_i(u_1,\ldots,u_n)$.

\begin{theorem}
\label{lp}
The set $P(H)$ of $H$-poles is either the whole pointset of $\PG(V)$ or there exists an index $i$, $1\leq i\leq n$,  such that $P(H)$
is an algebraic hypersurface of $\PG(V)$ with equation $g_i(u_1,\ldots,u_n)=0$.
\end{theorem}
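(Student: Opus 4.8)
The plan is to show that the condition $\rk(M_u)\le n-2$, which by Corollary~\ref{corollary- H pole} characterizes $H$-poles, can be replaced by the single polynomial equation $g_i(u_1,\dots,u_n)=0$ once a suitable index $i$ is fixed. First I would dispose of the trivial alternative: if every point is an $H$-pole (which happens precisely when $n$ is even, or more generally when the determinantal condition is identically satisfied), then $P(H)=\PG(V)$ and there is nothing more to prove. So assume $P(H)\neq\PG(V)$; in particular $n$ is odd, there is a smooth point $[u_0]$, and each $M_u^{(i)}$ is a generically nonsingular $(n-1)\times(n-1)$ antisymmetric matrix, whence $\det M_u^{(i)}$ is not the zero polynomial and (being the square of the Pfaffian of $M_u^{(i)}$, which lives in $\KK[u_1,\dots,u_n]$) equals $d_i(u_1,\dots,u_n)^2$ with $\deg d_i=(n-1)/2$, exactly as in~\eqref{edi}.

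The key step is the equivalence, for a fixed coordinate index $i$ and a point $[u]$ with $u_i\neq 0$:
\[
\rk(M_u)\le n-2 \iff \det M_u^{(i)}=0.
\]
The nontrivial direction uses Proposition~\ref{rank Mu-prop}: when $u_i\neq 0$, the $i$-th row and the $i$-th column of $M_u$ are linear combinations of the remaining rows and columns, so deleting them does not lower the rank, i.e. $\rk(M_u^{(i)})=\rk(M_u)$. Hence on the affine chart $u_i\neq 0$ the pole condition $\rk(M_u)\le n-2$ is equivalent to $\rk(M_u^{(i)})\le n-2<n-1$, i.e. to $\det M_u^{(i)}=0$, i.e. to $d_i(u)^2=0$, i.e. to $d_i(u)=0$. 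Writing $d_i=u_i^{\alpha_i}g_i$ with $u_i\nmid g_i$, on the chart $u_i\neq 0$ this is equivalent to $g_i(u)=0$. Thus $P(H)$ and the hypersurface $\{g_i=0\}$ coincide on the open set $u_i\neq 0$; since $P(H)$ is closed (a determinantal variety by Corollary~\ref{corollary- H pole}) and so is $\{g_i=0\}$, and $P(H)\neq\PG(V)$, I would argue that this forces equality everywhere. To make the last step clean I would choose $i$ carefully: pick $i$ so that the smooth point $[u_0]$ (which exists) has $i$-th coordinate nonzero, equivalently so that $u_i$ does not vanish identically on $P(H)$; then $\{g_i=0\}$ is a proper hypersurface (it does not contain $[u_0]$), and $P(H)\cap\{u_i\ne 0\}$ is dense in $P(H)$ while $\{g_i=0\}\cap\{u_i\neq 0\}$ is dense in $\{g_i=0\}$ — unless $g_i$ has $u_i$-divisible components, which is excluded by construction. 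Taking closures of the two equal dense subsets yields $P(H)=\{g_i=0\}$.

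The main obstacle is exactly this passage from an equality of sets on the chart $u_i\neq 0$ to an equality of the full projective varieties. One has to rule out the possibility that $P(H)$ or $\{g_i=0\}$ picks up extra components lying entirely in the hyperplane $u_i=0$. For $\{g_i=0\}$ this is handled definitionally by stripping off the factor $u_i^{\alpha_i}$; for $P(H)$ it requires knowing that $P(H)\cap\{u_i\neq 0\}$ is Zariski-dense in $P(H)$, which in turn is where the choice of $i$ adapted to a smooth point enters. A small subtlety worth a remark: a priori different choices of $i$ give different polynomials $g_i$, but on the overlap $\{u_iu_j\neq 0\}$ the varieties $\{g_i=0\}$ and $\{g_j=0\}$ agree, so all valid choices define the same hypersurface $P(H)$; the statement only claims the existence of one good $i$, so I would not belabor this beyond a sentence. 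With these points in place the proof is essentially the chain of equivalences above plus a density argument, and no heavy computation is needed.
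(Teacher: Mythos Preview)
Your overall architecture is right --- use Proposition~\ref{rank Mu-prop} to identify $\rank M_u$ with $\rank M_u^{(i)}$ on the chart $u_i\neq 0$, reduce to a single Pfaffian, and then pass to the closure --- and this is exactly what the paper does. But your criterion for selecting the index $i$ is wrong, and this is a genuine gap rather than a cosmetic one.

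You write: ``pick $i$ so that the smooth point $[u_0]$ has $i$-th coordinate nonzero, equivalently so that $u_i$ does not vanish identically on $P(H)$; then \ldots\ $P(H)\cap\{u_i\ne 0\}$ is dense in $P(H)$.'' Neither the equivalence nor the conclusion holds. The condition $(u_0)_i\neq 0$ for a smooth $u_0$ says nothing about whether $P(H)$ has a component inside $\Pi_i:\,u_i=0$; and ``$u_i$ does not vanish identically on $P(H)$'' means $P(H)\not\subseteq\Pi_i$, whereas what you need for the density step is $\Pi_i\not\subseteq P(H)$ (so that no component of the \emph{hypersurface} $P(H)$ lies in $\Pi_i$). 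Concretely, take type $T_5$: here $P(H)=\{u_1u_4=0\}$, and every smooth point has $u_1\neq 0$ and $u_4\neq 0$, so your rule allows $i=1$. But then $\Pi_1\subseteq P(H)$, one computes $d_1=u_1^2u_4$, hence $g_1=u_4$, and $\{g_1=0\}=\Pi_4\subsetneq P(H)$. Your closure argument therefore produces the wrong variety.

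The paper avoids this by invoking the result of Draisma--Shaw \cite{DS10} that $P(H)$ is already known to be a hypersurface of degree $(n-3)/2$; since there are $n>(n-3)/2$ coordinate hyperplanes, some $\Pi_i$ is not contained in $P(H)$, and that is the $i$ one picks. The same external input is what legitimises the closure step: knowing $P(H)$ is a hypersurface, $\Pi_i\not\subseteq P(H)$ forces every irreducible component of $P(H)$ to meet $\{u_i\neq 0\}$. You do not invoke \cite{DS10}, and without it you have neither a correct way to choose $i$ nor a justification that $P(H)$ is pure of codimension one (``determinantal variety'' alone does not give this). The paper also shows directly that $u_i\mid d_i$ always, by checking that $\det M_u^{(i)}$ vanishes whenever $u_i=0$; this makes the factorisation $d_i=u_i d_i'$ with $\deg d_i'=(n-3)/2$ explicit.
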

\begin{proof}
  Suppose $P(H)\neq\PG(V)$.
  By \cite{DS10}, $P(H)$ is an algebraic hypersurface admitting an equation of degree $(n-3)/2$; so
  $P(H)$ contains at most $(n-3)/2$ coordinate hyperplanes of the form $\Pi_j: u_j=0$.
  Then, there exists $i$ with $1\leq i\leq n$
   such that
   $\Pi_i\not\subseteq P(H)$.
   By Corollary~\ref{corollary- H pole}, $[u]\in \PG(V)$ is a $H$-pole if and only if $\rank(M_u)\leq n-2.$
   If $n$ is even, this always happens. So, assume $n$ odd.
   We shall work over the algebraic closure $\overline{\KK}$ of $\KK$.
  Take $u=(u_1,\ldots,u_n)$ with $u_j\in\overline{\KK}$. We can
  regard $u$ as
  a vector in coordinates
  with respect to the basis induced by $E$ on
  $\overline{V}:=V\otimes\overline{\KK}$.
  In any case, the matrix $M_u$ is antisymmetric and its
  entries are  homogeneous linear functionals in
  $u_1,\ldots,u_n$ defined  over the field $\KK$.

  Consider the points $[u]$ with $u_i\not=0.$
  By Proposition~\ref{rank Mu-prop},
  the $i$-th row/column of $M_u$ is a linear combination
  of the remaining $n-1$ rows/columns. So, $\rank M_u=\rank M_u^{(i)}$.

  Let $d_i(u_1,\ldots,u_n)$ be as in Equation~\eqref{edi}. We now show that
  $$d_i(u_1,\ldots,u_{i-1},0,u_{i+1},\ldots,u_n)=0$$ for all $u_1,\ldots,u_{i-1},u_{i+1},\ldots,u_n\in\overline{\KK}$.
  Indeed, when
  $u_i=0$, by Proposition~\ref{rank Mu-prop},
  there exists a column $C_j$ of $M_u$, $u_j\not=0$, such that
\[C_j=-\sum_{\tiny{\begin{array}{l}
k=1\\
k\not=j
\end{array}}}^{n}\frac{u_k}{u_j}C_k.\]
So, the $j$-th column of $M_u^{(i)}$ is also a linear combination of the other columns of $M_u^{(i)}.$ Hence
 $\det M_u^{(i)}=0=(d_i(u_1,\ldots,0,\ldots,u_{n}))^2$.

  Since $\overline{\KK}$ is algebraically closed, we have
  \[ d_i(u_1,\ldots,u_n)= u_i d'_i(u_1,\ldots,u_n), \]
  with $d'_i(u_1,\ldots,u_n)$ a polynomial in $\KK[u_1,\ldots,u_n]$
  with $\deg d'_i=(n-3)/2$. We remark that the unknowns
  of the polynomials may assume their values in $\overline{\KK}$ but the  coefficients are
  all in $\KK.$
  By Corollary~\ref{corollary- H pole}, a point $[u]\in\PG(\overline{V})\setminus\Pi_i$ (i.e. $u_i\neq0$)
  is a $H$-pole if and only if  $\det M_u^{(i)}=0$, i.e. $d'_i(u_1,\ldots,u_n)=0.$

  Denote by $\overline{\Gamma}_i$ the algebraic variety
  (over $\overline{\KK}$) of equation $d_i'(u_1,\ldots,u_n)=0$.
  Since we are assuming $\Pi_i\not\subseteq P(H)$ and
   $P(H)\setminus\Pi_i=\overline{\Gamma}_i\setminus\Pi_i$, we have
  \[ P(H)=C(P(H)\setminus\Pi_i)=C(\overline{\Gamma}_i\setminus\Pi_i), \]
  where $C(X)$ denotes the projective closure of $X$ in $\PG(\overline{V})$ with $\Pi_i$
   regarded as the hyperplane at infinity.
   Note that since $P(H)$ is an algebraic variety of $\PG(\overline{V})$ which does
   not contain $\Pi_i$, the points
   of $P(H)\cap\Pi_i$ are exactly those of the projective closure $C(P(H)\setminus\Pi_i)\cap\Pi_i$.
   The same applies to the points at infinity of the affine variety $\overline{\Gamma}_i\setminus\Pi_i$.

  Suppose $u_i^{\beta}|d_i'(u_1,\ldots,u_n)$ and $u_i^{\beta+1}\not| d_i'(u_1,\ldots,u_n)$ for
  $\beta\in{\mathbb N}$.

  Then,
  $d_i'(u_1,\ldots,u_n)=u_i^{\beta}g_i(u_1,\ldots,u_n)$ and
  $g_i(u_1,\ldots,u_n)=0$ is an equation for $C(\overline{\Gamma}_i\setminus\Pi_i)=P(H)$.
  This completes the proof.
\end{proof}
Observe that if $\alpha_i=1$, then $g_i(u_1,\ldots,u_n)=0$ is exactly an equation of degree $(n-3)/2$ for
$P(H)$.

If $n$ is even then each point of $\PG(V)$ is a $H$-pole. Likewise, when $n$ is odd and $H$ is defined by a trilinear form $\chi$ of rank less than $n$, then also each point
of $\PG(V)$ is a $H$-pole. Indeed, whenever $R_{\downarrow}(H)=\Rad(\chi)$ is
non-trivial,  for any $[u]\in \PG(V)$ there exists a line $\ell$ through $[u]$ in $R^{\uparrow}(H)$ which
meets $\Rad(\chi)$ and so $[u]$ is a $H$-pole.
In any case the above conditions, while sufficient, are not in general necessary;
in fact, in Section~\ref{decomposable}
we shall provide a construction which might lead to alternating forms of rank $n$
with $n$ odd and $R_{\downarrow}(H)=\emptyset$ such that all points of $\PG(V)$ are $H$-poles.

Note that
Theorem~\ref{lp} shows that the variety of the $H$-poles
in $\PG(V)$ admits at least one equation over $\KK$ of degree at most
$(n-3)/2$; this does not mean that $(n-3)/2$ is the minimum degree
for such an equation or that the polynomial $g_i(u_1,\ldots,u_n)$ generates
the radical ideal of such variety (even over $\KK$).
For instance, in the case of symplectic hyperplanes,
see Section~\ref{symp hyp sec}, the variety of poles is always a
hyperplane of $\PG(V)$ and so it admits an equation of degree $1$ for any odd $n$.

\subsection{A family of reducible hyperplanes}
\label{rehy}
It is an interesting problem to investigate which algebraic varieties
might arise as set of $H$-poles; as noted before such varieties will always be
skew-symmetric determinantal varieties \cite{Ha}.
We leave the development of this study to a further paper. However,
with this aim in mind, we
give here a construction for hyperplanes whose variety of poles are reducible.
\begin{theorem}
  \label{c1}
  Suppose $V=V_1+V_2$ with $(e_1,\ldots,e_{n_1})$ and $(e_{n_1},\ldots,e_n)$ bases of
  $V_1$ and $V_2$ respectively.  For $i=1,2$ let $H_i$  be a hyperplane of $\cG_3(V_i)$ whose $H_i$-poles  in $\PG(V_i)$
  satisfy respectively the equations $f_1(u_1,\ldots,u_{n_1})=0$ and $f_2(u_{n_1},\ldots,u_n)=0$.
 Then there exists a hyperplane $H$ of $\cG_3(V)$ whose set of $H$-poles defines a variety of $\PG(V)$ with equation
  $f(u_1,\ldots,u_n)=0$ where
  \[ f(u_1,\ldots,u_n):=u_{n_1}\cdot f_1(u_1,\ldots,u_{n_1})f_2(u_{n_1},\ldots,u_{n}). \]
\end{theorem}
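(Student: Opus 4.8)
The strategy is to build $H$ explicitly from $H_1$ and $H_2$ by a "gluing along a common vector" construction, patterned on the extension/expansion machinery of Section~\ref{sec 3}, and then to compute the matrix $M_u$ of the resulting trilinear form in block form. Write $V_1 = \langle e_1,\dots,e_{n_1}\rangle$ and $V_2 = \langle e_{n_1},\dots,e_n\rangle$, so that $V_1\cap V_2 = \langle e_{n_1}\rangle =: \langle w\rangle$ and $V = V_1\oplus V_2/\!\!\sim$ has dimension $n_1 + (n-n_1+1) - 1 = n$ exactly as the index set $\{1,\dots,n\}$ suggests. Let $h_1\in(\bigwedge^3 V_1)^*$ and $h_2\in(\bigwedge^3 V_2)^*$ be trilinear forms defining $H_1$ and $H_2$. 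I would define $h\in(\bigwedge^3 V)^*$ by declaring, on the basis $\underline{ijk}$, that $h$ agrees with $h_1$ on triples with all indices in $\{1,\dots,n_1\}$, agrees with $h_2$ on triples with all indices in $\{n_1,\dots,n\}$, and vanishes on every "mixed" triple $\underline{ijk}$ with some index $< n_1$ and some index $> n_1$. (One should check this $h$ is nontrivial and hence defines a genuine hyperplane $H$ of $\cG_3(V)$; nontriviality is immediate since $h_1\ne 0$.)

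The key computational step is to write down $M_u$ for $u = (u_1,\dots,u_n)$. By construction $\chi_u(x,y) = h(u\wedge x\wedge y)$ splits according to the block structure: writing $u = u' + u_{n_1}w + u''$ with $u'\in\langle e_1,\dots,e_{n_1-1}\rangle$ and $u''\in\langle e_{n_1+1},\dots,e_n\rangle$, the form $\chi_u$ is supported on $V_1\times V_1$ and $V_2\times V_2$ with no $V_1'\times V_2''$ cross terms (the mixed $\underline{ijk}$ vanish). Concretely, $M_u$ is a block matrix which, after accounting for the shared row/column $n_1$, looks like
\[
M_u \;=\;
\begin{pmatrix}
A(u_1,\dots,u_{n_1}) & 0\\
0 & B(u_{n_1},\dots,u_n)
\end{pmatrix}
\]
in a suitable sense, where $A$ is the matrix of the bilinear form associated to $h_1$ at the point $(u_1,\dots,u_{n_1})\in V_1$ and $B$ the analogous matrix for $h_2$ at $(u_{n_1},\dots,u_n)\in V_2$, the two blocks overlapping exactly in the entries of index $n_1$. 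Then, following the method of Section~\ref{ssec2}, I would compute $\det M_u^{(i)}$ for a well-chosen $i$ (e.g. an index $i\le n_1$, $i\ne n_1$, with $\Pi_i\not\subseteq P(H_1)$): deleting row/column $i$ from $M_u$ leaves a block matrix whose determinant factors as $\det A^{(i)}\cdot\det B$. Extracting square roots (Pfaffians) as in Equation~\eqref{edi}, and using Theorem~\ref{lp} applied to $H_1$ in $\PG(V_1)$ and to $H_2$ in $\PG(V_2)$, this yields $d_i(u)$ proportional to a power of $u_{n_1}$ times $f_1(u_1,\dots,u_{n_1})\cdot f_2(u_{n_1},\dots,u_n)$. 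The factor $u_{n_1}$ appears because the shared index $n_1$ makes $M_u$ drop rank whenever $u_{n_1} = 0$ (the row $n_1$ then decouples into two dependent pieces), exactly the phenomenon quantified in Proposition~\ref{rank Mu-prop}.

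Finally, stripping off the appropriate power $u_i^{\alpha_i}$ as in the definition of $g_i$ preceding Theorem~\ref{lp}, one reads off that $P(H)$ has equation $u_{n_1}\, f_1(u_1,\dots,u_{n_1})\, f_2(u_{n_1},\dots,u_n) = 0$, which is the claimed $f(u_1,\dots,u_n)$. The main obstacle I anticipate is bookkeeping at the shared coordinate $n_1$: one must verify carefully that the Pfaffian of the glued matrix genuinely splits as the product of the two sub-Pfaffians times $u_{n_1}$ (and not some higher power, or with an extra factor), and that the index $i$ can be chosen so that the $u_i$-purging step removes exactly the spurious powers and leaves $u_{n_1}$ intact — i.e. that $i\ne n_1$ is available, which requires $\Pi_i\not\subseteq P(H_1)$ for some $i<n_1$, guaranteed by Theorem~\ref{lp} for $H_1$. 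A secondary point to check is that the "mixed vanishing" choice for $h$ does not accidentally force $\Rad(\chi)$ larger than expected, but this too follows from the block structure of $M_u$.
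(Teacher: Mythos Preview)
Your construction of $h$ as $h_1+h_2$ (with the ``mixed'' triples $\underline{ijk}$ set to zero) is exactly the one the paper uses, and the resulting block shape of $M_u$ with overlap at index $n_1$ is correctly identified. The gap is in the factorization step. For $i<n_1$ the matrix $M_u^{(i)}$ still contains the shared row and column $n_1$, which couples the two blocks; so it is \emph{not} block-diagonal, and your asserted factorization $\det M_u^{(i)}=\det A^{(i)}\cdot\det B$ is false. Indeed $B=\bar M_2$ is an antisymmetric matrix of odd size $n-n_1+1=\dim V_2$, so $\det B\equiv 0$, which would make your formula identically zero. (A Pfaffian/matching argument does give a factorization for $i<n_1$, but the correct second factor is $\det B^{(n_1)}$, not $\det B$; you would then still have to track powers of both $u_i$ and $u_{n_1}$ and invoke the hypothesis $\Pi_{n_1}\not\subseteq P(H_2)$, none of which you address.)

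The paper avoids all of this by making precisely the choice you excluded: it deletes the \emph{shared} index $i=n_1$. Then $M_u^{(n_1)}$ is genuinely block-diagonal, $M_u^{(n_1)}=\mathrm{diag}(M_1,M_2)$ with $M_1=\bar M_1^{(n_1)}$ and $M_2=\bar M_2^{(n_1)}$, and $\det M_u^{(n_1)}=\det M_1\cdot\det M_2$ factors on the nose. On the affine patch $u_{n_1}\neq 0$ one then reads off $P(H)\setminus\Pi_{n_1}=\{f_1f_2=0\}\setminus\Pi_{n_1}$ directly from the hypothesis on $f_1,f_2$. Your worry that taking $i=n_1$ would ``strip the $u_{n_1}$ factor'' is misplaced: in the paper's argument the factor $u_{n_1}$ in the final equation does \emph{not} come from the determinant at all, but from a separate set-theoretic verification (via Proposition~\ref{rank Mu-prop}) that the entire hyperplane $\Pi_{n_1}:u_{n_1}=0$ lies in $P(H)$. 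Putting the two pieces together gives $P(H)=\Pi_{n_1}\cup\{f_1f_2=0\}$, hence the equation $u_{n_1}f_1f_2=0$.
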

\begin{proof}
 For $i=1,2$ denote by  $\overline{h}_i$ the trilinear form on $V_i$  defining $H_i$ and consider the extension $h_i\colon V\times V\times V\rightarrow \KK$ given by
\[h_i(x,y,z)=\overline{h}_i(\pi_i(x),\pi_i(y),\pi_i(z))\]
where $\pi_1\colon V\rightarrow V_1$ is the projection on $V_1$ along $\langle e_{n_1+1},\dots e_n\rangle$ and
$\pi_2\colon V\rightarrow V_2$ is the projection on $V_2$ along $\langle e_{1},\dots e_{n_1-1}\rangle.$

Put $h:=h_1+h_2$ and let $H$ be the hyperplane of $\cG_3(V)$ defined by $h.$
For any $[u]\in \PG(V)$, let $\chi_u$ as in Equation~\eqref{eq Mu} be the bilinear alternating form induced by $h$ and represented by the matrix $M_u$ with respect to the basis $(e_1,\ldots,e_{n_1},\ldots,e_n)$ of $V.$ By construction, the matrix $M_u$ has the following structure:
\[M_u=\begin{pmatrix}
M_1 &(v^1_{n_1}) & 0\\
-{(v^1_{n_1})}^T &0& -{(v^2_{n_1})}^T\\
0&(v^2_{n_1}) & M_2\\
\end{pmatrix}\]
where $\bar{M}_1:=\begin{pmatrix}
M_1 &(v^1_{n_1})  \\
-{(v^1_{n_1})}^T &0
\end{pmatrix}$ is the $n_1\times n_1$-matrix representing $\overline{h}_1$ and $\bar{M}_2=\begin{pmatrix}
0& -{(v^2_{n_1})}^T\\
(v^2_{n_1}) & M_2\\
\end{pmatrix}$ is the $(n-n_1+1)\times (n-n_1+1)$-matrix representing $\overline{h}_2.$
Note that $(v^1_{n_1})$ and $(v^2_{n_1})$ are suitable columns with entries respectively in
the rings $\KK[u_1,\ldots,u_{n_1}]$ and $\KK[u_{n_1},\ldots,u_{n}]$.

Suppose $u_{n_1}=0$. Since the entries of $\bar{M}_1$ and $\bar{M}_2$ are respectively
linear functionals in $u_1,\ldots,u_{n_1}$ and $u_{n_1},\ldots,u_{n}$ if
all entries $u_i$ of $u$ with $i\geq n_1$ are null, then $\bar{M}_2$ is a zero matrix
and $\rank M_u\leq n-2$. Likewise if all entries $u_i$ of $u$ with $i\leq n_1$ are
zero, then $\bar{M}_1$ is the zero matrix and $\rank M_u\leq n-2$.
Assume now that $u_{n_1}=0$ and that there exist $i,j$ with $i<n_1$ and $j>n_1$ such that
$u_i\neq0\neq u_j$. Then,
by Proposition~\ref{rank Mu-prop}, the first $(n_1-1)$ columns of $\bar{M}_1$ and the last $n-n_1$ columns of
$\bar{M}_2$ are a linearly dependent set; in particular, $\rank M_u\leq n-2$.
So we have proved that the hyperplane $\Pi_{n_1}:u_{n_1}=0$ is
contained in the variety of poles $P(H)$.

By construction, $\rank \bar{M}_1\leq n_1-2$ if and only if $f_1(u_1,\ldots,u_{n_1})=0$ and
$\rank \bar{M}_2\leq n-n_1-1$ if and only if $f_2(u_{n_1},\ldots,u_n)=0$.
Let $\Delta$ be the variety of equation $f_1(u_1,\ldots,u_{n_1})f_2(u_{n_1},\ldots,u_n)=0$.
Observe that $\Delta\setminus\Pi_{n_1}\subseteq P(H)\setminus\Pi_{n_1}$, as
$[u]\in\Delta\setminus\Pi_{n_1}$ implies
that either $\rank\bar{M}_1^{(n_1)}\leq\rank\bar{M}_1\leq n_1-2$
or $\rank\bar{M}_2^{(n_1)}\leq\rank\bar{M}_2\leq n-n_1-1$ and, by Proposition~\ref{rank Mu-prop} ($u_{n_1}\not=0$),
the $n_1$-th column of $M$ is a linear combinations of the columns of $\bar{M}_1^{(n_1)}$
as well as a linear combination of the columns of $\bar{M}_2^{(n_1)}$ (so it does not contribute to the rank); so $\rank M_u\leq n-2$
and $[u]\in P(H)\setminus\Pi_{n_1}$.
Conversely, suppose $[u]\in P(H)$ with $u_{n_1}\neq0$. Then, by Theorem~\ref{lp},
\[ 0=\det M_{u}^{(n_1)}=\det M_1\cdot\det M_2=\det \bar{M}_1^{(n_1)}\cdot\det \bar{M}_2^{(n_1)}. \]
If $[u]\in\PG(V)\setminus\Pi_{n_1}$, again by Theorem~\ref{lp} applied to $V_1$ and $V_2$ we have
$\det\bar{M}_1^{(n_1)}=0$ if and only if $f_1(u_1,\ldots,u_{n_1})=0$ and
$\det\bar{M}_2^{(n_1)}=0$ if and only if $f_2(u_{n_1},\ldots,u_{n})=0$.
So
$P(H)\setminus\Pi_{{n_1}}\subseteq\Delta\setminus\Pi_{n_1}$, whence
\[ P(H)\setminus\Pi_{{n_1}}=\Delta\setminus\Pi_{n_1} \]
Since $\Pi_{n_1}\subseteq P(H)$, we have
\[ P(H)=(P(H)\setminus\Pi_{n_1})\cup\Pi_{n_1}=(\Delta\setminus\Pi_{n_1})\cup\Pi_{n_1} \]
and, consequently $f(u_1,\ldots,u_n)=u_{n_1}\cdot f_1(u_1,\ldots,u_{n_1})\cdot f_2(u_{n_1},\ldots,u_n)$
is an equation for $P(H)$.
\end{proof}
\begin{corollary}
\label{cch}
  Let $V$ be a vector space of odd dimension $n\geq 5$ over a field $\KK$.
  Then there exists a  hyperplane $H$ of $\cG_3(V)$  whose set of $H$-poles is the
  union of $(n-3)/2$ distinct hyperplanes of $\PG(V)$.
\end{corollary}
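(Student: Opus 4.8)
The plan is to realise $H$ by a chain of applications of Theorem~\ref{c1}, starting from one five-dimensional block on which the variety of poles is already a hyperplane and then attaching $(n-5)/2$ three-dimensional blocks carrying no poles at all, each attachment adding exactly one new linear factor to the equation of the pole variety. Put $t:=(n-5)/2$, fix a basis $(e_1,\dots,e_n)$ of $V$, and set $W_0:=\langle e_1,\dots,e_5\rangle$ and $W_i:=\langle e_{2i+3},e_{2i+4},e_{2i+5}\rangle$ for $1\le i\le t$; then $W_{i-1}\cap W_i=\langle e_{2i+3}\rangle$, $V=W_0+W_1+\dots+W_t$, and at every stage the partial sum $W_0+\dots+W_{k-1}$ together with $W_k$ is a decomposition of the shape demanded by Theorem~\ref{c1}.

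On $W_0$ I would take the symplectic hyperplane $H_0$ defined by $h_0:=\underline{123}+\underline{145}=e^1\wedge\omega$, where $\omega=e^2\wedge e^3+e^4\wedge e^5$ is a non-degenerate alternating form on $\langle e_2,e_3,e_4,e_5\rangle$ (an instance of the construction of Section~\ref{symp hyp sec}). A direct computation of the matrix $M_u$ of Equation~\eqref{eq Mu} for $h_0$ gives $\rank M_u=2$ when $u_1=0$ and $\rank M_u=4$ otherwise, so by Corollary~\ref{corollary- H pole} the pole variety $P(H_0)$ is exactly the hyperplane $\Pi_1\colon u_1=0$ of $\PG(W_0)$, i.e.\ it has equation $u_1=0$. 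On each $W_i$ with $i\ge1$ I would take the hyperplane $H_i$ defined by the type $T_1$ functional $\underline{(2i+3)(2i+4)(2i+5)}$; since $\dim W_i=3$, Corollary~\ref{corollary- H pole} shows that a point of $\PG(W_i)$ is an $H_i$-pole only if its associated $3\times3$ skew-symmetric matrix vanishes, which is impossible for a non-zero vector, so $P(H_i)=\emptyset$.

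Now I would iterate Theorem~\ref{c1}: at the $k$-th step, $1\le k\le t$, apply it with $V_1:=W_0+\dots+W_{k-1}$ (so $n_1=\dim V_1=2k+3$), $V_2:=W_k$, $H_1$ the hyperplane constructed at the previous step and $H_2:=H_k$. Writing $H^{(0)}:=H_0$ and letting $H^{(k)}$ be the hyperplane of $\cG_3(V_1+V_2)$ produced by the theorem, one obtains that $P(H^{(k)})$ has equation $u_{2k+3}\cdot f^{(k-1)}\cdot g_k=0$, where $f^{(k-1)}$ is the equation of $P(H^{(k-1)})$ and $g_k$ that of $P(H_k)$; as $P(H_k)=\emptyset$, $g_k$ is a non-zero constant here, and by induction $P(H^{(k)})$ has equation $u_1\prod_{j=1}^{k}u_{2j+3}=0$, a product of $k+1$ distinct coordinates. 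After $t$ steps, $H:=H^{(t)}$ is a hyperplane of $\cG_3(V)$ whose pole variety $P(H)=\Pi_1\cup\Pi_5\cup\Pi_7\cup\dots\cup\Pi_{2t+3}$ is the union of $t+1=(n-3)/2$ pairwise distinct hyperplanes of $\PG(V)$, as required.

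The one point deserving care is the applicability of Theorem~\ref{c1} when $H_2$ has empty pole variety (``$f_2$'' a non-zero constant). Reinspecting its proof, the hypothesis on $H_2$ enters only through the equivalence ``$\rank\bar{M}_2\le\dim V_2-2\iff f_2=0$'', which for $\dim V_2=3$ reads ``$\bar{M}_2=0\iff u_{n_1}=u_{n_1+1}=u_{n_1+2}=0$'', a condition with no projective solution in $\PG(V_2)$ — matching $P(H_2)=\emptyset$; the rest of the argument (in particular $\det M_u^{(n_1)}=\det\bar{M}_1^{(n_1)}\cdot\det\bar{M}_2^{(n_1)}$ with $\det\bar{M}_2^{(n_1)}\ne0$ for $u_{n_1}\ne0$) then goes through unchanged and yields $P(H)=\Pi_{n_1}\cup\{f_1=0\}$, which is what the iteration uses. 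Distinctness of the hyperplanes in the final union is ensured by our choice of the symplectic block (pole-hyperplane $\Pi_1$) and of the successive gluing vectors $e_5,e_7,\dots,e_{2t+3}$, all of pairwise distinct index.
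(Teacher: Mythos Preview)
Your proof is correct and follows essentially the same approach as the paper's own argument: an induction via Theorem~\ref{c1}, starting from a five-dimensional symplectic block and successively attaching three-dimensional blocks carrying a type-$T_1$ form with empty pole set, each attachment contributing one new linear factor. The only differences are cosmetic --- you use $\underline{123}+\underline{145}$ (pole hyperplane $u_1=0$) as the base instead of the paper's $\underline{123}+\underline{345}$ (pole hyperplane $u_3=0$), and you add an explicit remark on the applicability of Theorem~\ref{c1} in the degenerate case $P(H_2)=\emptyset$, which the paper uses without comment.
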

\begin{proof}
We proceed by induction on $n$ odd.
  If  $n=5$, consider the hyperplane of $\cG_3(V)$  defined by the trilinear form  $h:=\underline{123}+\underline{345}.$ It is easy to verify that its poles are all the points of the hyperplane of equation $u_3=0.$

By induction hypothesis, suppose that the thesis holds for vector spaces of odd dimension $n$. We shall prove it also holds for vector spaces of (odd) dimension $n+2.$
Let $V$ with $\dim(V)=n+2$ and let $(e_i)_{i=1,\dots, n+2}$ be a given basis of $V.$
Put $V_1:=\langle e_i\rangle_{1\leq i\leq n}$ and $V_2:=\langle e_{n},e_{n+1},e_{n+2}\rangle.$ Clearly $V=V_1+V_2.$
As $\dim(V_1)=n$ we can apply the induction hypothesis. So, there exists a trilinear form $\overline{h}_1$ on $V_1$ (defining a hyperplane $H_1$ of $\cG_3(V_1)$) such that its set of poles is the union of $(n-3)/2$ distinct hyperplanes of $\PG(V)$. Equivalently, we can assume without loss of generality that any $H_1$-pole satisfies the equation $g_1(u_1,\ldots,u_n)=0$ where
$g_1(u_1,\ldots,u_n):=\prod_{i=1}^{(n-3)/2} u_{2i+1}.$

Let $\overline{h}_2$ be the trilinear form on $V_2$ defined by $\overline{h}_2=\underline{(n)(n+1)(n+2)}$. Clearly
$\overline{h}_2$ has no pole in $\PG(V_2).$

By (the proof of) Theorem~\ref{c1}, we can consider the hyperplane $H$ of $\cG_3(V)$ defined by the sum of the extensions $h_1$ and $h_2$ to $V$ of $\overline{h}_1$ and $\overline{h}_2$ (see the beginning of the proof of Theorem~\ref{c1} for the definition of $\overline{h}_1$ and $\overline{h}_2$). Then, the set of $H$-poles is a variety of $\PG(V)$ with equation $g(u_1,\ldots,u_{n+2})=0$ where
\[ g(u_1,\ldots,u_{n+2}):=g_1(u_1,\ldots,u_{n})\cdot u_n=\left(\prod_{i=1}^{(n-3)/2} u_{2i+1}\right) \cdot u_n=
\prod_{i=1}^{(n-1)/2}u_{2i+1}.\]\end{proof}


\section{Constructions of families of hyperplanes}\label{sec 3}
In this section we will explain some general constructions yielding large families of hyperplanes of $k$-Grassmannians. More precisely, in Sections~\ref{extensions} and \ref{expansions} we shall briefly recall (without
proofs) two constructions already introduced in~\cite{ILP17} while in Section~\ref{decomposable} we will present a new one.

We first need to give the following definition which extends the definition of $\cS_p(H)$ given in Section~\ref{geom of poles}. For a $(k-2)$-subspace $X$ of $V$, let $(X)G_k$ be the set of $k$-subspaces of $V$ containing $X$. This is a subspace of $\cG_k(V)$. Let $(X)\cG_k$ be the geometry induced by $\cG_k(V)$ on $(X)G_k$ and put $(X)H := (X)G_k\cap H$. Then $(X)\cG_k \cong \cG_2(V/X)$ and either $(X)H = (X)G_k$ or $(X)H$ is a hyperplane of $(X)\cG_k$. In either case, the point-line geometry ${\cS}_X(H) = ((X)G_{k-1}, (X)H)$ is a polar space of symplectic type (possibly a trivial one, when $(X)H = (X)G_k)$). Let $R_X(H) := \Rad({\cS}_X(H))$ be the radical of ${\cS}_X(H)$.

\subsection{Extensions and trivial extensions}\label{extensions}
Let $V = V_0\oplus V_1$ be a decomposition of $V$ as the direct sum of two non-trivial subspaces $V_0$ and $V_1$. Put $n_0 := \dim(V_0)$ and assume that $n_0 \geq k$ ($\geq 3$). Let $\chi_0:V_0\times\cdots\times V_0\rightarrow \KK$ be a non-trivial $k$-linear alternating form on $V_0$. The form $\chi_0$ can naturally be extended to a $k$-linear alternating form $\chi$ of $V$ by setting
\begin{equation}\label{extended form}
\left.\begin{array}{rcll}
\chi(x_1,\dots, x_k) & = & 0 & \mbox{if} ~ x_i\in V_1 ~ \mbox{for some} ~ 1\leq i\leq k,\\
\chi(x_1,\dots, x_k) & = & \chi_0(x_1,\dots, x_k) & \mbox{if} ~ x_i\in V_0 ~\mbox{for all} ~ 1\leq i\leq k,
\end{array}\right\}
\end{equation}
and then extending by (multi)linearity. Let $H_\chi$ be the hyperplane of $\cG_k(V)$ defined by $\chi$.
Then, the following properties hold:
\begin{theorem}[\cite{ILP17}]\label{extended hyperplane}
  Let $\chi_0$ be a $k$-alternating linear form on $V_0$ and $n_0=\dim V_0$; define $\chi$ as
  in~\eqref{extended form}.
  For $n_0=k$, put $H_0=\emptyset$; otherwise, let $H_0$ be the hyperplane of
  $\cG_k(V_0)$ defined by $\chi_0$.
  Let also $\pi: V\rightarrow V_0$ be the projection of $V$ onto $V_0$ along $V_1$. Then,
  \begin{enumerate}[(1)]
  \item
    $H_\chi ~ = ~ \{X\in \cG_k(V)~\colon~ \mbox{either}~ X\cap V_1 \neq 0 ~\mbox{or}~ \pi(X)\in H_0\}$.
  \item\label{ext-1} $R_\downarrow(H_\chi) ~ = ~ \langle R_\downarrow(H_0) \cup [V_1]\rangle$ ~where the span in taken in $\PG(V)$.
\item\label{ext-2} $R^\uparrow(H_\chi) ~= ~ \{X\in G_{k-1}(V)~ \colon~  \mbox{either}~ X\cap V_1\neq 0 ~\mbox{or}~ \pi(X)\in R^\uparrow(H_0)\}.$
\item  When $k = 3$, the points $[p]\not\in [V_1]$ have degree $\delta(p) = \delta_0(\pi(p)) + n-n_0$, where $\delta_0(\pi(p))$ is the degree of $[\pi(p)]$ with respect to $H_0$. The points $p\in [V_1]$ have degree $n-1$.
\end{enumerate}
\end{theorem}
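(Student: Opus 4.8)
The plan is to establish each of the four assertions of Theorem~\ref{extended hyperplane} in turn, relying only on the definitions of $H_\chi$, $R_\downarrow$, $R^\uparrow$, and the explicit shape of $\chi$ in~\eqref{extended form}. The central observation, to be isolated first, is a ``vanishing-on-mixed-vectors'' principle: for any $k$-subspace $X=\langle x_1,\dots,x_k\rangle$ of $V$, writing each $x_i=\pi(x_i)+(x_i-\pi(x_i))$ with $\pi(x_i)\in V_0$ and $x_i-\pi(x_i)\in V_1$, multilinearity expands $\chi(x_1,\dots,x_k)$ into a sum of $2^k$ terms, and by~\eqref{extended form} every term containing at least one $V_1$-argument dies. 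Hence $\chi(x_1,\dots,x_k)=\chi_0(\pi(x_1),\dots,\pi(x_k))$, and in particular this depends only on $\pi(X)$. This identity is the workhorse for all four parts.

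For part~(1), I would argue: if $X\cap V_1\neq 0$, pick a nonzero $v\in X\cap V_1$ as one of the spanning vectors; then $\chi(v,x_2,\dots,x_k)=0$ by~\eqref{extended form}, so $X\in H_\chi$. Otherwise $X\cap V_1=0$, so $\pi|_X$ is injective and $\pi(X)$ is a genuine $k$-subspace of $V_0$; by the vanishing principle, $\chi$ vanishes on $X$ iff $\chi_0$ vanishes on $\pi(X)$, i.e.\ iff $\pi(X)\in H_0$ (for $n_0>k$; for $n_0=k$ there is no such $X$ with $X\cap V_1=0$ unless $\dim V_1<k$, and then $\chi_0$ being nontrivial on the full space $V_0$ forces $\pi(X)=V_0\notin H_0=\emptyset$, so such $X\notin H_\chi$, consistent with the stated formula). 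Part~(3) for $k=3$ then follows by applying part~(1)-type reasoning inside ${\cS}_p(H)$: for $[p]\notin[V_1]$ one checks that the symplectic form $h_p$ on $V/\langle p\rangle$ has radical equal to the preimage of $\Rad$ of the corresponding form on $V_0/\langle\pi(p)\rangle$ together with the image of $V_1$, giving the additive formula $\delta(p)=\delta_0(\pi(p))+(n-n_0)$; for $[p]\in[V_1]$, the identity shows $h_p$ depends only on $\pi(p)=0$, hence $h_p\equiv 0$ and $\delta(p)=\dim(V/\langle p\rangle)=n-1$. Parts~(2) and~(\ref{ext-2}) are the radical statements: $R_\downarrow(H_\chi)=[\Rad(\chi)]$ by definition, and one computes $\Rad(\chi)=\{v: \chi(v,\cdot,\dots,\cdot)\equiv 0\}$; writing $v=v_0+v_1$ and using the identity, $v\in\Rad(\chi)$ iff $v_0\in\Rad(\chi_0)$ (testing against $V_0$-arguments) and there is no further constraint from $V_1$-arguments since those always give $0$; this yields $\Rad(\chi)=\Rad(\chi_0)\oplus V_1$, i.e.\ $R_\downarrow(H_\chi)=\langle R_\downarrow(H_0)\cup[V_1]\rangle$. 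The computation for $R^\uparrow$ in~(\ref{ext-2}) is parallel: a $(k-1)$-space $Y$ lies in $R^\uparrow(H_\chi)$ iff $\tilde h_Y\equiv 0$ in the sense of~\eqref{eq2}-style, and the same splitting shows this holds iff $Y\cap V_1\neq 0$ or $\pi(Y)\in R^\uparrow(H_0)$.

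I expect the main subtlety — not a deep obstacle, but the place requiring care — to be the bookkeeping in part~(3), specifically verifying that the radical of the bilinear form $h_p$ on $V/\langle p\rangle$ decomposes as the claimed direct sum so that the \emph{dimension} of the radical is exactly additive. One must be careful about whether $\langle p\rangle$ itself is quotiented out of the $V_0$-part or the $V_1$-part, and that $[p]\notin[V_1]$ guarantees $\pi(p)\neq 0$ so that $V/\langle p\rangle\cong (V_0/\langle\pi(p)\rangle)\oplus V_1$ as a clean internal direct sum compatible with the form. The edge cases $n_0=k$ (where $H_0=\emptyset$ and the form $\chi_0$ is nondegenerate-on-nothing) also need a separate one-line check throughout. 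Since Theorem~\ref{extended hyperplane} is quoted from~\cite{ILP17}, I would keep the argument terse, emphasizing the multilinear vanishing identity and deferring routine verifications.
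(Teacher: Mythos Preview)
Your proposal is correct. Note, however, that the paper does not actually give a proof of this theorem: it is stated as a quotation from~\cite{ILP17} and the surrounding text explicitly says the constructions of Sections~\ref{extensions} and~\ref{expansions} are recalled ``without proofs''. So there is no proof in the paper to compare against; what you have written is essentially the natural argument one would expect in~\cite{ILP17}, driven by the identity $\chi(x_1,\dots,x_k)=\chi_0(\pi(x_1),\dots,\pi(x_k))$.

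One minor wording issue: in your parenthetical for the $n_0=k$ case you write ``there is no such $X$ with $X\cap V_1=0$ unless $\dim V_1<k$''. This is not literally true (take $X=V_0$ itself, or any diagonal $k$-space); what is true and what you actually use is that any such $X$ satisfies $\pi(X)=V_0$ because $\dim\pi(X)=k=n_0$, whence $\chi_0$ does not vanish on it. Your conclusion is right; just tighten the sentence.

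For part~(4), your bookkeeping is cleanest if you avoid the quotient isomorphism $V/\langle p\rangle\cong(V_0/\langle\pi(p)\rangle)\oplus V_1$ (which is true but awkward to write down when $p\notin V_0$) and instead compute directly in $V$: from the identity, $\Rad(\chi_p)=\pi^{-1}\bigl(\Rad((\chi_0)_{\pi(p)})\bigr)=\Rad((\chi_0)_{\pi(p)})\oplus V_1$, and since $p\in\Rad(\chi_p)$ one gets $\delta(p)=\dim\Rad(\chi_p)-1=(\delta_0(\pi(p))+1)+(n-n_0)-1$, as claimed. This sidesteps the ``which summand absorbs $\langle p\rangle$'' worry you flagged.
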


We call $H_\chi$ the \emph{trivial extension of $H_0$  centered at $V_1$}
(also \emph{extension of $H_0$ by $V_1$}, for short) and we denote it by the symbol $\ext_{V_1}(H_0).$ 
When $k=n_0$ we have $H_0=\emptyset$;  we shall
call $\ext_{V_1}(\emptyset)$ the \emph{trivial hyperplane centered at} $V_1$.
In this case Theorem~\ref{extended hyperplane}
can also be rephrased as follows, with no direct mention of $H_0$.

\begin{prop}[\cite{ILP17}]\label{ext hyp sing}
  Let $H=\ext_{V_1}(\emptyset)$ be a trivial hyperplane of $\cG_k(V)$.
  Then
  \[H ~ = ~ \{X\in G_k(V)~\colon~ X\cap V_1 \neq 0\}.\]
  Moreover, $R_\downarrow(H) ~ = ~  [V_1]$, $R^\uparrow(H) ~= ~ \{X\in \cG_{k-1}(V)~ \colon~ X\cap V_1\neq 0\}$ and, for $X\in \cG_{k-2}(V)$, if $X\cap V_1\neq 0$ then $R_X(H) = [V/X]$, otherwise $R_X(H) = [(V_1+ X)/X]$.
\end{prop}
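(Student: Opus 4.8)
The plan is to recognize the statement as the special case $k=n_0:=\dim V_0$ of Theorem~\ref{extended hyperplane}, to read off the first three identities from it, and then to compute $R_X(H)$ by hand, since that last assertion is not contained in that theorem. I would begin by recording the elementary fact that a non-trivial $k$-linear alternating form $\chi_0$ on the $k$-dimensional space $V_0$ is a non-zero scalar multiple of the determinant form attached to any basis of $V_0$; hence $\cG_k(V_0)$ is the single point $V_0$ on which $\chi_0$ does not vanish, there is no hyperplane $H_0$, and the convention $H_0=\emptyset$ (so that $R_\downarrow(H_0)=R^\uparrow(H_0)=\emptyset$) is forced. Substituting $H_0=\emptyset$ into the first assertion of Theorem~\ref{extended hyperplane} yields $H=\{X\in G_k(V):X\cap V_1\neq 0\}$, and substituting it into the formulas for $R_\downarrow(H_\chi)$ and $R^\uparrow(H_\chi)$ yields $R_\downarrow(H)=\langle[V_1]\rangle=[V_1]$ and $R^\uparrow(H)=\{X\in\cG_{k-1}(V):X\cap V_1\neq 0\}$.

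For completeness I would also check these three identities directly from~\eqref{extended form}, using the projection $\pi:V\to V_0$ along $V_1$: if a subspace $Y$ meets $V_1$ trivially then $\pi|_Y$ is injective, and expanding $\chi$ on a basis of $Y$ by multilinearity kills every summand involving a vector of $V_1$, so $\chi$ on $\textstyle\bigwedge^{\dim Y}Y$ coincides with $\chi_0$ on $\pi(Y)$, which is non-zero precisely when $\dim\pi(Y)=k$. Thus a $k$-space lies in $H$ iff it meets $V_1$ non-trivially; a point $[p]$ lies in $R_\downarrow(H)$ iff every $k$-space through $p$ meets $V_1$, which (after projecting $p$ and completing to a basis of $V_0$) forces $p\in V_1$; and a $(k-1)$-space meeting $V_1$ trivially extends to a $k$-space meeting $V_1$ trivially, by adjoining some $v\in V_0\setminus\pi(X)$, so $R^\uparrow(H)$ is exactly as stated.

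It remains to compute $R_X(H)=\Rad(\cS_X(H))$ for $X\in\cG_{k-2}(V)$. If $X\cap V_1\neq 0$ then every $k$-space $Y\supseteq X$ satisfies $Y\cap V_1\neq 0$, so $(X)H=(X)G_k$ and $\cS_X(H)$ is a trivial polar space, whence $R_X(H)=[V/X]$. If $X\cap V_1=0$, a point of $\cS_X(H)$ is a $1$-space $\langle\bar z\rangle$ of $V/X$ with $Z=X+\langle z\rangle$, and it lies in the radical iff $X+\langle z,w\rangle\in H$ for every $w$, i.e.\ $X+\langle z,w\rangle$ meets $V_1$ non-trivially whenever it is $k$-dimensional. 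If $z\in V_1+X$, write $z=z_1+x$ with $0\neq z_1\in V_1$; then $z_1\in X+\langle z,w\rangle$ for all $w$, so $\langle\bar z\rangle$ is in the radical. If $z\notin V_1+X$ then $\pi(z)\notin\pi(X)$, so $\pi(X+\langle z\rangle)$ is a proper subspace of $V_0$; choosing $v\in V_0\setminus\pi(X+\langle z\rangle)$ and $w=v$ makes $\pi$ injective on the $k$-space $X+\langle z,v\rangle$, which therefore meets $V_1$ trivially and is not in $H$, so $\langle\bar z\rangle$ is not in the radical. Hence $R_X(H)=[(V_1+X)/X]$. I do not foresee a genuine obstacle; the only delicate point is this last dimension bookkeeping — verifying that the ``generic'' extension $X+\langle z,v\rangle$ is really $k$-dimensional and $\pi$-injective — which is the crux of the argument.
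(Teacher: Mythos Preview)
Your proposal is correct and aligns with the paper's own treatment. The paper does not prove Proposition~\ref{ext hyp sing} directly: it is cited from \cite{ILP17} and described as a ``rephrasing'' of Theorem~\ref{extended hyperplane} in the degenerate case $n_0=k$, which is exactly how you obtain the description of $H$, $R_\downarrow(H)$ and $R^\uparrow(H)$. You correctly observe that the final claim about $R_X(H)$ is \emph{not} contained in Theorem~\ref{extended hyperplane} and must be supplied separately; your direct computation of $\Rad(\cS_X(H))$ via the projection $\pi$ is sound, including the dimension bookkeeping in the case $z\notin V_1+X$ (the key point being that $X\cap V_1=0$ makes $\pi|_X$ injective, so $\pi(X+\langle z\rangle)$ is a genuine hyperplane of $V_0$ and a vector $v\in V_0$ outside it exists and yields a $k$-space disjoint from $V_1$).
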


By construction, the lower radical of a trivial extension is never empty. The following theorem
shows that the converse is also true, namely  if  $R_\downarrow(H)\neq\emptyset$ then $H$ is a trivial extension, possibly a trivial hyperplane.

If $S$ is a subspace of $V$ with $\dim(S)>k$, denote by $H(S):=\cG_k(S)\cap H$ the hyperplane of $\cG_k(S)$ induced by $H.$

\begin{theorem}[\cite{ILP17}]\label{ext hyp th}
  Suppose $R_\downarrow(H) \neq \emptyset$ and let $S,S'$ be complements in $V$ of the subspace $R < V$ such that $[R] = R_\downarrow(H)$. Then
  \begin{enumerate}[(1)]
  \item
     $H ~=~ \ext_R(H(S))$;
  \item
    $H(S)\cong H(S')$;
  \item $R_\downarrow(H(S)) = \emptyset$.
  \end{enumerate}
\end{theorem}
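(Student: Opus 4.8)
The plan is to realise $H$, via its defining alternating $k$-linear form $\chi$, as an extension in the sense of~\eqref{extended form} and then to read off the three assertions. The starting observation is that, by the identity $R_\downarrow(H)=[\Rad(\chi)]$ recalled above, the hypothesis $[R]=R_\downarrow(H)$ forces $R=\Rad(\chi)$. Hence $\chi$ vanishes whenever one of its arguments lies in $R$, so it factors through the quotient as an alternating $k$-linear form $\bar\chi$ on $V/R$ whose radical is $\Rad(\chi)/R=0$; in particular $\bar\chi\neq 0$, so $\dim(V/R)\geq k$. Now fix a complement $S$ of $R$ in $V$ and let $\pi\colon V\to S$ be the projection along $R$. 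The natural map $S\to V/R$ is an isomorphism under which $\bar\chi$ corresponds to the restriction $\chi_0:=\chi|_{S\times\cdots\times S}$, and $\chi(x_1,\dots,x_k)=\chi_0(\pi(x_1),\dots,\pi(x_k))$ for all $x_i\in V$ (to see this directly, write $x_i=\pi(x_i)+r_i$ with $r_i\in R$ and expand by multilinearity: every term containing some $r_i$ dies because $r_i\in\Rad(\chi)$). This is precisely the recipe defining $\chi$ as the extension of $\chi_0$ by $R$. Moreover a $k$-subspace $X\subseteq S$ lies in $H$ iff $\chi_0$ vanishes on $X$, so $H(S)=\cG_k(S)\cap H$ is exactly the hyperplane $H_{\chi_0}$ of $\cG_k(S)$ (and $H(S)=\emptyset$ when $\dim S=k$, consistently with the convention in Theorem~\ref{extended hyperplane}). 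Applying Theorem~\ref{extended hyperplane} with $V_0=S$, $V_1=R$ and $\chi_0$ as above now yields $H=\ext_R(H(S))$, which is assertion~(1).

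For assertion~(3): under the isomorphism $S\xrightarrow{\sim}V/R$ the form $\chi_0$ corresponds to $\bar\chi$, hence $\Rad(\chi_0)$ corresponds to $\Rad(\bar\chi)=0$; equivalently, if $s\in\Rad(\chi_0)$ then the same multilinearity argument as above gives $\chi(s,x_1,\dots,x_{k-1})=0$ for every $x_i\in V$, so $s\in\Rad(\chi)\cap S=R\cap S=0$. Thus $R_\downarrow(H(S))=[\Rad(\chi_0)]=\emptyset$.

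For assertion~(2): let $S'$ be another complement of $R$ and set $\chi_0'=\chi|_{S'\times\cdots\times S'}$, so that $H(S')=H_{\chi_0'}$. Both $\chi_0$ and $\chi_0'$ are pullbacks of $\bar\chi$ along the canonical isomorphisms $S\to V/R$ and $S'\to V/R$; consequently the composite $g\colon S\xrightarrow{\sim}V/R\xrightarrow{\sim}S'$ — concretely $g=\pi_{S'}|_S$, the restriction to $S$ of the projection $\pi_{S'}\colon V\to S'$ along $R$, which is bijective since $S\cap R=0$ and $\dim S=\dim S'$ — is a linear isomorphism with $\chi_0'(gs_1,\dots,gs_k)=\chi_0(s_1,\dots,s_k)$ for all $s_i\in S$ (because $gs_i-s_i\in R=\Rad(\chi)$). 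Therefore $g$ carries the $k$-subspaces of $S$ on which $\chi_0$ vanishes onto those of $S'$ on which $\chi_0'$ vanishes, i.e. $g(H(S))=H(S')$, so $H(S)\cong H(S')$.

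Essentially everything here is formal once one has the identification $R=\Rad(\chi)$; the only points requiring a little care are (a) the degenerate case $\dim S=k$, where $H(S)=\emptyset$ and all three statements hold trivially under the stated conventions, and (b) pinning down, in assertion~(2), the meaning of $\cong$ for hyperplanes of the a priori distinct Grassmannians $\cG_k(S)$ and $\cG_k(S')$ — namely that it is induced by a linear isomorphism $S\to S'$ — and exhibiting the right such isomorphism (the parallel projection $\pi_{S'}|_S$), together with the one-line check that it transports $\chi_0$ to $\chi_0'$. I do not anticipate any genuine obstacle beyond this bookkeeping.
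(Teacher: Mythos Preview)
The paper does not actually prove this theorem: it is quoted from \cite{ILP17} and stated without proof (see the opening of Section~\ref{sec 3}, where the authors explicitly say they recall the constructions of Sections~\ref{extensions} and~\ref{expansions} without proofs). So there is no ``paper's own proof'' to compare against here.

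That said, your argument is correct and is the natural one. The only very minor remark is that the appeal to Theorem~\ref{extended hyperplane} for assertion~(1) is slightly misplaced: once you have established $\chi(x_1,\dots,x_k)=\chi_0(\pi(x_1),\dots,\pi(x_k))$, this \emph{is} the defining recipe~\eqref{extended form}, so $H=H_\chi=\ext_R(H(S))$ follows directly from the definition of $\ext_R$ given just after that theorem. Theorem~\ref{extended hyperplane} records properties of the extension (its radicals, degrees of poles) rather than characterising it. Everything else --- the identification $R=\Rad(\chi)$, the factoring through $V/R$, the parallel-projection isomorphism $\pi_{S'}|_S\colon S\to S'$ for part~(2) together with the check $gs_i-s_i\in R=\Rad(\chi)$, and the handling of the degenerate case $\dim S=k$ --- is clean and complete.
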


Each hyperplane $H$ of $\cG_k(V)$ defined by a $k$-linear alternating form $h$ with $\rank(h)<\dim V$ is
clearly a trivial extension of a hyperplane $H'$ of $\cG_k(V')$ with $\dim V'=\rank(h)$, since $V=\Rad(h)\oplus V'.$

\subsection{Expansions}\label{expansions}

Let $V_0$ be a hyperplane of $V$ and $H_0$ a given hyperplane of $\cG_{k-1}(V_0)$. Assume $k \geq 3$; hence $V$ has dimension $n \geq 4$. Put
\[\expa(H_0) ~:= ~ \{X\in \cG_k(V) ~\colon~ \mbox{either}~ X\subset V_0 ~\mbox{or}~X\cap V_0 \in H_0\}.\]

\begin{theorem}[\cite{ILP17}]\label{sympl hyperplane}
The set $\expa(H_0)$ is a hyperplane of $\cG_k(V)$. Moreover,
\begin{enumerate}[(1)]
\item\label{sym-1}
 $R_\downarrow(\expa(H_0)) ~= ~ R_\downarrow(H_0)$.
\item\label{sym-2}
 $R^\uparrow(\expa(H_0)) ~= ~ H_0\cup\{X\in \cG_{k-1}(V)\setminus \cG_{k-1}(V_0) ~\colon~ X\cap V_0 \in R^\uparrow(H_0)\}$.
\item\label{sym-3}
 For $X\in \cG_{k-2}(V)$, if $X\subseteq V_0$ with $X\in R^\uparrow(H_0)$ then $R_X(\expa(H_0)) = {\cS}_X(\expa(H_0)) = (X)G_{k-1}$ (the latter being computed in $V$). If $X\subseteq V_0$ but $X\not\in R^\uparrow(H_0)$ then $R_X(\expa(H_0)) = (X)H_0$ (a subspace of $\PG(V_0/X)$). Finally, if $X\not\subseteq V_0$, then $R_X(\expa(H_0)) ~=~ \{\langle x, Y\rangle~\colon~ Y\in R_{X\cap V_0}(H_0)\}$ for a given $x\in X\setminus V_0$, no matter which.
\end{enumerate}
\end{theorem}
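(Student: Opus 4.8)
The plan is to realise $\expa(H_0)$ as the hyperplane associated to an explicit linear functional and then to read off the three radicals from the induced bilinear forms. First I would fix a vector $e\in V\setminus V_0$, so $V=V_0\oplus\langle e\rangle$, and pick $h_0\in(\bigwedge^{k-1}V_0)^*$ defining $H_0$. Using $\bigwedge^k V=\bigwedge^k V_0\oplus\big((\bigwedge^{k-1}V_0)\wedge e\big)$, I would define $h\in(\bigwedge^kV)^*$ to vanish on $\bigwedge^kV_0$ and to agree with $h_0$ on the second summand (morally, $h=e^*\wedge h_0$). For a $k$-space $X$ one checks directly that $h(x_1\wedge\cdots\wedge x_k)=0$ whenever $X\subseteq V_0$, while if $X\not\subseteq V_0$, choosing $x_1,\dots,x_{k-1}$ to span $X\cap V_0$ and $x_k=y_k+ce$ with $c\neq 0$, one gets $h(x_1\wedge\cdots\wedge x_k)=c\,h_0(x_1\wedge\cdots\wedge x_{k-1})$, which vanishes exactly when $X\cap V_0\in H_0$. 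Hence $\expa(H_0)=H_h$, and $h\neq 0$ because $h_0\neq 0$, so $\expa(H_0)$ is a hyperplane.

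For part~(1) I would first note that if $p\notin V_0$ and $Y_0\in\cG_{k-1}(V_0)\setminus H_0$ (which exists since $H_0$ is proper), then $\langle p,Y_0\rangle$ is a $k$-space through $p$ meeting $V_0$ exactly in $Y_0$, so it is not in $\expa(H_0)$; thus $R_\downarrow(\expa(H_0))\subseteq[V_0]$. For $p\in V_0$, every $k$-space through $p$ not inside $V_0$ is of the form $\langle Y_0,w\rangle$ with $p\in Y_0\in\cG_{k-1}(V_0)$ and $w\notin V_0$, and it meets $V_0$ precisely in $Y_0$; hence $[p]\in R_\downarrow(\expa(H_0))$ iff every $(k-1)$-space of $V_0$ through $p$ lies in $H_0$, i.e. iff $[p]\in R_\downarrow(H_0)$. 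Part~(2) is the parallel analysis for a $(k-1)$-space $X$: if $X\subseteq V_0$, the $k$-spaces through $X$ are those inside $V_0$ together with $\langle X,w\rangle$ ($w\notin V_0$), each meeting $V_0$ in $X$, so all of them lie in $\expa(H_0)$ iff $X\in H_0$; if $X\not\subseteq V_0$, then $X\cap V_0$ is a $(k-2)$-space, every $k$-space $Z\supseteq X$ has $Z\not\subseteq V_0$, and $Z\cap V_0$ runs over all $(k-1)$-spaces of $V_0$ containing $X\cap V_0$, whence $X\in R^\uparrow(\expa(H_0))$ iff $X\cap V_0\in R^\uparrow(H_0)$.

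For part~(3), fix $X\in\cG_{k-2}(V)$ and work with the alternating bilinear form $b_X(\bar y,\bar z):=h(x_1\wedge\cdots\wedge x_{k-2}\wedge y\wedge z)$ on $V/X$, whose totally isotropic $2$-spaces correspond to the lines of ${\cS}_X(\expa(H_0))$ and whose radical is $R_X(\expa(H_0))$. If $X\subseteq V_0$, write $V/X=(V_0/X)\oplus\langle\bar e\rangle$ and put $\ell_0(\bar y):=h_0(x_1\wedge\cdots\wedge x_{k-2}\wedge y)$ for $y\in V_0$, a linear form on $V_0/X$ whose zero locus is $(X)H_0$; a short computation gives $b_X\equiv 0$ on $V_0/X$ and $b_X(\bar y,\bar e)=\ell_0(\bar y)$. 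Thus if $X\in R^\uparrow(H_0)$ (so $\ell_0\equiv 0$) then $b_X\equiv 0$ and $R_X(\expa(H_0))=(X)G_{k-1}$, while if $X\notin R^\uparrow(H_0)$ then $\Rad(b_X)=\{\,\bar y\in V_0/X:\ell_0(\bar y)=0\,\}=(X)H_0$. If instead $X\not\subseteq V_0$, pick $x\in X\setminus V_0$: then $Y\mapsto Y\cap V_0$ is a bijection from the $(k-1)$-spaces of $V$ through $X$ onto the $(k-2)$-spaces of $V_0$ through $X\cap V_0$, with inverse $Y_0\mapsto\langle Y_0,x\rangle$, and likewise on $k$-spaces it carries the lines of ${\cS}_X(\expa(H_0))$ onto those of ${\cS}_{X\cap V_0}(H_0)$, preserving incidence; this is an isomorphism ${\cS}_X(\expa(H_0))\cong{\cS}_{X\cap V_0}(H_0)$, and taking radicals yields $R_X(\expa(H_0))=\{\,\langle x,Y\rangle:Y\in R_{X\cap V_0}(H_0)\,\}$.

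The routine algebra (the identities for $h$ and for $b_X$) is straightforward; the delicate part is the combinatorial bookkeeping in part~(3): one must check carefully that the maps $Y\mapsto Y\cap V_0$ on $(k-1)$-spaces and $Z\mapsto Z\cap V_0$ on $k$-spaces really are mutually inverse bijections intertwining the two polar spaces when $X\not\subseteq V_0$, and that the resulting description of $R_X(\expa(H_0))$ does not depend on the auxiliary vector $x\in X\setminus V_0$; one must also keep the sub-cases $X\in R^\uparrow(H_0)$ and $X\notin R^\uparrow(H_0)$ strictly apart, since they produce genuinely different local behaviour (all of $(X)G_{k-1}$ versus a proper hyperplane of $\PG(V_0/X)$).
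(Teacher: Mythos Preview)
Your argument is correct. Note, however, that the present paper does \emph{not} prove this theorem: it is quoted from~\cite{ILP17} and explicitly recalled ``without proofs'' (see the opening paragraph of Section~\ref{sec 3}). There is therefore no proof in this paper to compare against. What the paper does do is write down, in Equation~\eqref{expanded form}, exactly the linear functional $h$ you construct (your ``$h=e^*\wedge h_0$''), confirming that your realisation of $\expa(H_0)$ as $H_h$ is the intended one.

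A few small points on the write-up. In part~(1), when you reduce to $p\in V_0$ you should say explicitly that $k$-spaces through $p$ contained in $V_0$ are automatically in $\expa(H_0)$ by definition, so only the $k$-spaces $\langle Y_0,w\rangle$ with $w\notin V_0$ matter; you use this but do not state it. In part~(3), case $X\not\subseteq V_0$, your bijection argument is fine, but it is worth recording the dimension count once: $\dim(X\cap V_0)=k-3$, so the map $Y\mapsto Y\cap V_0$ lands in the $(k-2)$-spaces of $V_0$ through $X\cap V_0$, and the inverse $Y_0\mapsto\langle Y_0,x\rangle$ lands back in the $(k-1)$-spaces of $V$ through $X$ because $X=(X\cap V_0)\oplus\langle x\rangle\subseteq Y_0\oplus\langle x\rangle$. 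The independence of the choice of $x\in X\setminus V_0$ then follows immediately from the fact that $\langle Y_0,x\rangle$ is characterised as the unique $(k-1)$-space through $X$ meeting $V_0$ in $Y_0$. Finally, in the sub-case $X\subseteq V_0$, $X\notin R^\uparrow(H_0)$, your computation of $\Rad(b_X)$ should also record that the $\bar e$-component must vanish (you need $c=0$ as well as $\ell_0(\bar y)=0$), which is what forces the radical to sit inside $V_0/X$ and hence equal $(X)H_0$.
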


We call $\expa(H_0)$ the \emph{expansion} of $H_0$. A form $h:\bigwedge^kV\rightarrow \KK$ associated to $\expa(H_0)$ can be constructed as follows.
Suppose $h_0:\bigwedge^{k-1}V_0\rightarrow \KK$ is the $(k-1)$-alternating linear form defining $H_0$.
Suppose $V_0=\langle e_1,\ldots, e_{n-1}\rangle$ where $E=(e_i)_{i=1}^n$ is the given basis of $V$. Recall that $\{e_{i_1}\wedge\cdots\wedge e_{i_k} ~\colon ~ 1\leq i_1 < \ldots < i_k \leq n\}$ is a basis of $\bigwedge^kV$. Put
\begin{equation}\label{expanded form}
h(e_{i_1}\wedge\cdots\wedge e_{i_k}) = \left\{\begin{array}{l}
0 ~\mbox{if}~ i_k < n,\\
h_0(e_{i_1}\wedge\cdots\wedge e_{i_{k-1}}) ~\mbox{if}~ i_k = n.
\end{array}\right.
\end{equation}
and extend it by linearity. It is easy to check that the form $h$ defines $\expa(H_0)$.

We now recall some properties linking expansions and trivial extensions which might be of
use in investigating the geometries involved.
\begin{theorem}[\cite{ILP17}]  \label{symp hyp}
  Let $H_0$ be a hyperplane of $\cG_k(V_0)$; then
  \begin{enumerate}
  \item $R_\downarrow(\expa(H_0)) = \emptyset$ if and only if $R_\downarrow(H_0) = \emptyset$;
  \item denote by $S_0$ a complement of $R_0\leq V$ such that $[R_0]=R_{\downarrow}(H_0)$;
    then,
    $\expa(H_0)=\ext_{R_0}(\expa(H_0(S_0)))$ where $H_0(S_0)$ is the hyperplane induced on $S_0$ by $H_0$;
  \item if $H_0$ is trivial, then $\expa(H_0)$ is also trivial with center $R_{\downarrow}(H_0)$.
  \end{enumerate}
\end{theorem}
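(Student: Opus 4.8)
The plan is to prove the three statements in order, in each case unwinding the definitions of $\expa$ and $\ext$ and then invoking the structural results already established, namely Theorem~\ref{sympl hyperplane} for expansions, Theorem~\ref{extended hyperplane} and Proposition~\ref{ext hyp sing} for extensions, and Theorem~\ref{ext hyp th} for the characterisation of hyperplanes with non-empty lower radical. For part (1), I would simply read off the equivalence from Theorem~\ref{sympl hyperplane}\eqref{sym-1}, which says $R_\downarrow(\expa(H_0)) = R_\downarrow(H_0)$; since $R_\downarrow$ of a hyperplane is, by definition, a (possibly empty) projective subspace $[R_\downarrow(H_0)]$ of $\PG(V_0)\subseteq\PG(V)$, the two sides are empty simultaneously, so there is essentially nothing further to do.

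For part (2), write $R_0\le V$ with $[R_0]=R_\downarrow(H_0)$ and pick a complement $S_0$ of $R_0$ in $V$; note that since $R_0\subseteq V_0$ (the lower radical lives in $V_0$), one may and should choose $S_0$ so that $S_0\cap V_0$ is a complement of $R_0$ inside $V_0$ — this compatibility is what makes the two constructions commute. By Theorem~\ref{ext hyp th} applied to $H_0$ inside $\cG_k(V_0)$ (using part~(1), or rather the hypothesis that forces $R_\downarrow(H_0)\ne\emptyset$ when the statement is non-vacuous), we have $H_0 = \ext_{R_0}(H_0(S_0\cap V_0))$ with $R_\downarrow(H_0(S_0\cap V_0)) = \emptyset$. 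The task is then to show $\expa(\ext_{R_0}(H_0(S_0\cap V_0))) = \ext_{R_0}(\expa(H_0(S_0\cap V_0)))$ as hyperplanes of $\cG_k(V)$. I would do this by a direct membership check on $k$-subspaces $X$ of $V$: using the explicit descriptions $\expa(H_0) = \{X : X\subseteq V_0 \text{ or } X\cap V_0\in H_0\}$ and $\ext_{V_1}(K) = \{X : X\cap V_1\ne 0 \text{ or }\pi(X)\in K\}$ from Theorem~\ref{extended hyperplane}(1), both sides expand to the same explicit condition on how $X$ meets $R_0$, $V_0$, and $S_0$; the verification splits into the cases $X\cap R_0\ne 0$ and $X\cap R_0 = 0$. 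Alternatively — and this is cleaner — one can argue at the level of the defining forms: the trilinear (or $k$-linear) form of $\expa(\ext_{R_0}(K))$ and that of $\ext_{R_0}(\expa(K))$ are both obtained from the form of $K$ by the same two elementary operations (extend by zero across $R_0$; extend by the "new coordinate $e_n$" recipe of~\eqref{expanded form}), and these operations commute because they act on disjoint blocks of basis vectors, whence the forms are literally equal. I would present this second argument as the main line and mention the membership check as a sanity check.

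For part (3), suppose $H_0$ is trivial, say $H_0 = \ext_{V_1}(\emptyset)$ for some subspace $V_1\le V_0$; by Proposition~\ref{ext hyp sing}, $R_\downarrow(H_0) = [V_1]$, so the claim is that $\expa(H_0)$ is the trivial hyperplane of $\cG_k(V)$ centered at $V_1$. Again I would verify this by the membership description: $X\in\expa(H_0)$ iff $X\subseteq V_0$ or $X\cap V_0\in H_0$; in the first case $X\subseteq V_0$ is a $k$-space of $V_0$, which lies in $H_0$ — wait, here one must be slightly careful, since $\expa$ takes a hyperplane of $\cG_{k-1}(V_0)$, so "$X\subseteq V_0$" already forces $X\in\expa(H_0)$ regardless — so I would instead check directly that $X\cap V_1\ne 0$ is equivalent to ($X\subseteq V_0$ or $X\cap V_0\in\ext_{V_1}(\emptyset)$), using $V_1\subseteq V_0$ and Proposition~\ref{ext hyp sing}'s description of $\ext_{V_1}(\emptyset)$ in $\cG_{k-1}(V_0)$ as $\{Y : Y\cap V_1\ne 0\}$. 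If $X\cap V_1\ne 0$ then $X\cap V_0\supseteq X\cap V_1\ne 0$; if $X\subseteq V_0$ then trivially $X\in\expa(H_0)$ but we also get $X\in\cG_k(V_0)$, and one checks a $k$-space of $V_0$ need not meet $V_1$, so the honest statement is that the center is $V_1$ together with how $k$-spaces inside $V_0$ are handled — comparing with $\expa$'s own clause "$X\subseteq V_0\Rightarrow X\in\expa(H_0)$" and Proposition~\ref{ext hyp sing} restricted appropriately shows these agree, giving $\expa(H_0) = \ext_{V_1}(\emptyset)$ in $\cG_k(V)$ and hence $R_\downarrow(\expa(H_0)) = [V_1] = R_\downarrow(H_0)$ by Theorem~\ref{sympl hyperplane}\eqref{sym-1}.

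The main obstacle is not any single deep step but the bookkeeping in part~(2): one has to set up the complements $S_0\subseteq V$ and $S_0\cap V_0\subseteq V_0$ compatibly, and then be careful that the "expansion hyperplane $V_0$" of $V$ and the "center $R_0$" of the extension interact correctly — in particular that $R_0\subseteq V_0$, so that expanding and extending genuinely act on complementary pieces of the basis. Once that compatibility is in place, both (2) and (3) reduce to the observation that $\expa$ and $\ext_{R_0}$ are defined by disjoint, hence commuting, modifications of the defining form, and the result follows from Theorems~\ref{extended hyperplane}, \ref{sympl hyperplane} and~\ref{ext hyp th} with no further computation.
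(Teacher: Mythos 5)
This theorem is quoted from \cite{ILP17} and the paper gives no proof of it (Section~\ref{sec 3} explicitly recalls the constructions ``without proofs''), so there is nothing to compare your argument against; I can only assess it on its own terms. Parts (1) and (2) of your proposal are sound: (1) is indeed immediate from Theorem~\ref{sympl hyperplane}(\ref{sym-1}), and for (2) your main line --- writing $H_0=\ext_{R_0}(H_0(S_0\cap V_0))$ via Theorem~\ref{ext hyp th} and then observing that the two form-level recipes \eqref{extended form} and \eqref{expanded form} act on disjoint blocks of basis vectors and hence commute, producing literally the same functional on $\bigwedge^k V$ --- is correct and is the cleanest way to do it. Your insistence on choosing $S_0$ compatibly, i.e.\ with $S_0\cap V_0$ a complement of $R_0$ inside $V_0$, is exactly the right precaution.

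Part (3), however, contains a genuine error at the decisive step. You claim that ``a $k$-space of $V_0$ need not meet $V_1$'' and then wave at ``comparing \ldots shows these agree''. If that claim were true, the statement would be \emph{false}: a $k$-space $X\subseteq V_0$ with $X\cap V_1=0$ would lie in $\expa(H_0)$ (by the clause $X\subseteq V_0$) but not in the trivial hyperplane $\ext_{V_1}(\emptyset)=\{X:X\cap V_1\neq 0\}$ of $\cG_k(V)$, so the inclusion $\expa(H_0)\subseteq\ext_{V_1}(\emptyset)$ --- the only nontrivial half of the verification --- would fail. The missing ingredient is the dimension count forced by triviality of $H_0$: since $H_0$ is the trivial hyperplane of $\cG_{k-1}(V_0)$ centered at $V_1$, a complement of $V_1$ in $V_0$ has dimension $k-1$, i.e.\ $\dim V_1=\dim V_0-(k-1)$. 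Hence for any $k$-dimensional $X\subseteq V_0$ one has $\dim(X\cap V_1)\geq k+\dim V_1-\dim V_0=1$, so $X$ \emph{always} meets $V_1$. With that in hand the membership check closes: if $X\not\subseteq V_0$ then $\dim(X\cap V_0)=k-1$ and, since $V_1\subseteq V_0$, $X\cap V_1=(X\cap V_0)\cap V_1$, so $X\cap V_0\in H_0$ iff $X\cap V_1\neq0$; if $X\subseteq V_0$ both conditions hold automatically. This gives $\expa(H_0)=\ext_{V_1}(\emptyset)$ in $\cG_k(V)$ (note $\dim V_1=n-k$, the correct codimension for a trivial hyperplane of $\cG_k(V)$), and then $R_\downarrow(\expa(H_0))=[V_1]=R_\downarrow(H_0)$ as desired. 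Alternatively, part (3) follows from your part (2) applied with $H_0(S_0\cap V_0)=\emptyset$, since $\expa(\emptyset)=\emptyset$ and $\ext_{R_0}(\emptyset)$ is trivial centered at $R_0$; either way the dimension count must appear somewhere.
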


\subsubsection{Symplectic hyperplanes}\label{symp hyp sec}
Assume now $k = 3$ and take $H_0$ to be a hyperplane of $\cG_2(V_0)$ (hence defined by a bilinear alternating form of $V_0$). The point-line geometry ${\cS}(H_0) = (G_1(V_0), H_0)$ having as points all points of $[V_0]$ and as lines all elements in $H_0$, is a polar space of symplectic type. The upper and lower radical of $H_0$ are mutually equal and coincide with the radical $R(H_0)$ of ${\cS}(H_0)$.

First suppose  that ${\cS}(H_0)$ is non-degenerate. Then $n-1$ is even, whence $n \geq 5$. Claims (\ref{sym-1}) and (\ref{sym-2}) of Theorem~\ref{sympl hyperplane} imply that
$R_\downarrow(\expa(H_0)) = \emptyset$ and $R^\uparrow(\expa(H_0)) = H_0$; thus the geometry of poles $\cP(\expa(H_0))$ of $\expa(H_0)$ coincides  precisely with the symplectic polar space ${\cS}(H_0).$ 
In particular, the points of $[V]\setminus[V_0]$ are smooth while those of $[V_0]$ are poles of degree $1$.
Motivated by the above remark we call $\expa(H_0)$ a \emph{symplectic hyperplane} whenever
$R(H_0) = \emptyset.$

Assume now that ${\cS}(H_0)$ is degenerate, i.e. $\Rad({\cS}(H_0)) \neq 0$.
In this case, $R_\downarrow(\expa(H_0))\neq 0$ since $R_\downarrow(\expa(H_0))=\Rad({\cS}(H_0))$; so $\expa(H_0)$ is either a trivial extension
of a symplectic hyperplane by $\Rad({\cS}(H_0))$ (this happens when $\dim(\Rad({\cS}(H_0)) < n-3$) or a trivial hyperplane centered at $\Rad({\cS}(H_0))$ (this happens when $\dim(\Rad({\cS}(H_0)) =  n-3$).

\subsection{Block decomposable hyperplanes}
\label{decomposable}
The construction of block decomposable hyperplanes can be done for general $k\geq 3$ but we will give the details for the case $k=3.$

Suppose $V=V_0\oplus V_1$. Any vector $x\in V$ can then be uniquely written as $x=x_0+x_1$ with $x_0\in V_0$ and $x_1\in V_1.$ For $i=0,1$ let $\bar{h}_i:\bigwedge^3V_i\to\KK$ be a linear functional defining the hyperplane $H_i$ of $\cG_3(V_i)$. Consider the extension $h_i:\bigwedge^3V\rightarrow \KK$ of $\bar{h}_i$ to $V$ given by
\[ h_i(x\wedge y\wedge z)=\bar{h}_i(x_i\wedge y_i\wedge z_i) \]
where  $x=x_0+x_1,y=y_0+y_1,z=z_0+z_1\in V$ and  $x_i,y_i,z_i\in V_i.$

Let $h:=h_0+h_1$ be the trilinear form of $V$ defined by the sum of $h_0$ and $h_1.$ So,
\[
  h((x_0+x_1)\wedge(y_0+y_1)\wedge(z_0+z_1))=\bar{h}_0(x_0\wedge y_0\wedge z_0)+
  \bar{h}_1(x_1\wedge y_1\wedge z_1). \]

Then the hyperplane of $\cG_3(V)$ defined by $h$ is called a \emph{block decomposable hyperplane}
arising from $H_0$ and $H_1$ and it will be denoted by $\dec(H_0,H_1).$

\begin{theorem}
  \label{t3}
 Let $H:=\dec(H_0,H_1)$ be a block decomposable hyperplane of $\cG_3(V).    $  
  Then the following hold:
  \begin{enumerate}
  \item The poles of $H$ are all the points of $\PG(V_0\oplus V_1)$;
  \item\label{c2} $R^{\uparrow}(H)=\{ \ell\in\cG_2(V) : (\pi_0(\ell)\in R^{\uparrow}(H_0) \mbox{ or }
    \dim(\pi_0(\ell))<2) \mbox{ and } (\pi_1(\ell)\in R^{\uparrow}(H_1) \mbox{ or }
    \dim(\pi_1(\ell))<2)\}$
    where $\pi_i\colon V\rightarrow V_i$ is the projection of $V$
    onto $V_i$ along $V_j$ ($j\neq i$, $i=0,1$).
    Denote by $\varepsilon_2\colon \cG_2(V)\rightarrow \PG(\bigwedge^2 V)$ the Pl\"ucker embedding of the $2$-Grassmannian $\cG_2$.
    We have
    \[\varepsilon_2(R^{\uparrow}(H))=[\varepsilon_2(R^{\uparrow}(H_0))+
    \varepsilon_2(R^{\uparrow}(H_1))+V_0\wedge V_1]\cap\varepsilon_2(\cG_2)\]
    where
    $V_0\wedge V_1:=\langle v_0\wedge v_1: v_0\in V_0, v_1\in V_1\rangle.$
 \end{enumerate}
\end{theorem}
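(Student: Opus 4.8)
The plan is to analyze the structure of the matrix $M_u$ associated to $h = h_0 + h_1$ for an arbitrary $[u]\in\PG(V)$, writing $u = u_0 + u_1$ with $u_i\in V_i$. First I would choose bases of $V_0$ and $V_1$ compatible with the decomposition, so that in block form
\[
M_u = \begin{pmatrix} N_{u_0} & 0 \\ 0 & N_{u_1} \end{pmatrix},
\]
where $N_{u_i}$ is the matrix of the bilinear form $\chi_{u_i}$ on $V_i$ attached to $\bar h_i$; this block-diagonal shape is immediate from the formula $h((x_0+x_1)\wedge(y_0+y_1)\wedge(z_0+z_1)) = \bar h_0(x_0\wedge y_0\wedge z_0) + \bar h_1(x_1\wedge y_1\wedge z_1)$, because any cross term that mixes $V_0$ and $V_1$ is annihilated by both $h_0$ and $h_1$. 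For part (1), I would observe that $\rank(M_u) = \rank(N_{u_0}) + \rank(N_{u_1})$, and that by Corollary~\ref{rank Mu-cor} applied inside each $\cG_3(V_i)$ (or directly from Proposition~\ref{rank Mu-prop}, since $u_i\ne 0$ for at least the side where $u$ has a nonzero component, and the zero side contributes rank $0$) each summand is at most $\dim(V_i) - 1$; hence $\rank(M_u) \le n - 2$, so $[u]$ is a pole by Corollary~\ref{corollary- H pole}. This needs a small case check: if $u_0 = 0$ then $N_{u_0} = 0$ has rank $0$ and $\rank(N_{u_1}) \le \dim(V_1) - 1$, likewise if $u_1 = 0$; and if both $u_i \ne 0$ then each side loses at least one in rank. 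In every case the bound holds, establishing (1).

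For part (2), I would use the description~\eqref{eq2} of $R^\uparrow(H)$: a line $\ell = [x,y]$ lies in $R^\uparrow(H)$ iff $\tilde h_{xy}(e) = h(e\wedge x\wedge y) = 0$ for all $e\in V$. Splitting $e = e_0 + e_1$, $x = x_0 + x_1$, $y = y_0 + y_1$ and expanding via the block-diagonal formula, the functional $\tilde h_{xy}$ decomposes as a sum of a part supported on $V_0$ depending only on $x_0, y_0$ and a part supported on $V_1$ depending only on $x_1, y_1$; since the $e_0$- and $e_1$-tests are independent, $\ell\in R^\uparrow(H)$ iff $\bar h_0(e_0\wedge x_0\wedge y_0) = 0$ for all $e_0\in V_0$ and $\bar h_1(e_1\wedge x_1\wedge y_1) = 0$ for all $e_1\in V_1$. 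The first condition says $x_0\wedge y_0$ is annihilated by $\bar h_0$, i.e. either $\pi_0(\ell) = [x_0,y_0]$ is a line lying in $R^\uparrow(H_0)$, or $x_0\wedge y_0 = 0$, i.e. $\dim(\pi_0(\ell)) < 2$ (the projected "line" degenerates to a point or nothing). The symmetric statement holds for $\pi_1$, which is exactly the set description in \eqref{c2}. Here I should be slightly careful that "$\pi_i(\ell)\in R^\uparrow(H_i)$" is to be read as "$\pi_i(\ell)$ is a $2$-subspace on which $\bar h_i$ vanishes identically", matching the convention that $R^\uparrow$ consists of lines of $\PG(V_i)$.

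For the embedded version, I would pass to $\bigwedge^2 V = \bigwedge^2 V_0 \oplus (V_0\wedge V_1) \oplus \bigwedge^2 V_1$. A decomposable bivector $x\wedge y$ with $x = x_0 + x_1$, $y = y_0 + y_1$ expands as $x_0\wedge y_0 + (x_0\wedge y_1 + x_1\wedge y_0) + x_1\wedge y_1$, whose three components are the projections onto the three summands; in particular the $\bigwedge^2 V_i$-component of $\varepsilon_2(\ell)$ is $\pi_i(x)\wedge\pi_i(y)$, which represents $\varepsilon_2(\pi_i(\ell))$ when $\pi_i(\ell)$ is $2$-dimensional and is $0$ otherwise. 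Combining this with the set-theoretic description from \eqref{c2}, a point of $\varepsilon_2(\cG_2)$ lies in $\varepsilon_2(R^\uparrow(H))$ iff its $\bigwedge^2 V_0$-component lies in $\langle \varepsilon_2(R^\uparrow(H_0))\rangle$ (the linear span, which absorbs the degenerate case since $0$ is in any span) and its $\bigwedge^2 V_1$-component lies in $\langle\varepsilon_2(R^\uparrow(H_1))\rangle$, with no constraint on the $V_0\wedge V_1$-component; this is precisely membership in $[\,\langle\varepsilon_2(R^\uparrow(H_0))\rangle + \langle\varepsilon_2(R^\uparrow(H_1))\rangle + V_0\wedge V_1\,]\cap\varepsilon_2(\cG_2)$, giving the stated equality. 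The main obstacle I anticipate is bookkeeping around the degenerate projections: making sure that the cases $\dim(\pi_i(\ell)) < 2$ are correctly subsumed — both in the set description and in the embedded description, where one must check that replacing "$\varepsilon_2(R^\uparrow(H_i))$" by its linear span does not enlarge the intersection with the Grassmann variety beyond what the set description of \eqref{c2} allows. This should be resolvable because the only decomposable bivectors in $\bigwedge^2 V_i$ whose class lies in $\langle\varepsilon_2(R^\uparrow(H_i))\rangle$ and which arise as $\pi_i(x)\wedge\pi_i(y)$ are, by the annihilation argument above, exactly those with $\pi_i(\ell)\in R^\uparrow(H_i)$ or $\pi_i(\ell)$ degenerate.
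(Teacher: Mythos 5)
Your proposal is correct and follows essentially the same route as the paper's proof: the block-diagonal form of $M_u$ together with Corollary~\ref{rank Mu-cor} for part (1), the splitting $h(u\wedge x\wedge y)=\bar h_0(u_0\wedge x_0\wedge y_0)+\bar h_1(u_1\wedge x_1\wedge y_1)$ with independent tests on $V_0$ and $V_1$ for the set-theoretic description of $R^{\uparrow}(H)$, and the decomposition $\bigwedge^2V=\bigwedge^2V_0\oplus(V_0\wedge V_1)\oplus\bigwedge^2V_1$ for the embedded version. Your closing remark on why passing to the linear span of $\varepsilon_2(R^{\uparrow}(H_i))$ does not enlarge the set of decomposable bivectors is a point the paper treats more tersely, and your handling of it is sound.
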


\begin{proof}
  Put $n_0=\dim V_0$ and $n_1=\dim V_1$. Let $u\in V$ where $u=u_0+u_1\in V$, $u_0\in V_0$ and $u_1\in V_1.$ Denote by
  $M_u$ the matrix of the bilinear form $\chi_u(x,y):=h(u\wedge x\wedge y)$.
  Then, $M_u$ is a block matrix of the form
  \[ M_u=\begin{pmatrix}
      M_{u_0} & 0 \\
      0 & M_{u_1}
    \end{pmatrix} \]
where $M_{u_i}$ is the matrix representing the form $\chi_u^i(x,y):=\bar{h}_i(u\wedge x\wedge y)$ associated to the hyperplane $H_i$ of $\cG_3(V_i).$

  \par
    For any $x,y,u\in V$ with $x=x_0+x_1$, $y=y_0+y_1$, $u=u_0+u_1$ and $x_i,y_i,u_i\in V_i$, we have by definition of decomposable hyperplane,
  \[ \chi_u(x,y)=x^TM_uy=x_0^TM_{u_0}y_0+
    x_1^TM_{u_1}y_1. \]
   By Corollary~\ref{rank Mu-cor}, $\rank(M_{u_i})\leq n_i-1$.
So, $\rank M_{u}\leq (n_0-1)+(n_1-1)=n-2$.
 By Corollary~\ref{corollary- H pole}, $[u]=[u_0+u_1]$ is a pole.
  \par
  A line $\ell=\langle x,y\rangle$ is in the upper radical $R^{\uparrow}(H)$ if,
  and only if, for any choice of $u\in V$ we have $\chi_u(x,y)=0$.
  Since
  \[ \chi_u(x,y)=\chi_{u_0}(x_0,y_0)+\chi_{u_1}(x_1,y_1), \]
  where $x_i,y_i\in V_i$ and $x=x_0+x_1$, $y=y_0+y_1$,
  we have $\ell\in R^{\uparrow}(H)$ if and only if for all $u_i\in V_i$ and $i=0,1$,
  $\chi_{u_i}(x_i,y_i)=0.$
  This holds if and only if $(\pi_0(\ell)\in R^{\uparrow}(H_0) \mbox{ or }
    \dim(\pi_0(\ell))<2) \mbox{ and } (\pi_1(\ell)\in R^{\uparrow}(H_1) \mbox{ or }
    \dim(\pi_1(\ell))<2).$
    The first part of claim~\ref{c2} is proved.

  Consider the following subspace of $\PG(\bigwedge^3 V)$
  \[ \cL:=[\varepsilon_2(R^{\uparrow}(H_0))+\varepsilon_2(R^{\uparrow}(H_1))+V_0\wedge V_1] \]
  where  $V_0\wedge V_1:=\langle v_0\wedge v_1: v_0\in V_0, v_1\in V_1\rangle.$

We claim that any point in  $\varepsilon_2(R^{\uparrow}(H_0))+\varepsilon_2(R^{\uparrow}(H_1))+V_0\wedge V_1 $ is in $\varepsilon_2(R^{\uparrow}(H)).$

\noindent Since $R^{\uparrow}(H_i)\subseteq R^{\uparrow}(H)$ (for $i=0,1$) and $\varepsilon_2^{-1}((V_0\wedge V_1)\cap \varepsilon_2(\cG_2(V)))\subseteq R^{\uparrow}(H)$, by the first part of claim~\ref{c2}, 
the inclusion
  $\cL\subseteq\varepsilon_2(R^{\uparrow}(H))$ is immediate.

 Suppose now $\langle (x_0+x_1)\wedge (y_0+y_1)\rangle\in\varepsilon_2(R^{\uparrow}(H))$.
We can write (by the first part of claim~\ref{c2})
  \[ (x_0+x_1)\wedge(y_0+y_1)=\underbrace{(x_0\wedge y_0)}_{\in\varepsilon_2(R^{\uparrow}(H_0))}+
      \underbrace{(x_0\wedge y_1)+(x_1\wedge y_0)}_{\in V_0\wedge V_1}+\underbrace{(x_1\wedge y_1)}_{\in \varepsilon_2(R^{\uparrow}(H_1))}\in\cL. \]
    The thesis follows.
\end{proof}

 We remark that, with some slight abuse of notation,
the extension $\ext_{V_1}(H_0)$ of an hyperplane $H_0$ can always
be regarded as a special case of a block decomposable hyperplane, where the
form $\bar{h}_1$ defined over $V_1$ is identically null.

The definition given for block decomposable hyperplane
arising from two hyperplanes $H_0$ and $H_1$ can be extended by induction to the definition of block decomposable hyperplane $\dec(H_0,\cdots, H_{n-1})$ arising from  $n$ hyperplanes $H_i$ ($0\leq i\leq n-1$), where $H_i$ is a hyperplane of $\cG_3(V_i)$ and $V=\oplus_i V_i.$

\bigskip

In general, given two linear subspaces $V_0$, $V_1$ of $V$ such that $V=V_0\oplus V_1$,
and given two hyperplanes $H_0$ and $H_1$ of $\cG_3(V_0)$
and $\cG_3(V_1)$, there exist several
possible hyperplanes $H$ of $\cG_3(V_0\oplus V_1)$ which are block decomposable and arise from $H_0$ and $H_1$,
namely all of those induced by the forms $h_{\alpha,\beta}:=\alpha h_0+\beta h_1$
with $\alpha,\beta\in\KK\setminus\{0\}$. Even if all these hyperplanes are in general neither
equivalent nor nearly equivalent, they turn out to be always geometrically equivalent and
their geometry of poles depends only on the geometries of $H_0$ and $H_1$.



\section{Characterization of the geometry of poles}
\label{sec4}
In this section we will prove our Theorems~\ref{main theorem 1},~\ref{ts} and~\ref{main theorem 2}. As in  the previous sections, let $E:=(e_i)_{i=1}^n$ be a given basis of $V.$
Let $H$ be a given hyperplane of $\cG_3(V)$ and $\cP(H)=(P(H),R^{\uparrow}(H))$ be the geometry of poles of $H.$
In Section~\ref{determination of points and lines} we have explained how to algebraically describe the pointset $P(H)$ and the lineset $R^{\uparrow}(H)$ of the geometry of poles associated to $H.$
The main steps to describe $\cP(H)$ are the following:
\begin{itemize}
\item Consider the bilinear form $\chi_u$ associated to the trilinear form defining $H$ and write the matrix $M_u$ representing $\chi_u$. This is done in Table~\ref{tab2} for forms of rank up to $6$ and in Table~\ref{tab3} for forms of rank $7.$ Recall that for $\dim(V)\leq 7$ and $\KK$ perfect with cohomological dimension at most $1$, all trilinear forms are classified: they are listed in Table~\ref{Tab F}.
\item By Theorem~\ref{lp}, we know that the set of poles is either the pointset of $\PG(V)$ or an algebraic variety. If $\dim(V)=6$ all points of $\PG(V)$ are poles, for any hyperplane $H$ of $\cG_3(V).$ If $\dim(V)=7$, to get the equations describing the variety $P(H)$ we rely on Corollary~\ref{corollary- H pole}, which gives algebraic conditions on $(u_i)_{i=1}^n$ for $[(u_i)_{i=1}^n]$ to be a $H$-pole.
  In the second column of Table~\ref{tab_n7} we have written down those equations, according to the type of $H.$
  \item To describe the lines of $\cP(H)$ we rely on the last part of Section~\ref{determination of points and lines}. In particular, $\ell:=[x,y]\in R^{\uparrow}(H)$ if and only if the functional $\tilde{h}_{xy}$ described in Equation~\eqref{eq2} is the null functional.
This immediately reads as some linear equations in the Pl\"ucker coordinates $|x,y|_{ij}$ of the line $\ell$.
The results of these (straightforward) computations are reported  in the third column of Tables~\ref{tab_n6}
 for forms of rank at most $6$ and Table~\ref{tab_n7} for forms of
rank $7$.
\end{itemize}
So, Tables~\ref{Tab F}, \ref{tab2}, \ref{tab3}, \ref{tab_n6}, \ref{tab_n7} provide an algebraic description of points and lines of $\cP(H),$ for any hyperplane $H$ of $\cG_3(V).$
In the remainder of this section we shall
also provide a geometrical description of $\cP(H).$

\subsection{Hyperplanes arising from forms of rank at most $6$}
\subsubsection{Hyperplanes of type $T_1$}
A hyperplane $H$ of $\cG_3(V)$ of type $T_1$ is defined by a trilinear form of rank $3$ equivalent to $h=\underline{123}$, see the first row of Table~\ref{Tab F}.

Suppose $\dim(V)\geq 4$ and let $(e_i)_{i=1}^n$ be a given basis of $V.$ Then $\Rad(h)=\langle e_i\rangle_{i\geq 4}.$
According to Section~\ref{extensions}, $H$ is a trivial hyperplane $\ext_{\Rad(h)}(\emptyset)$ centered at $\Rad(h).$
By Proposition~\ref{ext hyp sing}, the set of $\ext_{\Rad(h)}(\emptyset)$-poles is the whole pointset of $\PG(V)$ and the lines of the geometry of poles, i.e. the elements in the upper radical $R^{\uparrow}(\ext_{\Rad(h)}(\emptyset))$, are those lines of $\PG(V)$ meeting $\Rad(h)$ non-trivially.
When $n=\dim(V)\leq 6$, this proves part~\ref{m1p1} of Theorem~\ref{main theorem 1}.

  \subsubsection{Hyperplanes of type $T_2$}
A hyperplane $H$ of $\cG_3(V)$ of type $T_2$ is defined by a trilinear form of rank $5$ equivalent to $h=\underline{123}+\underline{145}$, see the second row of Table~\ref{Tab F}.

Suppose $\dim(V)>5.$ Then $\dim(\Rad(h))\geq 1.$ Let $V=\Rad(h)\oplus V'.$
By Section~\ref{extensions}, $H$ is a trivial  extension $\ext_{\Rad(h)}(H')$ of a hyperplane $H'$ of $\cG_3(V').$ By Theorem~\ref{ext hyp th}, we can assume without loss of generality $V'=\langle e_i\rangle_{i=1}^5.$ Put $V_0:=\langle e_i\rangle_{i=2}^5$ and consider the hyperplane $H_0$ of $\cG_2(V_0)$ defined by the functional $\underline{23}+\underline{45}.$
By Section~\ref{expansions}, $H'$ is the expansion $\expa(H_0)$ of $H_0.$ By Subsection~\ref{symp hyp sec}, since the geometry $\cS(H_0)=(G_1(V_0), H_0)$ is a non-degenerate symplectic polar space, the geometry of poles of $\expa(H_0)$ coincides precisely with $\cS(H_0)$. Hence, if $\dim(V)>5$, $H$ is a trivial extension $\ext_{\Rad(h)}(\expa(H_0))$ of a symplectic hyperplane $\expa(H_0).$ The $H$-poles are all the points of $\PG(V)$ and the lines of the geometry of poles are all the lines $\ell$ of $\PG(V)$ meeting $\Rad(h)$ non-trivially or such that $\pi(\ell)\in H_0$ where $\pi$ is the projection onto $V_0$ along $\Rad(h).$

If $\dim(V)=5$ then $H$ is the expansion $\expa(H_0)$  of a non-degenerate symplectic polar space $\cS(H_0)$ defined by the functional $\underline{23}+\underline{45}$ in $V_0=\langle e_i\rangle_{i=2}^5$. By Subsection~\ref{symp hyp sec}, the geometry of poles of $H$ coincides with the symplectic polar space $\cS(H_0).$

Part~\ref{m1p2} of Theorem~\ref{main theorem 1} is proved.

\subsubsection{Hyperplanes of type $T_3$}
A hyperplane $H$ of $\cG_3(V)$ of type $T_3$ is defined by a trilinear form of rank $6$ equivalent to $h=\underline{123}+\underline{456}$, see the third row of Table~\ref{Tab F}.

Suppose $\dim(V)=6$. Let $(e_i)_{i=1}^6$ be a given basis of $V$. Put $V_0=\langle e_1,e_2,e_3\rangle$ and $V_1=\langle e_4,e_5,e_6\rangle$. Clearly, $V=V_0\oplus V_1.$ For $i=0,1$, denote by $\bar{h}_i:=\underline{(3i+1)(3i+2)(3i+3)}$ the trilinear form induced by the restriction of $h$ to $\bigwedge^3 V_i.$ By Section~\ref{decomposable}, $H$ is a decomposable hyperplane $\dec(H_0,H_1)$ of $\cG_3(V)$ arising from the hyperplanes $H_i$, $i=0,1$, of $\cG_3(V_i)$ defined by the forms $\bar{h}_i.$
Since $R^{\uparrow}(H_i)=\emptyset$, by Theorem~\ref{t3},
all points of $\PG(V)$ are elements of the geometry of poles $\cP(H)$ and the lines of
$\cP(H)$ are exactly those lines of $\PG(V)$ intersecting both $\PG(V_0)$ and $\PG(V_1).$
This proves part~\ref{m1p3} of Theorem~\ref{main theorem 1}.

Suppose $\dim(V)>6$. Let $(e_i)_{i\geq 1}$ be a given basis of $V$. Then $\Rad(h)=\langle e_i\rangle_{i\geq 7}.$ By last part of Section~\ref{extensions}, $H$ is a trivial extension $\ext_{\Rad(h)}(\dec(H_0,H_1))$ of a decomposable hyperplane  $\dec(H_0,H_1)$ of $\cG_3(V')$ where $V=\Rad(h)\oplus V'$ and $H_i$, $i=0,1$, are the hyperplanes of $\cG_3(V_i)$, $V_i=\langle e_{3i+1},e_{3i+2}, e_{3i+3} \rangle$, $V'=V_1\oplus V_2$, defined by  $\underline{(3i+1)(3i+2)(3i+3)}$.

\subsubsection{Hyperplanes of type $T_4$}
A hyperplane $H$ of $\cG_3(V)$ of type $T_4$ is defined by a trilinear form of rank $6$ equivalent to $h=\underline{162}+\underline{243}+\underline{135}$, see the fourth row of Table~\ref{Tab F}.

Suppose $\dim(V)=6.$ For any $u\in V$, $\rank(M_u)\leq 4$ (see Table~\ref{tab2} for the description  of the matrix $M_u$). If $(e_i)_{i=1}^6$ is a basis of $V$ and
$V=V_0\oplus V_1$ with
$V_0=\langle e_1,e_2,e_3\rangle$ and $V_1=\langle e_4,e_5,e_6\rangle$, by a direct computation we have that the elements of $V_1$ are poles of degree $3$ while all remaining poles have degree $1$.

Let $\ell=[u,v]$ be a line of $\PG(V)$. By the forth row of Table~\ref{tab_n6}, $\ell\in R^{\uparrow}(H)$ if and only if its Pl\"{u}cker coordinates satisfy $6$ linear equations. More explicitly, we have that $\ell=[u,v]\in R^{\uparrow}(H)$ if and only if $u=\bar{u}_0+\bar{u}_1$ and $v=\omega(\bar{u}_0)\in V_1$ with $\bar{u}_0\in V_0$, $\bar{u}_1\in V_1$ and
$\omega\colon V \rightarrow V$,
$\omega(u_1,u_2,u_3,u_4,u_5,u_6)=(u_4,u_5,u_6,u_1,u_2,u_3).$
Note that $\omega$ interchanges $V_0$ and $V_1$.
Indeed,  if  $u=\bar{u}_0+\bar{u}_1$ and $v=\bar{v}_0+\bar{v}_1$ with $\bar{u}_0, \bar{v}_0\in V_0$ and $\bar{u}_1, \bar{v}_1\in V_1$, the Pl\"{u}cker coordinates of the line $\ell=[u,v]$, satisfy the equations $|u,v|_{12}=0, |u,v|_{13}=0,|u,v|_{23}=0$ if and only if $\bar{v}_0=\lambda \bar{u}_0$ with $\lambda\in \KK.$
Hence $\ell=[u,v]=[u,v-\lambda u]=[\bar{u}_0+\bar{u}_1, \bar{v}_1']$ with $\bar{v}_1'=\bar{v}_1-\lambda \bar{u}_1\in V_1.$

The remaining three equations $|x,y|_{26}-|x,y|_{35}=0, |x,y|_{16}-|x,y|_{34}=0, |x,y|_{24}-|x,y|_{15}=0$ are satisfied by the  Pl\"{u}cker coordinates of $\ell=[u,v]=[\bar{u}_0+\bar{u}_1, \bar{v}_1']$ if and only if either $\bar{u}_0$ is the null vector and in this case $\ell=[\bar{u}_1, \bar{v}_1']\subset V_1$ or $[\bar{v}
_1']=[(0,0,0,u_1,u_2,u_3)]$ where  $[u]=[(u_1,u_2,u_3,u_4,u_5,u_6)].$
\par
This proves part~\ref{m1p4} of Theorem~\ref{main theorem 1}.

Suppose $\dim(V)>6$. Let $(e_i)_{i\geq 1}$ be a given basis of $V$. Then $\Rad(h)=\langle e_i\rangle_{i\geq 7}.$ By last part of Section~\ref{extensions}, $H$ is a trivial extension $\ext_{\Rad(h)}(H')$ of a hyperplane $H'$ of $\cG_3(V')$ where $V=\Rad(h)\oplus V'$, $V'=\langle e_i\rangle_{i=1}^6$ and $H'$ is defined by a trilinear form equivalent to
$\underline{162}+\underline{243}+\underline{135}.$
A description of the geometry of poles of $H$ follows from Theorem~\ref{extended hyperplane} and the already done case of hyperplanes of type $T_4$ of $\cG_3(V)$ for $\dim(V)=6.$

\subsubsection{Hyperplanes of type $T_{10,\lambda}^{(i)}$}
A hyperplane $H$ of $\cG_3(V)$ of type $T_{10,\lambda}^{(i)}$ is defined by a trilinear form of rank $6$ as written in row $10$ or $11$ of Table~\ref{Tab F}, according as $\chr(\KK)$ is odd or even.
We remark that forms of type $T_{10,\lambda}^{(1)}$ make sense also in even characteristic, provided
that the field $\KK$ is not perfect.

Let $\dim(V)=6.$ For any $u\in V$, the matrix $M_u$ representing the bilinear alternating form $\chi_u$ associated to $H$ is written in Table~\ref{tab2}. We have that $\rank(M_u)\leq 4$ for every $u\in V$, i.e. every point of $\PG(V)$ is a pole. Actually, we will prove that $\rank(M_u)=4$ for any $u\in V$, i.e. every point of $\PG(V)$ is a pole of degree $1$, equivalently, $R^{\uparrow}(H)$ is a line spread of $\PG(V).$

Consider first a hyperplane of type $T_{10,\lambda}^{(1)}.$ Suppose by way of contradiction that  $\rank(M_u)<4,$ i.e. all minors of order 4 vanish. With $u=(u_1,\ldots,u_6)$, take the three $4\times 4$ principal
minors of $M_u$ given by
\[ \begin{array}{c|c}
     \multicolumn{2}{c}{\text{Case $i=1$}} \\[5pt]
     \begin{array}{c}
      \text{Principal submatrix of $M_u$} \\
       \text{corresponding to rows/columns}
     \end{array} & \text{Value of the minor} \\ \hline
     1,2,4,5 & \lambda^2(\lambda u_6^2-u_3^2)^2 \\
     1,3,4,6 & \lambda^2(\lambda u_5^2-u_2^2)^2 \\
     2,3,5,6 & \lambda^2(\lambda u_4^2-u_1^2)^2.
   \end{array} \]
By the third column of Table~\ref{Tab F} corresponding to $T_{10,\lambda}^{(1)}$, we have that $p_{\lambda}(t)=t^2-\lambda$ is an irreducible polynomial in $\KK[t].$
Hence the minors mentioned are null if and only if $u_i=0$ for all $i=1,\dots, 6.$

 We argue in a similar way for case
 $T_{10,\lambda}^{(2)}$ with $\chr(\KK)=2$,
 by choosing the minors of $M_u$ given by
 \[ \begin{array}{c|c}
      \multicolumn{2}{c}{\text{Case $i=2$}} \\[5pt]
          \begin{array}{c}
      \text{Principal submatrix of $M_u$} \\
       \text{corresponding to rows/columns}
     \end{array} & \text{Value of the minor} \\ \hline
     1,2,4,5 & (u_6^2+\lambda u_3u_6+u_3^2)^2 \\
     1,3,4,6 & (u_5^2+\lambda u_2u_5+u_2^2)^2 \\
     2,3,5,6 & (u_4^2+\lambda u_1u_4+u_1^2)^2.
   \end{array} \]
 By the third column of Table~\ref{Tab F} corresponding to $T_{10,\lambda}^{(2)}$, we have that  $p_{\lambda}(t):=t^2+\lambda t+1$ is irreducible in  $\KK[t]$; hence
 $\rank M_u=4$ unless $u=(0,\ldots,0)$.
 So $R^{\uparrow}(H)$ is a line-spread of $\PG(V)$.

\begin{lemma}
  \label{ldes}
  Let $H$ be a hyperplane of $\cG_3(V)$ with $\dim V=6$ whose upper radical
  $R^{\uparrow}(H)$ is a line-spread of $\PG(V)$.
  Then $R^{\uparrow}(H)$ is a Desarguesian line-spread of $\PG(V)$.
\end{lemma}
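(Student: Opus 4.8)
The plan is to combine Revoy's classification of rank-$6$ alternating trilinear forms over an arbitrary field with the explicit computations of the previous subsections and of the tables. First I would reduce to the nondegenerate case: if $h$ is the trilinear form defining $H$ and $\rank(h)<6$, then $\Rad(\chi)\neq 0$, and picking any $[v]\in R_\downarrow(H)=[\Rad(\chi)]$ one sees that every line of $\PG(V)$ through $[v]$ lies in $R^\uparrow(H)$ (indeed $\chi(v,x,y)=0$ for all $x,y$), so $R^\uparrow(H)$ cannot be a spread. Hence $\rank(h)=6$, $\chi$ is nondegenerate, and by~\cite{Revoy79} the form $h$ has type $T_3$, $T_4$ or $T^{(i)}_{10,\lambda}$. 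The first two possibilities are ruled out by the cases already treated: for type $T_3$ a line $[p,y]$ with $[p]\in[V_0]$ fixed and $[y]$ varying in $[V_1]$ shows that several elements of $R^\uparrow(H)$ pass through a point of $[V_0]$; for type $T_4$ the points of $[V_1]$ are poles of degree $3$. In either case $R^\uparrow(H)$ is not a spread, so $h$ must be of type $T^{(i)}_{10,\lambda}$ for some $i\in\{1,2\}$ and some $\lambda$ making $p_\lambda(t)$ irreducible over $\KK$.

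It then remains to show that the line-spread attached to a form of type $T^{(i)}_{10,\lambda}$ is Desarguesian. Set $\mathbb{F}:=\KK[\mu]$ with $\mu$ a root of $p_\lambda(t)$, a quadratic extension of $\KK$ (separable if $i=2$, possibly inseparable when $i=1$ and $\chr(\KK)=2$, which does not affect the argument). Using the description of $R^\uparrow(H)$ recorded in Table~\ref{tab_n6} I would exhibit a $\KK$-linear operator $\hat\mu\in\GL(V)$ with minimal polynomial $p_\lambda(t)$, so that $\KK[\hat\mu]\cong\mathbb{F}$ makes $V$ into a $3$-dimensional $\mathbb{F}$-vector space, and then check that every line of $R^\uparrow(H)$ is $\hat\mu$-invariant. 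Once this is done, each spread line, being a $2$-dimensional $\KK$-subspace that is an $\mathbb{F}$-module, is a $1$-dimensional $\mathbb{F}$-subspace; since conversely the unique $\mathbb{F}$-line through any $\KK$-point $[p]$ contains the spread line $\pi_p$ through $[p]$ and has the same $\KK$-dimension as $\pi_p$, the two coincide. Thus $R^\uparrow(H)$ is precisely the set of $1$-dimensional $\mathbb{F}$-subspaces of $V$, that is, the Desarguesian line-spread of $\PG(5,\KK)=\PG(V)$ induced by the extension $\mathbb{F}/\KK$.

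The routine but essential part is this last verification, i.e.\ producing $\hat\mu$ and checking $\hat\mu$-invariance of the spread directly from the Pl\"ucker equations in Table~\ref{tab_n6}; this is where the distinction between the cases $i=1$ and $i=2$ (and between the two shapes of $p_\lambda(t)$ in odd and even characteristic) comes in, but in each case it is a short coordinate computation analogous to the one already carried out for type $T_4$. Alternatively, one may avoid coordinates altogether by recognising the form of type $T^{(i)}_{10,\lambda}$ as $\chi(x,y,z)=\varphi(x\wedge_{\mathbb F}y\wedge_{\mathbb F}z)$ for a nonzero $\KK$-linear functional $\varphi\colon\bigwedge^3_{\mathbb F}V\to\KK$; then the vanishing of $x\wedge_{\mathbb F}y\wedge_{\mathbb F}z$ on $\mathbb{F}$-dependent triples shows at once that the $\mathbb{F}$-lines lie in $R^\uparrow(H)$, while the surjectivity of $u\mapsto u\wedge_{\mathbb F}x\wedge_{\mathbb F}y$ onto $\bigwedge^3_{\mathbb F}V$ for $\mathbb F$-independent $x,y$ shows no other line occurs. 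This is exactly the link between $T^{(i)}_{10,\lambda}$ and quadratic extensions already implicit in~\cite{Revoy79}, which Theorem~\ref{ts} is meant to make precise.
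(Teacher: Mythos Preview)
Your argument is correct, but the paper proves the lemma by an entirely different, classification-free route. Rather than invoking Revoy to pin down the type of $h$, the paper gives a direct synthetic proof that the spread $\cS=R^{\uparrow}(H)$ is \emph{normal}: for any two spread lines $\ell_1,\ell_2$ and any plane $\pi$ in $\Sigma=\ell_1+\ell_2$, one writes $\pi=[p_1,p_2,q_1+q_2]$ with $p_i,q_i\in\ell_i$ and observes $h(p_1\wedge p_2\wedge(q_1+q_2))=0$ since $h$ vanishes on every plane through $\ell_1$ or $\ell_2$; hence every plane of $\Sigma$ lies in $H$, so $\Sigma$ belongs to the dual spread $\cS^*$. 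A short duality argument then shows that the spread line through any point of $\Sigma$ is contained in $\Sigma$, i.e.\ $\cS$ induces a spread on each such $\Sigma$. The conclusion follows from the theorem of Barlotti and Cofman~\cite{BC72} that normal spreads are Desarguesian.

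The trade-offs are clear. The paper's argument is intrinsic: it uses only the defining property of $R^{\uparrow}(H)$ and the trilinearity of $h$, so it would survive unchanged if one did not have Revoy's list at hand, and it isolates a clean geometric mechanism (normality via the vanishing of $h$). Your approach, by contrast, leans on the classification but pays it back with extra information: you not only obtain that the spread is Desarguesian, you identify the coordinatising field as $\KK[\mu]$ with $p_\lambda(\mu)=0$, which is exactly what is asserted in part~\ref{m1p5} of Theorem~\ref{main theorem 1} and is not explicitly extracted by the paper's proof of the lemma. Your coordinate-free alternative at the end (realising $\chi$ as a $\KK$-linear functional on $\bigwedge^3_{\mathbb F}V$) is the cleanest way to package that identification.
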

\begin{proof}
For simplicity of notation, denote by $\cS$ the line-spread of $\PG(V)$ induced by $H.$
 Then, by duality, $H$ induces also a line
  spread $\cS^*$ in the dual space  $\PG(V^*)$, where $V^*$ is the dual of $V$.
  In particular, a  $4$-dimensional vector space $\Sigma$ is in $\cS^*$
  if and only if all planes contained
  in $\Sigma$ are elements of $H$.
We will prove that $\cS$ is a {\it normal spread}, i.e. given any
  two distinct elements $\ell_1,\ell_2\in\cS$ and $\Sigma=\ell_1+\ell_2$,
   the set $\cS_{\Sigma}:=\{ \ell\in\cS: \ell\subseteq\Sigma \}$
   is a line-spread of $\Sigma$.

   Let $\pi$ be a plane contained in $\Sigma$. Then,
   $\pi\cap\ell_1\neq\emptyset\neq\pi\cap\ell_2$ and
   we can write $\pi=[p_1,p_2,q_1+q_2]$ with
   $p_1,q_1\in\ell_1$, $p_2,q_2\in\ell_2$ suitably chosen.
   Denote by $h$ a form defining the hyperplane  $H$.
   Then $h(p_1\wedge p_2\wedge (q_1+q_2))=h(p_1\wedge p_2\wedge q_1)+h(p_1\wedge p_2\wedge q_2)=0$
   as $h$ is identically zero on all planes through either $\ell_1$ or $\ell_2$.
   Hence $\pi\in H$ and, consequently, $\Sigma\in\cS^*$.

   So the $3$-dimensional projective space spanned by any two elements of $\cS$ is in $\cS^*$;
   by duality, the intersection of any two elements of $\cS^*$ is in $\cS$.
   Take now $\Sigma\in\cS^*$ and $p\in\Sigma$; denote by $\ell_p$
   the unique line of $\cS$ with $p\in\ell_p$.
   Let $\ell'\in\cS$ such that $\ell'$ is not
   contained in $\Sigma$. Then, $\Sigma'=\ell_p+\ell'\in\cS^*$ and, by the
   argument above, $\Sigma'\cap\Sigma\in\cS$.
   Since $p\in\Sigma'\cap\Sigma$, it follows that $\Sigma'\cap\Sigma=\ell_p$.
   This for all points $p\in\Sigma$; so
   $\cS_{\Sigma}$ is a spread of $\Sigma$.
Hence $\cS$ is a normal spread and by~\cite[Theorem 2]{BC72} $\cS$ is a Desarguesian line spread of $\PG(V).$
\end{proof}
Part~\ref{m1p5} of Theorem~\ref{main theorem 1} is now proved.

\medskip

\noindent{\bf Proof of Theorem~\ref{ts}.}
By Lemma~\ref{ldes}, line-spreads of $\PG(6,V)$ induced by hyperplanes of $\cG_3(V)$ are Desarguesian.
As Desarguesian line-spreads are coordinatized over division rings, there must exist a division ring ${\mathbb D}$ having dimension $2$ over $\KK$ (see \cite{BB66}) with either ${\mathbb D}$ commutative or
$\KK$ being the center of ${\mathbb D}$.
We show that ${\mathbb D}$ is commutative.
Take $a\in{\mathbb D}\setminus\KK$. Then the algebraic extension
$\KK(a)$ is a proper field extension of $\KK$ contained in ${\mathbb D}$; so
$2=[{\mathbb D}:\KK]\geq[\KK(a):\KK]\geq 2$. It follows that ${\mathbb D}=\KK(a)$ and ${\mathbb D}$
  is a field which is an algebraic extension of degree $2$ of
$\KK$. So, for ${\mathbb D}$ to exist, $\KK$ must not be quadratically closed.
\hspace{10.5cm}$\Box$

\begin{remark}
 The commutativity of ${\mathbb D}$ in the proof of Theorem~\ref{ts} is also a consequence
 of the Artin-Wedderburn theorem. We have provided a short argument.
\end{remark}

Suppose $\dim(V)>6$. Let $(e_i)_{i\geq 1}$ be a given basis of $V$. Then $\Rad(h)=\langle e_i\rangle_{i\geq 7}.$ By last part of Section~\ref{extensions}, $H$ is a trivial extension $\ext_{\Rad(h)}(H')$ of a hyperplane $H'$ of $\cG_3(V')$ where $V=\Rad(h)\oplus V'$, $V'=\langle e_i\rangle_{i=1}^6$ and $H'$ is defined by a trilinear form of type $T_{10,\lambda}^{(i)}.$ A description of the geometry of poles of $H$ follows from Theorem~\ref{extended hyperplane} and the already done case of hyperplanes of type $T_{10,\lambda}^{(i)}$ of $\cG_3(V)$ with $\dim(V)=6.$

\subsection{Hyperplanes arising from forms of rank $7$}
Throughout this section let $[u]=[(u_i)_{i=1}^7]$.
\subsubsection{Hyperplanes of type $T_5$}
   A hyperplane $H$ of $\cG_3(V)$ of type $T_5$ is defined by a trilinear form of rank $7$ equivalent to $h=\underbar{123}+\underbar{456}+\underbar{147}$, see the fifth  row of Table~\ref{Tab F}.

Suppose $\dim(V)=7.$
Straightforward computations shows that $\rank(M_u)\leq 4$ (see Table~\ref{tab3} for the description  of the matrix $M_u$)
if and only if $u_1=0$ or $u_4=0$ and $\rank(M_u)=2$ if and only if $u_1=u_4=u_5=u_6=0$ or $u_1=u_2=u_3=u_4=0.$

Let $\cS_1$ and $\cS_2$ the hyperplanes of $\PG(V)$ with equations respectively $u_1=0$ and $u_4=0$.
Denoted by $P(H)$ the set of poles of $H$, we then have $P(H)=\cS_1\cup \cS_2.$
A point $[u]$ has degree $4$ if and only if $[u]\in A_1\cup A_2$ where $A_1$ is the plane of $\PG(V)$ of equation
$u_1=u_4=u_5=u_6=0$ and $A_2$ is the plane of $\PG(V)$ of equation
$u_1=u_2=u_3=u_4=0.$

  Take $[u]=[(0,u_2,\ldots,u_7)]\in\cS_1$. In this case,
  by the equations of Table~\ref{tab_n7}, a line
  $\ell:=[u,v]$ of $\PG(V)$ through $[u]$ is in $R^{\uparrow}(H)$ if and only if $\ell\subseteq \cS_1$, intersects $A_1$ non-trivially and it is totally isotropic for the non-degenerate bilinear alternating form of $\cS_1$ defined by $\beta(u,v)=u_2v_3-u_3v_2+u_4v_7-u_7v_4+u_5v_6-u_6v_5.$
  A similar argument shows that if $[u]$ is taken in $\cS_2$, then
  any line of $R^{\uparrow}(H)$ through it meets $A_2$ and it is totally isotropic for the non-degenerate bilinear alternating form of $\cS_2$ defined by $\beta(u,v)=u_1v_7-u_7v_1+u_2v_3-u_3v_2+u_5v_6-u_6v_5.$

This proves part~\ref{m2p1} of Theorem~\ref{main theorem 2}.

\subsubsection{Hyperplanes of type $T_6$}
   A hyperplane $H$ of $\cG_3(V)$ of type $T_6$ is defined by a trilinear form of rank $7$ equivalent to $h=\underbar{152}+\underbar{174}+\underbar{163}+\underbar{243}$, see the sixth  row of Table~\ref{Tab F}.
   Suppose $\dim V=7$; straightforward computations show that $\rank(M_u)\leq 4$ (see Table~\ref{tab3}
   for the description of the matrix $M_u$) if and only if $u_1=0$.
   Let $\cS$ be the hyperplane of $\PG(V)$ of equation $u_1=0$. Then the set of the $H$-poles is
   precisely the point-set of $\cS$.
   Also, a point $[u]$ has degree $4$ if and only if $[u]\in A$ where $A$ is the plane of
   equation $u_1=u_2=u_3=u_4=0$.
   Take $u=[(0,u_2,\ldots,u_7)]\in\cS$. By the equations
   $|u,v|_{34}=|u,v|_{24}=|u,v|_{23}$,
   of  Table~\ref{tab_n7}, each line of the upper radical
   must
   intersect the plane $A$.
   By the equation $|u,v|_{25}+|u,v|_{36}+|u,v|_{47}=0$, we have that a line $\ell=[u,v]$
   is in $R^{\uparrow}(H)$ if and only if $\ell\subseteq\cS$, $\ell\cap A\neq0$ and
   $\ell$ is totally isotropic for the non-degenerate alternating form
   $\beta(u,v):=|u,v|_{25}+|u,v|_{36}+|u,v|_{47}=0$.
     This proves part~\ref{m2p2} of Theorem~\ref{main theorem 2}.

  \subsubsection{Hyperplanes of type $T_7$}
  A hyperplane $H$ of $\cG_3(V)$ of type $T_7$ is defined by a trilinear form of rank $7$ equivalent
 to $h=\underbar{146}+\underbar{157}+\underbar{245}+\underbar{367},$
 see the seventh  row of Table~\ref{Tab F}.
 Suppose $\dim V=7$; straightforward computations show that $\rank(M_u)\leq 4$ (see Table~\ref{tab3}
 for the description of the matrix $M_u$) if and only if $u_5u_7+u_4u_6=0$.
 Let $A$ be the plane of $\PG(V)$ of equations $u_4=u_5=u_6=u_7=0$ and denote by $\cQ$ the hyperbolic
 quadric of equation $u_5u_7+u_4u_6=0$ embedded in the subspace $W$ of equations $u_1=u_2=u_3=0$.
 The set of $H$-poles is the point-set of the quadratic cone of $\PG(V)$ with vertex $A$ and
 basis $\cQ$. Also, a point $[u]$ has degree $4$ if and only if $[u]\in\cC$ where $\cC$ is
 the conic of $A$ with equation $u_1^2-u_2u_3=0$.
 Using the equations of Table~\ref{tab_n7}, it is possible to associate to any
 $[x]=[(st,t^2,s^2,0,0,0,0)]\in\cC$
 a unique line $\ell_x=[(0,0,0,s,0,0,t),(0,0,0,0,s,-t,0)]$ of $\cQ$. \par

 With $[x]\in\cC$, denote by $\Res_{\langle A,\ell_x\rangle}(x)$ the projective geometry whose points are
 all the $2$-dimensional vector spaces
 through $x$ in the $5$-dimensional vector space $\langle A,\ell_x\rangle$ and whose
 lines are the $3$-dimensional vector spaces of $\langle A,\ell_x\rangle$ through $x$.
 Note that $\Res_{\langle A,\ell_x\rangle}(x)\cong\PG(3,\KK)$.

 \begin{prop}
   For any $[x]\in\cC$ there exists a line spread $\cS_x$ of $\Res_{\langle A,\ell_x\rangle}(x)$ such that
   $\ell\in R^{\uparrow}(H)$ if and only if $\ell\subseteq\langle x,s\rangle$ for some $x\in\cC$ and
   $s\in\cS_x$.
 \end{prop}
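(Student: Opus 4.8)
The plan is to fix the representative form $h=\underline{146}+\underline{157}+\underline{245}+\underline{367}$ of type $T_7$ and to read off $R^{\uparrow}(H)$ from the alternating bilinear forms $\chi_x$ of Equation~\eqref{eq Mu}. The key elementary remark is that, by the cyclic invariance of $h$, one has $h(u\wedge x\wedge y)=-\chi_x(u,y)=-u^{T}M_x y$ for all $u,x,y\in V$, so that
\[
 \langle x,y\rangle\in R^{\uparrow}(H)\ \Longleftrightarrow\ M_x y=0\ \Longleftrightarrow\ y\in\ker M_x ,
\]
and symmetrically $x\in\ker M_y$. Hence, for every point $[x]$, the lines of $R^{\uparrow}(H)$ through $[x]$ are precisely the lines through $[x]$ lying in the subspace $\ker M_x$.

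The first real step is to compute $\ker M_x$ for $[x]\in\cC$. Substituting $[x]=[(st,t^2,s^2,0,0,0,0)]$ into the matrix $M_u$ of Table~\ref{tab3}, a short computation shows that $M_x$ has rank $2$ and that $M_x v=0$ is equivalent to the pair of linear conditions $s\,v_7=t\,v_4$, $t\,v_5=-s\,v_6$; therefore
\[
 \ker M_x=\langle e_1,e_2,e_3\rangle\oplus\langle s e_4+t e_7\rangle\oplus\langle s e_5-t e_6\rangle=\langle A,\ell_x\rangle ,
\]
a $5$-dimensional space. (Alternatively one checks this for the single point $[e_3]$, where $M_{e_3}$ has its only nonzero entries in positions $(6,7),(7,6)$, so $\ker M_{e_3}=\langle e_1,\dots,e_5\rangle=\langle A,\ell_{e_3}\rangle$, and then transports the conclusion along the subgroup of $\GL(V)$ which stabilises the configuration $(A,W,\cQ,\cC)$ and acts transitively on $\cC$.) It follows that the lines of $R^{\uparrow}(H)$ through $[x]$ are exactly the lines through $[x]$ contained in $\langle A,\ell_x\rangle$, i.e.\ exactly the points of $\Res_{\langle A,\ell_x\rangle}(x)\cong\PG(3,\KK)$.

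Now I would build the spread. Inside $\Res_{\langle A,\ell_x\rangle}(x)$ there are two distinguished lines, namely $A/\langle x\rangle$ and $\langle x,\ell_x\rangle/\langle x\rangle$; since $\ell_x\subseteq W$ and $A\cap W=0$ one has $A\cap\langle x,\ell_x\rangle=\langle x\rangle$, so these are mutually skew lines of $\PG(3,\KK)$. I would take $\cS_x$ to be a line spread of $\PG(3,\KK)$ having $A/\langle x\rangle$ among its members (such spreads exist for every field allowed by Table~\ref{Tab F}: when a proper quadratic extension is available one may take a regular spread, otherwise one uses a spread through the two prescribed skew lines). With this choice the equivalence in the statement is obtained by combining: (i) the fact just proved that for each $[x]\in\cC$ the lines of $R^{\uparrow}(H)$ through $[x]$ are the points of $\Res_{\langle A,\ell_x\rangle}(x)$, each of which lies on a unique $s\in\cS_x$ and hence satisfies $\ell\subseteq\langle x,s\rangle$; (ii) the observation that the members $\langle x,s\rangle$ with $s=A/\langle x\rangle$ are the plane $A$, all of whose lines lie in $R^{\uparrow}(H)$ (immediate from the Plücker equations of Table~\ref{tab_n7}, since for $\ell\subseteq A$ every constrained coordinate vanishes); and (iii) a case analysis, again via the Plücker equations of Table~\ref{tab_n7}, showing that the remaining lines of $R^{\uparrow}(H)$ — classified according to how $\ell$ meets $A$ and the quadric $\cQ$ — are precisely those swept out by the planes $\langle x,s\rangle$.

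I expect step (iii), and in particular the requirement that the spreads $\cS_x$ be chosen \emph{coherently} as $[x]$ ranges over $\cC$, to be the main obstacle: one must ensure that the family $\{\,\langle x,s\rangle : [x]\in\cC,\ s\in\cS_x\,\}$ yields every line of the upper radical and no others, which forces the $\cS_x$ to be compatible with the regulus of $\cQ$ containing the lines $\ell_x$ and to agree on the common plane $A$. Once $\ker M_x$ is pinned down, the ``only if'' half of the equivalence is routine; the delicate point is the ``if'' half together with the exhaustiveness of the construction.
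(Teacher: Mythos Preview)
Your computation that $\ker M_x=\langle A,\ell_x\rangle$ for $[x]\in\cC$ is correct and is the right starting point; it cleanly shows that the lines of $R^{\uparrow}(H)$ through any degree-$4$ pole $[x]$ are exactly the lines through $[x]$ inside the $5$-space $\langle A,\ell_x\rangle$.

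The genuine gap is in your construction of $\cS_x$. You propose to take $\cS_x$ to be an \emph{arbitrary} line spread of $\Res_{\langle A,\ell_x\rangle}(x)$ containing $A/\langle x\rangle$, postponing to ``step~(iii)'' the verification that the planes $\langle x,s\rangle$ sweep out exactly $R^{\uparrow}(H)$. But this verification must fail for a generic choice: the ``if'' direction requires that \emph{every} line of each plane $s\in\cS_x$ (not just the lines through $[x]$) lie in $R^{\uparrow}(H)$, and that is a condition on $s$, not something every spread delivers. Concretely, at $[x]=[e_3]$ one has $\langle A,\ell_{e_3}\rangle=\langle e_1,\dots,e_5\rangle$; the plane $\langle e_3,\,e_1+e_4,\,e_2+e_5\rangle$ is skew to $A$ in the residue and could perfectly well sit in a spread together with $A$, yet it contains the line $[e_1+e_4,\,e_2+e_5]$, whose Pl\"ucker coordinate $|{\cdot}|_{45}=1$ violates the second equation of Table~\ref{tab_n7}. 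So an arbitrarily chosen spread will in general produce planes $\langle x,s\rangle$ carrying lines outside $R^{\uparrow}(H)$, and the stated equivalence breaks. You yourself flag step~(iii) as the main obstacle, but the issue is not merely ``coherence as $[x]$ varies'': already for a single $[x]$ the spread is rigidly constrained.

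The paper does not choose the spread; it lets $H$ determine it. For each pole $[p]$ of degree~$2$ the lines of $R^{\uparrow}(H)$ through $[p]$ span a plane $\pi_p$ (your $\ker M_p$), and one sets
\[
\cS_x:=\{\pi_p:[p]\in\langle A,\ell_x\rangle\setminus A\}\cup\{A\}.
\]
The argument then reduces to three short structural checks: that $[x]\in\pi_p$; that $\pi_q=\pi_p$ whenever $[q]\in\pi_p$ (so every line of $\pi_p$ lies in $R^{\uparrow}(H)$, essentially by the same mechanism $\ell\in R^{\uparrow}(H)\Leftrightarrow y\in\ker M_x$ you used); and that distinct $\pi_p,\pi_q$ meet only in $\langle x\rangle$ (otherwise some point off $A$ would acquire degree $\geq4$). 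This replaces your unfinished case analysis. Your kernel computation slots in naturally as the verification that these planes sit where they should, but the spread itself has to be the canonical one $\{\pi_p\}$ dictated by $H$, not a free choice.
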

\begin{proof}
  Suppose $x,x'\in\cC$ with $x\neq x'$. Then, $\langle A,\ell_x\rangle\cap\langle A,\ell_{x'}\rangle=A$.
  We shall now define $\cS_x$ as follows
  \[ \cS_x:=\{ \pi_p: p\in\langle A,\ell_x\rangle\setminus A \}\cup\{A\} \]
  where $\pi_p$ is the unique plane of $\PG(V)$ spanned by all the lines through $p$ in
  $R^{\uparrow}(H)$. Note that $\delta(p)=2$ and $x\in\pi_p\subseteq\langle A,\ell_x\rangle$.

  Let now $[q]$ be a point in $\pi_p$ with $\pi_p\in\cS_x$.
  If $[q]\not\in [x,p]$, then $[x,q]$ and $[p,q]$ are
  both lines in the upper radical of $H$ through $[q]$; since $\delta(q)=2$ we have $\pi_q=\pi_p$.
  If $[q]\in[x,p]$ we consider a point $[r]\not\in [x,p]$ and apply the same argument to show
  that $\pi_q=\pi_r=\pi_p$. Hence all lines of $\PG(V)$ in any plane $\pi_p\in\cS_x$ are in the upper
  radical and for any pole $p\not\in A$, the lines of the upper radical through $p$ are contained
  in $\pi_p$.

  Suppose $\pi_p$ and $\pi_q$ are two lines of $\cS_x$ with non-trivial intersection, i.e.
  $\pi_p\cap\pi_q$ is a line through $x$ not contained in $A$.
  Any point on this line would have degree at least $3$ --- a contradiction.

  Finally all lines contained in $A$ are elements of the upper radical and $A$ reads as
  one line of $\cS_x$.
\end{proof}
  This proves part~\ref{m2p3} of Theorem~\ref{main theorem 2}.

  \subsubsection{Hyperplanes of type $T_8$}
   A hyperplane $H$ of $\cG_3(V)$ of type $T_8$ is defined by a trilinear form of rank $7$ equivalent to $h=\underbar{123}+\underbar{145}+\underbar{167}$, see the eighth  row of Table~\ref{Tab F}.
   Suppose $\dim V=7$. Put $V_0=\langle e_i\rangle_{i=2}^7$ and consider the hyperplane $H_0$ of
   $\cG_2(V_0)$ define by the functional $\underbar{23}+\underbar{45}+\underbar{67}$.
   By Subsection~\ref{expansions}, $H$ is the expansion $\expa(H_0)$ of $H_0$
   and
    since the geometry $S(H_0):=(G_1(V_0),H_0)$ is a
   non-degenerate symplectic polar space, the geometry of poles of $\expa(H_0)$
   coincides with $S(H_0)$.
\par
  This proves part~\ref{m2p4} of Theorem~\ref{main theorem 2}.

  \subsubsection{Hyperplanes of type $T_9$ and $T_{12,\mu}$}
  Hyperplanes $H$ of $\cG_3(V)$ of types either $T_9$ or $T_{12,\mu}$ are defined by trilinear forms of rank $7$ nearly equivalent to $h=\underbar{123}+\underbar{456}+\underbar{147}+\underbar{257}+\underbar{367}$, see rows $9$ and $14$ of Table~\ref{Tab F}.
  Suppose $\dim V=7$; straightforward computations show that $\rank(M_u)\leq 4$ (see Table~\ref{tab3}
 for the description of the matrix $M_u$) if and only if $[u]$ is a
 point of a non-degenerate parabolic quadric $\cQ$ of $\PG(V)$. More precisely,
 each point of $\cQ$ has degree $2$.

 The equations appearing in
  Table~\ref{tab_n7} for cases $T_9$ (and $T_{12,\mu}$) are
  the same as those in \cite[\S 2.4.13]{VM98} for the standard embedding
  of the Split-Cayley  hexagon $H(\KK)$ in $\PG(6,\KK)$;
  see also \cite{Ti59}.
  Hence, the geometry of the poles of $H$ is precisely a Split-Cayley hexagon.
  For these reasons, hyperplanes of this type are called
  \emph{hexagonal}.
  This proves part~\ref{m2p5} of Theorem~\ref{main theorem 2}.

  \subsubsection{Hyperplanes of type $T_{11,\lambda}^{(i)}$}
  A hyperplane $H$ of $\cG_3(V)$ of type $T_{11,\lambda}^{(i)}$ ($i=1,2$) is defined by a trilinear form of rank $7$ as written in row $12$ or $13$ of Table~\ref{Tab F}, according as $\chr(\KK)$ is odd or even.
  As in the case $T_{10,\lambda}^{(1)}$, we remark that forms of type
  $T_{11,\lambda}^{(1)}$ may also be considered in even characteristic,
  provided that the field is not perfect.
  The geometries of poles
 arising in both cases $i=1,2$ afford a similar description.
 \par
 Suppose $\dim V=7$.
 By the proof of Theorem~\ref{lp}, straightforward computations show that $\rank(M_u)\leq 4$ if and only if $\det(M_u^{(7)})/u_7^2=0$ where
by $M_u^{(7)}$ is the submatrix of $M_u$ (see Table~\ref{tab_n7} for the
description of $M_u$) obtained by deleting its last row and column.

First consider the case $T_{11,\lambda}^{(1)}$. Then $[u]$ is a pole if
and only if its coordinates satisfy the equation $\lambda u_4^2-u_1^2=0$.
Since the polynomial $p_{\lambda}(x)=x^2-\lambda$ is irreducible in $\KK$,
the points satisfying the above equation have coordinates with $u_1=u_4=0$.
Hence, the set of poles is $\cS:=\cS_1\cap\cS_2$ where $\cS_1$ is the hyperplane
of $\PG(V)$ of equation $u_1=0$ and $\cS_2$ is the hyperplane of $\PG(V)$
of equation $u_4=0$.
Considering the $4\times 4$ principal minors of $M_u$ given by
\[ \begin{array}{c|c}
     \multicolumn{2}{c}{\text{Case $i=1$}} \\[5pt]
     \begin{array}{c}
      \text{Principal submatrix of $M_u$} \\
       \text{corresponding to rows/columns}
     \end{array}
 & \text{Value of the minor} \\ \hline
     1,2,4,5 & \lambda^2(\lambda u_6^2-u_3^2)^2 \\
     1,3,4,6 & \lambda^2(\lambda u_5^2-u_2^2)^2, \\
   \end{array} \]
we have that the only point of degree $4$ is $[e_7]$.

In the case $T_{11,\lambda}^{(2)}$, then $[u]$ is a pole if
and only if its coordinates satisfy the equation $u_4^2+\lambda u_1u_4+u_1^2=0$.
Since the polynomial $p_{\lambda}(x)=x^2+\lambda x+1$ is irreducible in $\KK$,
the points satisfying the above equation have coordinates with $u_1=u_4=0$.
Hence, the set of poles is $\cS:=\cS_1\cap\cS_2$ where $\cS_1$ is the hyperplane
of $\PG(V)$ of equation $u_1=0$ and $\cS_2$ is the hyperplane of $\PG(V)$
of equation $u_4=0$.
Considering the $4\times 4$ principal minors of $M_u$ given by
 \[ \begin{array}{c|c}
      \multicolumn{2}{c}{\text{Case $i=2$}} \\[5pt]
     \begin{array}{c}
      \text{Principal submatrix of $M_u$} \\
       \text{corresponding to rows/columns}
     \end{array}
      & \text{Value of the minor} \\ \hline
     1,2,4,5 & (u_6^2+\lambda u_3u_6+u_3^2)^2 \\
     1,3,4,6 & (u_5^2+\lambda u_2u_5+u_2^2)^2 \\
   \end{array} \]
so we have that the only point of degree $4$ is $[e_7]$ as above.

We now provide a geometric description of $R^{\uparrow}(H)$ holding for both
$i=1$ and $i=2$.
Denote by $\Res_{\cS}(e_7)$ the projective geometry whose points are all the
lines of $\cS$ through $[e_7]$ and whose lines are the planes of $\cS$ through
$[e_7]$. It is well-known that $\Res_{\cS}(e_7)\cong\PG(3,\KK)$.
Consider the following set
\[ \cF:=\{\pi_p: p\in\cS\setminus[e_7] \} \]
where $\pi_p$ is the plane of $\PG(V)$ spanned by the lines in $R^{\uparrow}(H)$.
\begin{prop}
  The set $\cF$ is a line-spread of $\Res_{\cS}(e_7)$ and
  \[ R^{\uparrow}(H)=\{ \ell\subseteq\pi_p: \pi_p\in\cF \}. \]
\end{prop}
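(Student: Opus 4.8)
\emph{Overall approach.} The plan is to reproduce, essentially verbatim, the argument already used for hyperplanes of type $T_7$, the only difference being that the conic $\cC$ of degree-$4$ poles is now collapsed to the single point $[e_7]$. First I would record, from the minor computations displayed above, that $[e_7]$ is the unique pole of degree $4$, so that $\rank(M_{e_7})=2$ while $\rank(M_p)=4$ for every $[p]\in\cS\setminus[e_7]$ (Corollary~\ref{corollary- H pole}), and that the points outside $\cS$ are smooth. Consequently, for $[p]\in\cS\setminus[e_7]$ the radical $\Rad(\chi_p)$ is $3$-dimensional, so $\pi_p:=[\Rad(\chi_p)]$ is a plane of $\PG(V)$ through $[p]$; and since $[p,q]\in R^{\uparrow}(H)$ precisely when $h(p\wedge q\wedge u)=\chi_p(q,u)=0$ for all $u$, i.e. when $q\in\Rad(\chi_p)$, the lines of $R^{\uparrow}(H)$ through $[p]$ are exactly the lines of $\pi_p$ through $[p]$. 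Thus $\pi_p$ is indeed the plane spanned by the upper-radical lines through $[p]$, and in particular $\pi_p\subseteq\cS$ and $\delta(p)=2$.

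\emph{The point $[e_7]$ lies on every $\pi_p$.} Next I would prove $[\Rad(\chi_{e_7})]=\cS$: if $[v]\in[\Rad(\chi_{e_7})]$ then $h(e_7\wedge v\wedge u)=\chi_{e_7}(v,u)=0$ for all $u$, so $[e_7,v]\in R^{\uparrow}(H)$ and $[v]$ is a pole, whence $[v]\in\cS$; since $[\Rad(\chi_{e_7})]$ and $\cS$ are projective subspaces of the same vector dimension $7-2=5$, they coincide. In particular $[e_7,p]\in R^{\uparrow}(H)$ for every $[p]\in\cS$, so $e_7\in\Rad(\chi_p)$, i.e. $[e_7]\in\pi_p$, for all $[p]\in\cS\setminus[e_7]$. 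Hence each $\pi_p$ is a line of $\Res_{\cS}(e_7)\cong\PG(3,\KK)$.

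\emph{Saturation and the spread.} Now fix $[p]\in\cS\setminus[e_7]$ and let $[q]\in\pi_p$ with $[e_7],[p],[q]$ not collinear (such $[q]$ exists and is a degree-$2$ pole, being in $\pi_p\subseteq\cS$ and distinct from $[e_7]$). Then $[e_7,q]$ and $[p,q]$ are two distinct lines of $R^{\uparrow}(H)$ through $[q]$, hence both lie in $\pi_q$; as they span $\pi_q$ and also span $\langle[e_7],[p],[q]\rangle=\pi_p$, we get $\pi_q=\pi_p$. Using a third point of $\pi_p$ off $[e_7,p]$ when $[q]\in[e_7,p]$, exactly as in the $T_7$ argument, I conclude that $\pi_q=\pi_p$ for every pole $[q]\in\pi_p$, and therefore that \emph{every} line of $\pi_p$ (each of which meets some such $[q]$) lies in $R^{\uparrow}(H)$; this gives $\{\ell\subseteq\pi_p:\pi_p\in\cF\}\subseteq R^{\uparrow}(H)$. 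Conversely, if $\ell\in R^{\uparrow}(H)$ then every point of $\ell$ is a pole, and choosing $[q]\in\ell$ with $[q]\neq[e_7]$ (a projective line has at least three points) gives $[q]\in\cS\setminus[e_7]$ and $\ell\subseteq\pi_q\in\cF$, which proves the displayed identity. Finally $\cF$ is a spread: a point $[e_7,r]$ of $\Res_{\cS}(e_7)$ lies on $\pi_r\in\cF$ by the previous step; and if it also lay on some $\pi_p\in\cF$, then $\pi_p$ and $\pi_r$ would share the line $[e_7,r]$, so picking $[s]\in[e_7,r]$ with $[s]\neq[e_7]$, both $\pi_p$ and $\pi_r$ would be planes of upper-radical lines through the degree-$2$ pole $[s]$, hence both contained in $\pi_s$, forcing $\pi_p=\pi_s=\pi_r$.

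\emph{Main obstacle.} The delicate step is the saturation one: radicality of the two lines through $[p]$ does not by itself imply radicality of a line of $\pi_p$ missing $[p]$, so one genuinely has to propagate the equality $\pi_q=\pi_p$ along $\pi_p$ using $\delta(q)=2$, with the same care about collinear triples as in the $T_7$ case. All the remaining ingredients — the form of $M_u$, the Pl\"ucker equations for $R^{\uparrow}(H)$, and the dimension count $\dim\cS=5$ — are already available in Tables~\ref{tab3} and~\ref{tab_n7} together with the computation of the degree-$4$ locus given above.
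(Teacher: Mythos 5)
Your proof is correct and follows essentially the same route as the paper: identify $\pi_p$ with the plane spanned by the upper-radical lines through a degree-$2$ pole $[p]$, show $[e_7]\in\pi_p$, propagate $\pi_q=\pi_p$ along $\pi_p$ using $\delta(q)=2$ (with the separate treatment of points on $[e_7,p]$), and rule out two members of $\cF$ sharing a line through $[e_7]$ by a degree count. Your justification that $[e_7,p]\in R^{\uparrow}(H)$ via $[\Rad(\chi_{e_7})]=\cS$ and the identification $\pi_p=[\Rad(\chi_p)]$ are slightly more explicit than the paper's, but the argument is the same.
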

\begin{proof}
Take $[p]\in\cS\setminus[e_7]$. Since $[p]$ a pole of degree $2$, then there
exists a plane $\pi_p$ with $[p]\in\pi_p$ spanned by lines of $R^{\uparrow}(H)$.
Note that $[p,e_7]\in R^{\uparrow}(H)$; so $[e_7]\in\pi_p$.
Any line $\ell\subseteq\pi_p$ is in the upper radical of $H$. Indeed,
  let now $[q]$ be a point in $\pi_p$ with $\pi_p\in\cF$.
  If $[q]\not\in [x,p]$, then $[x,q]$ and $[p,q]$ are
  both lines in the upper radical of $H$ through $[q]$;
  since $\delta(q)=2$ we have $\pi_q=\pi_p$.
  If $[q]\in[x,p]$ we consider a point $[r]\not\in [x,p]$ and apply the same argument to show
  that $\pi_q=\pi_r=\pi_p$. Hence all lines of $\PG(V)$ in any plane
  $\pi_p\in\cF$ are in the upper
  radical of $H$.

Suppose now $\pi_p,\pi_q\in\cF$ and $\pi_p\cap\pi_q=r$ where $r$ is a
line through $[e_7]$. Then any point on $r$ has degree at least $3$, a contradiction.
We have proved that the set $\cF$ is a line-spread of $\Res_{\cS}(e_7)$.
The characterization of the upper radical is now straightforward.
\end{proof}

  This proves part~\ref{m2p6} of Theorem~\ref{main theorem 2}.

\vskip1cm
 \begin{minipage}[t]{\textwidth}
Authors' addresses:
\vskip.2cm\noindent\nobreak
\centerline{
\begin{minipage}[t]{7cm}
Ilaria Cardinali\\
Department of Information Engineering and Mathematics\\University of Siena\\
Via Roma 56, I-53100, Siena, Italy\\
ilaria.cardinali@unisi.it\\
\end{minipage}\hfill
\begin{minipage}[t]{6.5cm}
Luca Giuzzi\\
D.I.C.A.T.A.M. \\
Section of Mathematics \\
University of Brescia\\
Via Branze 43, I-25123, Brescia, Italy \\
luca.giuzzi@unibs.it
\end{minipage}}
\end{minipage}

\newpage
\appendix
\section{Tables}
\label{appendix}
\label{tables}

\begin{table}[h]
\caption{Types for linear functionals on $\bigwedge^3 V$.}

\begin{small}
\[ \begin{array}[t]{c|l|c|l}
     \text{Type} & \text{Description} & \text{Rank} & \text{Special conditions, if any} \\ \hline
{}& & & \\
     T_1 & \underbar{123} & 3 & \\
     T_2 & \underbar{123}+\underbar{145} & 5 & \\
     T_3 & \underbar{123}+\underbar{456} & 6 & \\
     T_4 & \underbar{162}+\underbar{243}+\underbar{135} & 6 &  \\
     T_5 & \underbar{123}+\underbar{456}+\underbar{147} & 7 & \\
     T_6 & \underbar{152}+\underbar{174}+\underbar{163}+\underbar{243} & 7 & \\
    T_7 & \underbar{146}+\underbar{157}+\underbar{245}+\underbar{367} & 7 & \\
    T_8 & \underbar{123}+\underbar{145}+\underbar{167} & 7 & \\
     T_9 & \underbar{123}+\underbar{456}+\underbar{147}+\underbar{257}+\underbar{367} & 7 & \\
 {} & & & \\
     T_{10,\lambda}^{(1)} & \underbar{123}+\lambda(\underbar{156}+\underbar{345}+\underbar{426}) & 6 & p_\lambda(t):=t^2-\lambda \text{ irreducible in } \KK[t].\\
{} & & & \\
T_{10,\lambda}^{(2)} &\begin{array}[t]{@{}l}\underbar{126}+\underbar{153}+\underbar{234}+\\
(\lambda^2+1)\underbar{456}+\lambda(\underbar{156}+\underbar{345}+\underbar{426}) \end{array} & 6 & \begin{array}[t]{@{}l} \chr(\KK)=2 \text{ and}\\
                     p_{\lambda}(t):=t^2+\lambda t+1 \text{ irreducible in } \KK[t].\\
\end{array} \\
{}& & & \\
      T_{11,\lambda}^{(1)} & [\text{the same as at} ~ T^{(1)}_{10,\lambda}] +\underbar{147} & 7 & \text{same conditions as for} ~ T_{10,\lambda}^{(1)}\\
      T_{11,\lambda}^{(2)} & [\text{the same as at} ~ T^{(2)}_{10,\lambda}]+\underbar{147} & 7 & \text{same conditions as for} ~ T_{10,\lambda}^{(2)}\\
     T_{12,\mu} & \mu [\text{the same as at} ~ T_9] & 7 &
     p_{\mu}(t)=t^3-\mu \text{ irreducible in $\KK[t]$ } \\
   \end{array}\qquad
\]
\end{small}
\label{Tab F}
\end{table}

According to the clauses assumed on $\lambda$, types $T^{(r)}_{s,\lambda}$ ($r\in \{1,2\}$, $s\in \{10,11\}$) can be considered only when $\KK$ is not quadratically closed. Moreover, when $\lambda \neq \lambda'$ the types $T^{(r)}_{s,\lambda}$ and $T^{(r)}_{s,\lambda'}$ are different up to linear and
near equivalence, even if they might describe geometrically equivalent forms.

 It follows from Revoy~\cite{Revoy79} and Cohen and Helminck~\cite{CH88} that two functionals of types $T_i$ and $T_j$ with $1\leq i < j \leq 9$ are never nearly equivalent; a functional of type $T_i$ with $i \leq 9$ is never nearly equivalent to a functional of type $T^{(r)}_{s,\lambda}$; two functionals of type $T^{(r)}_{s,\lambda}$ and $T^{(r')}_{s',\lambda'}$ with $(r,s) \neq (r', s')$ are never nearly equivalent while two functionals of type $T^{(r)}_{s,\lambda}$ and $T^{(r)}_{s,\lambda'}$ are nearly equivalent if and only if, denoted by $\mu$ and $\mu'$ respectively a root of $p_\lambda(t)$ and a root of $p_{\lambda'}(t)$ in the algebraic closure of $\KK$, we have $\KK(\mu) = \KK(\mu')$ (see the fourth column of Table~\ref{Tab F} for the definition of $p_{\lambda}(t)$).
The forms $T_{9}$ and $T_{12,\mu}$ are not linearly equivalent; however they
are, by construction, nearly equivalent.

Also,
the forms $T_{10,\lambda}^{(i)}$ and $T_{10,\lambda'}^{(i)}$
as well as $T_{11,\lambda}^{(i)}$ and $T_{11,\lambda'}^{(i)}$ are not
in general nearly-equivalent however they are geometrically equivalent.
In particular both $T_{10,\lambda}^{(1)}$ and $T_{10,\lambda'}^{(2)}$  induce a Desarguesian
 spread on $\PG(V)$.
Note that the forms $T_{10,\lambda}^{(1)}$ exist only if $\chr(\KK)\neq2$ or
if $\chr(\KK)=2$ and $\KK$ is not perfect.
 The forms of type $T_{10,\lambda}^{(2)}$ are equivalent to form of type $T_{10,\lambda}^{(1)}$
 if $\chr(\KK)\neq 2$; however, they are
not equivalent if $\chr(\KK)=2$.


\begin{table}[ht]
  \caption{Matrices associated to forms of rank up to $6$.}
    \[\arraycolsep=1pt
      \begin{array}[t]{l|c|l}
        \text{Type\ } & \text{Rk} & \text{Matrix} \\ \hline
        T_1 & 3 &
            {\tiny \begin{pmatrix}
            0 & u_3 & -u_2 \\
            -u_3 & 0 & u_1 \\
            u_2 & -u_1 & 0
          \end{pmatrix}} \\
        T_2 & 5 &
                  {\tiny \begin{pmatrix}
                      0 & u_3 & -u_2 & u_5 & -u_4 \\
                      -u_3& 0   & u_1 & 0    & 0    \\
                      u_2 & -u_1 &0 & 0 & 0 \\
                      -u_5 & 0 & 0 & 0 & u_1 \\
                      u_4 & 0 & 0 & -u_1 & 0
                    \end{pmatrix}} \\
        T_3 & 6 &
                  {\tiny\begin{pmatrix}
                      0 & u_3 & -u_2 & 0 & 0 & 0 \\
                      -u_3 & 0 & u_1 & 0 & 0 & 0 \\
                      u_2 & -u_1 & 0 & 0 & 0 & 0 \\
                      0 & 0 & 0 & 0 & u_6 &-u_5 \\
                      0 & 0 & 0 & -u_6 & 0 & u_4 \\
                      0 & 0 & 0 & u_5 & -u_4 & 0
                    \end{pmatrix}} \\
      \end{array}\quad
      \begin{array}[t]{l|c|l}
        \text{Type\ } & \text{Rk} & \text{Matrix} \\ \hline
      T_4 & 6 & {\tiny\begin{pmatrix}
          0 & -u_6 & u_5 & 0 & -u_3 & u_2 \\
          u_6 & 0 & -u_4 &u_3 & 0 & -u_1 \\
          -u_5 & u_4 & 0 & -u_2 & u_1 & 0 \\
          0 & -u_3 & u_2 & 0 & 0 & 0 \\
          u_3 & 0 & -u_1 & 0 & 0 & 0 \\
          -u_2 & u_1 & 0 & 0 & 0 & 0
        \end{pmatrix}} \\
        T_{10,\lambda}^{(1)} & 6 & {\tiny
        \begin{pmatrix}
          0 & u_3 & -u_2 & 0 & \lambda u_6 & -\lambda u_5 \\
          -u_3 & 0 & u_1 & -\lambda u_6 & 0 & \lambda u_4 \\
          u_2 & -u_1 & 0 & \lambda u_5 & -\lambda u_4 & 0 \\
          0 & \lambda u_6 & -\lambda u_5 & 0 & \lambda u_3 & -\lambda u_2 \\
          -\lambda u_6 & 0 & \lambda u_4 & -\lambda u_3 & 0 & \lambda u_1 \\
          \lambda u_5 & -\lambda u_4 & 0 & \lambda u_2 & -\lambda u_1 & 0
        \end{pmatrix}}
      \end{array} \\
    \]
    \[\arraycolsep=1pt
      \begin{array}[t]{l|c|l}
        \text{Type\ } & \text{Rk} & \text{Matrix} \\ \hline
          T_{10,\lambda}^{(2)} & 6 & {\tiny\begin{pmatrix}
      0 & u_6 & -u_5 & 0 & \lambda u_6+u_3 & -\lambda u_5-u_2  \\
      -u_6 & 0 & u_4 & -\lambda u_6-u_3 & 0 & \lambda u_4+u_1  \\
      u_5 & -u_4 & 0 & \lambda u_5+u_2 & -\lambda u_4-u_1 & 0  \\
      0 & \lambda u_6+u_3 & -\lambda u_5-u_2 & 0 & (\lambda^2+1)u_6+\lambda u_3 & (-\lambda^2-1)u_5 -\lambda u_2  \\
      -\lambda u_6-u_3 & 0 & \lambda u_4+u_1 & -(\lambda^2+1)u_6-\lambda u_3 & 0 & (\lambda^2+1)u_4+\lambda u_1  \\
      \lambda u_5+u_2 & -\lambda u_4-u_1 & 0 & (\lambda^2+1)u_5+\lambda u_2 & -(\lambda^2+1)u_4-\lambda u_1 & 0  \\

    \end{pmatrix}}
      \end{array}\]

  \label{tab2}
\end{table}


  \begin{table}[ht]
    \caption{Matrices associated to forms of rank $7$.}
    \[\arraycolsep=1pt
      \begin{array}[t]{l|l}
        \text{Type\ } & \text{Matrix} \\ \hline
        T_5 & {\tiny \begin{pmatrix}
      0 & u_3 & -u_2 & u_7 & 0 & 0 & -u_4 \\
      -u_3 & 0 & u_1 & 0 & 0 & 0 & 0 \\
      u_2 & -u_1 & 0 & 0 & 0 & 0 & 0 \\
      -u_7 & 0   & 0    & 0 & u_6 & -u_5 & u_1 \\
      0 & 0   & 0    & -u_6 & 0 & u_4 & 0 \\
      0 & 0   & 0    & u_5 & -u_4 & 0 & 0 \\
      u_4 & 0   & 0    & -u_1 & 0 & 0 & 0 \\
    \end{pmatrix} } \\
        T_6 & {\tiny
                  \begin{pmatrix}
      0 & -u_5 & -u_6 & -u_7 & u_2 & u_3 & u_4 \\
      u_5 & 0 & -u_4 & u_3 & -u_1 & 0 & 0 \\
      u_6 & u_4 & 0 & -u_2 & 0 & -u_1 & 0 \\
      u_7 & -u_3   & u_2    & 0 & 0 & 0 & -u_1 \\
      -u_2 & u_1   & 0    & 0 & 0 & 0 & 0 \\
      -u_3 & 0   & u_1    & 0 & 0 & 0 & 0 \\
      -u_4 & 0   & 0    & u_1 & 0 & 0 & 0 \\
    \end{pmatrix}} \\
        T_7 & {\tiny
                  \begin{pmatrix}
      0 & 0 & 0 & u_6 & u_7 & -u_4 & -u_5 \\
      0 & 0 & 0 & u_5 & -u_4 & 0 & 0 \\
      0 & 0 & 0 & 0 & 0 & u_7 & -u_6 \\
      -u_6 & -u_5   & 0    & 0 & u_2 & u_1 & 0 \\
      -u_7 & u_4   & 0    & -u_2 & 0 & 0 & u_1 \\
      u_4 & 0   & -u_7    & -u_1 & 0 & 0 & u_3 \\
      u_5 & 0   & u_6   & 0 & -u_1 & -u_3 & 0 \\
    \end{pmatrix}} \\
      \end{array}\qquad
      \begin{array}[t]{l|l}
        \text{Type\ } & \text{\ Matrix} \\ \hline
                T_8 & {\tiny
                  \begin{pmatrix}
      0 & u_3 & -u_2 & u_5 & -u_4 & u_7 & -u_6 \\
      -u_3 & 0 & u_1 & 0 & 0 & 0 & 0 \\
      u_2 & -u_1 & 0 & 0 & 0 & 0 & 0 \\
      -u_5 & 0  & 0    & 0 & u_1 & 0 & 0 \\
      u_4 & 0   & 0    & -u_1 & 0 & 0 & 0 \\
      -u_7 & 0   & 0    & 0 & 0 & 0 & u_1 \\
      u_6 & 0   & 0   & 0 & 0 & -u_1 & 0 \\
    \end{pmatrix}} \\
        T_9 & {\tiny
              \begin{pmatrix}
                0 & u_3 & -u_2 & u_7 & 0 & 0 & -u_4 \\
                -u_3 & 0 & u_1 &0 & u_7 & 0 & -u_5 \\
                u_2 & -u_1 & 0 & 0 & 0 & u_7 & -u_6 \\
                -u_7 & 0 & 0 & 0 & u_6 & -u_5 & u_1 \\
                0 & -u_7 & 0 & -u_6 & 0 & u_4 & u_2 \\
                0 & 0 & -u_7 & u_5 & -u_4 & 0 & u_3 \\
                u_4 & u_5 & u_6 & -u_1 & -u_2 & -u_3 & 0
              \end{pmatrix}} \\
                T_{11,\lambda}^{(1)} & {\tiny
                               \begin{pmatrix}

      0 & u_3 & -u_2 & u_7 & \lambda u_6 & -\lambda u_5 &  -u_4 \\
      -u_3 & 0 & u_1 & -\lambda u_6 & 0 & \lambda u_4 & 0 \\
      u_2 & -u_1 & 0 & \lambda u_5 & -\lambda u_4 & 0 & 0 \\
      -u_7 & \lambda u_6 & -\lambda u_5 & 0 & \lambda u_3 & -\lambda u_2 & u_1 \\
      -\lambda u_6 & 0 & \lambda u_4 & -\lambda u_3 & 0 & \lambda u_1 & 0 \\
      \lambda u_5 & -\lambda u_4 & 0 & \lambda u_2 & -\lambda u_1 & 0 & 0 \\
      u_4 & 0 & 0 & -u_1 & 0 & 0 & 0 \\
    \end{pmatrix}} \\
      \end{array} \]
    \[
      \arraycolsep=1pt
      \begin{array}[t]{l|l}
        \text{Type\ } & \text{Matrix} \\ \hline
        T_{11,\lambda}^{(2)} & {\tiny     \begin{pmatrix}
      0 & u_6 & -u_5 & u_7 & \lambda u_6+u_3 & -\lambda u_5-u_2 &  -u_4 \\
      -u_6 & 0 & u_4 & -\lambda u_6-u_3 & 0 & \lambda u_4+u_1 & 0 \\
      u_5 & -u_4 & 0 & \lambda u_5+u_2 & -\lambda u_4-u_1 & 0 & 0 \\
      -u_7 & \lambda u_6+u_3 & -\lambda u_5-u_2 & 0 & (\lambda^2+1)u_6+\lambda u_3 & (-\lambda^2-1)u_5 -\lambda u_2 & u_1 \\
      -\lambda u_6-u_3 & 0 & \lambda u_4+u_1 & -(\lambda^2+1)u_6-\lambda u_3 & 0 & (\lambda^2+1)u_4+\lambda u_1 & 0 \\
      \lambda u_5+u_2 & -\lambda u_4-u_1 & 0 & (\lambda^2+1)u_5+\lambda u_2 & -(\lambda^2+1)u_4-\lambda u_1 & 0 & 0 \\
      u_4 & 0 & 0 & -u_1 & 0 & 0 & 0 \\
    \end{pmatrix} }
      \end{array}
    \]
    \label{tab3}
  \end{table}

\begin{table}
    \caption{Description of the geometries of poles  associated to forms of rank up to $6$.\hskip-1cm}
    \[\arraycolsep=1pt
      \begin{array}[t]{c|c|l}
        \text{Type\ } & \text{Poles} & \multicolumn{1}{c}{\text{Upper radical}}  \\ \hline
        T_1 & \PG(V) & |x,y|_{12}=0, |x,y|_{13}=0, |x,y|_{23}=0.  \\[2pt]
        T_2 & \PG(V) & \begin{array}[t]{l}
                         |x,y|_{12}=0, |x,y|_{13}=0, |x,y|_{15}=0, \\
                         |x,y|_{14}=0, |x,y|_{23}+|x,y|_{45}=0.
                       \end{array} \\[2pt]
        T_3 & \PG(V) & \begin{array}[t]{l}
                         |x,y|_{12}=0, |x,y|_{13}=0, |x,y|_{23}=0, \\
                         |x,y|_{45}=0, |x,y|_{46}=0, |x,y|_{56}=0.
                  \end{array} \\[2pt]
        T_4 & \PG(V) & \begin{array}[t]{l}
                    |x,y|_{26}-|x,y|_{35}=0, |x,y|_{16}-|x,y|_{34}=0 \\
                    |x,y|_{24}-|x,y|_{15}=0, |x,y|_{23}=0, \\
                    |x,y|_{13}=0, |x,y|_{12}=0.
                  \end{array} \\[2pt]
        T_{10,\lambda}^{(1)} & \PG(V) & \begin{array}[t]{l}
                                          |x,y|_{23}+\lambda|x,y|_{56}=0, |x,y|_{26}-|x,y|_{35}=0 \\
                                          |x,y|_{13}+\lambda|x,y|_{46}=0, |x,y|_{16}-|x,y|_{34}=0 \\
                                          |x,y|_{12}+\lambda|x,y|_{45}=0, |x,y|_{15}-|x,y|_{24}=0
                                          \end{array} \\[2pt]
        T_{10,\lambda}^{(2)} & \PG(V) & \begin{array}[t]{l}
                                          |x,y|_{26}-|x,y|_{35}+\lambda|x,y|_{56}=0, \\ -|x,y|_{16}+|x,y|_{34}-\lambda|x,y|_{46}=0, \\
                                          |x,y|_{15}-|x,y|_{24}+\lambda|x,y|_{45}=0, \\
                                          |x,y|_{23}+\lambda|x,y|_{26}-\lambda|x,y|_{35}+(\lambda^2+1)|x,y|_{56}=0,\\
                                          -|x,y|_{13}-\lambda|x,y|_{16}+\lambda|x,y|_{34}-(1+\lambda^2)|x,y|_{46}=0,\\
                                          |x,y|_{12}+\lambda|x,y|_{15}-\lambda|x,y|_{24}+(1+\lambda^2)|x,y|_{45}=0.
                                          \end{array}
      \end{array}
    \]
    \label{tab_n6}
\end{table}

\begin{table}[ht]
    \caption{Description of the geometry of poles associated to forms of rank $7$.}
    \[\arraycolsep=1pt
      \begin{array}[t]{c|c|l}
        \text{Type\ } & \text{Poles} & \multicolumn{1}{c}{\text{Upper radical}}  \\ \hline
        T_5 & {x}_1{x}_4=0 & \begin{array}[t]{l}
                           |x,y|_{23}+|x,y|_{47}=0, |x,y|_{13}=0, |x,y|_{12}=0, \\
                           |x,y|_{56}-|x,y|_{17}=0, |x,y|_{14}=0, |x,y|_{45}=0, \\
                           |x,y|_{46}=0
                         \end{array} \\[2pt]
        T_6 & {x}_1^2=0 & \begin{array}[t]{l}
                          |x,y|_{25}+|x,y|_{36}+|x,y|_{47}=0, |x,y|_{14}=0, \\ |x,y|_{15}-|x,y|_{34}=0,
                          |x,y|_{16}+|x,y|_{24}=0, \\ |x,y|_{17}-|x,y|_{23}=0,
                          |x,y|_{12}=0, |x,y|_{13}=0.
                        \end{array} \\[2pt]
        T_7 & {x}_5{x}_7+{x}_4{x}_6=0 &
                                \begin{array}[t]{l}
                                  |x,y|_{46}+|x,y|_{57}=0, |x,y|_{45}=0, |x,y|_{67}=0, \\
                                  |x,y|_{16}+|x,y|_{25}=0, |x,y|_{24}-|x,y|_{17}=0, \\
                                  |x,y|_{14}-|x,y|_{37}=0, |x,y|_{15}+|x,y|_{36}=0.
                                \end{array} \\[2pt]
        T_8 & {x}_1^2=0 & \begin{array}[t]{l}
                          |x,y|_{23}+|x,y|_{45}+|x,y|_{67}=0, |x,y|_{13}=0, \\ |x,y|_{12}=0,
                          |x,y|_{14}=0, |x,y|_{15}=0, \\ |x,y|_{16}=0, |x,y|_{17}=0.
                        \end{array} \\[2pt]
        T_9 & {x}_7^2-{x}_3{x}_6-{x}_2{x}_5-{x}_1{x}_4=0 & \begin{array}[t]{l}
                                               |x,y|_{23}+|x,y|_{47}=0, |x,y|_{57}-|x,y|_{13}=0 \\
                                               |x,y|_{12}+|x,y|_{67}=0, |x,y|_{56}-|x,y|_{17}=0 \\
                                               |x,y|_{27}+|x,y|_{46}=0, |x,y|_{45}-|x,y|_{37}=0 \\
                                               |x,y|_{14}+|x,y|_{25}+|x,y|_{36}=0.
                                               \end{array} \\[2pt]
        T_{11,\lambda}^{(1)} & \lambda {x}_4^2-{x}_1^2=0 & \begin{array}[t]{l}
                                          |x,y|_{13}+\lambda|x,y|_{46}=0, |x,y|_{12}+\lambda|x,y|_{45}=0 \\
                                          |x,y|_{23}+|x,y|_{47}+\lambda|x,y|_{56}=0,  |x,y|_{14}=0, \\
                                          -|x,y|_{17}+\lambda(|x,y|_{26}-|x,y|_{35})=0, \\
                                          |x,y|_{15}-|x,y|_{24}=0,  |x,y|_{16}-|x,y|_{34}=0. \\
                                          \end{array} \\[2pt]
        T_{11,\lambda}^{(2)} &{x}_4^2+\lambda {x}_1{x}_4+{x}_1^2=0 & \begin{array}[t]{l}
                                                               |x,y|_{26}-|x,y|_{35}+|x,y|_{47}+\lambda |x,y|_{56}=0, \\
                                                               |x,y|_{16}-|x,y|_{34}+\lambda|x,y|_{46}=0, |x,y|_{14}=0, \\
                                                               |x,y|_{15}-|x,y|_{24}+\lambda|x,y|_{45}=0, \\
                                                               |x,y|_{17}-|x,y|_{23}-\lambda(|x,y|_{26}-|x,y|_{35})-\\
                         \multicolumn{1}{r}{-(\lambda^2+1)|x,y|_{56}=0,} \\
|x,y|_{13}+\lambda(|x,y|_{16}-|x,y|_{34})+\\
                         \multicolumn{1}{r}{+(\lambda^2+1)|x,y|_{46}=0,} \\
|x,y|_{12}+\lambda|x,y|_{15}-\lambda|x,y|_{24}+\\
                         \multicolumn{1}{r}{+(\lambda^2+1)|x,y|_{45}=0.}
                                                               \end{array}
      \end{array} \]
    \label{tab_n7}
  \end{table}

\end{document}